\newdimen\epsfxsize
\newdimen\epsfysize
\newdimen\epsfxsize
\newdimen\epsfysize
\newcommand{\be}{\begin{equation}}
\newcommand{\ee}{\end{equation}}
\newcommand{\bes}{\begin{equation*}}
\newcommand{\ees}{\end{equation*}}
\newcommand{\s}{\sigma}
\renewcommand{\l}{\lambda}
\renewcommand{\t}{\tau}
\newcommand{\g}{\gamma}
\newcommand{\G}{\Gamma}
\newcommand{\C}{\mathbb C}
\renewcommand{\H}{\mathbb H}
\newcommand{\DD}{\mathbb D}
\newcommand{\R}{\mathbb R}
\newcommand{\dist}{\mathrm{dist}}
\newcommand{\diam}{\mathrm{diam\,}}
\renewcommand{\Im}{\operatorname{Im}}
\newcommand{\hcap}{\operatorname{hcap}}
\newcommand{\Ca}{\text{Cap}}
\newtheorem{thm}{Theorem}[section]
\newtheorem{prop}[thm]{Proposition}
\newtheorem{remark}[thm]{Remark}
\newtheorem{defn}[thm]{Definition}
\newtheorem{cor}[thm]{Corollary}
\newtheorem{lemma}[thm]{Lemma}
\def\eps{\varepsilon}
\def\1{{\bf 1}}
\begin{document}

%\baselineskip=1.2\baselineskip
\begin{doublespace}

\title{\bf Collisions and Spirals of Loewner Traces}
\bigskip
\author{{\bf Joan Lind, } {\bf Donald E. Marshall}\footnote{Research
supported in part by NSF Grant DMS-0602509} ~{\bf and}  {\bf Steffen
Rohde}\footnote{Research supported in part by NSF Grants DMS-0501726 and DMS-0800968.}
}

\maketitle

\abstract{We analyze Loewner traces driven by functions asymptotic to
$\kappa\sqrt{1-t}$.
We prove a stability result when $\kappa\neq4$ and show that $\kappa=4$ can lead to 
non locally connected hulls. As a consequence, we obtain a driving
term $\lambda(t)$ so that 
the hulls driven by $\kappa \lambda(t)$ are generated by a continuous curve 
for all $\kappa>0$ with $\kappa\neq 4$ but not when $\kappa=4,$
so that the space of driving terms with continuous traces is not convex.
As a byproduct, we obtain an explicit construction of
the traces driven by $\kappa\sqrt{1-t}$ and a conceptual proof of the corresponding results of 
Kager, Nienhuis and Kadanoff.}

\tableofcontents

\bigskip

\section{Introduction and Results}\label{s0}

 \bigskip

Let $\lambda(t)$ be continuous and real valued and let
$g_t:\H\setminus K_t\to\H$ be the solution to the Loewner equation
\be\label{ODE}
\frac{d}{dt} g_t(z) = \frac{2}{g_t(z)-\l(t)}\quad , \quad g_0(z)=z\in
\H,
\ee
where $\H$ is the upper half-plane.
It was shown in \cite{MR1} and \cite{L} that if $\l$ is H\"older continuous
with exponent $1/2$ and if $||\l||_{1/2}<4$, then 
there is a simple curve $\gamma$ with
$\gamma[0,t]=K_t$ and $\gamma\setminus\gamma(0)\subset \H.$ The norm $4$ is sharp as the
examples $\l(t)=\kappa\sqrt{1-t}$ show: Indeed, by \cite{KNK}, $\gamma$ touches back on the real line
if $\kappa\geq4$ 
(hence the driving term $\l(t)=\kappa$ for $0\leq t\leq t_0$ and
$\l(t)=\kappa\sqrt{t_0+1-t}$
for $t_0\leq t\leq t_0+1$ has a self-intersection in $\H$ for $t_0$
sufficiently large). It was also shown in 
\cite{MR1} that there
is a $\l$ with $||\l||_{1/2}<\infty$ such that $K_1$ spirals infinitely often
around some disc, and hence
is not locally connected. The starting point of this paper is the observation that from the conformal
mapping point of view, the zero angle cusp at the tangential self-intersection for  
$\l(t)=4\sqrt{1-t}$ is very similar to the infinitely spiraling prime end, and that this is reflected
in the driving terms: 

\begin{thm}\label{t:spiral} If $\gamma$ is a sufficiently smooth
infinite spiral of half-plane capacity $T$, or if $\gamma$ has a tangential self-intersection, then 
its driving term $\l$ satisfies
$$ \lim_{t \rightarrow T} \frac{|\lambda(T)-\lambda(t)|}{\sqrt{T-t}}=4.$$
\end{thm}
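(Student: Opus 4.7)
The plan is to zoom into the tip of $\gamma$ at time $T$ by a conformal rescaling and identify the resulting limit shape with the self-similar Kager-Nienhuis-Kadanoff traces driven by $\pm 4\sqrt{1-s}$. Concretely, define for $t<T$ the rescaled driving term
\[
\mu_t(s) = \frac{\lambda(t+(T-t)s)-\lambda(t)}{\sqrt{T-t}}, \qquad s\in[0,1],
\]
so that the theorem is equivalent to showing $|\mu_t(1)|\to 4$ as $t\to T$. By the standard Loewner scaling, $\mu_t$ is itself a Loewner driving term on $[0,1]$, whose associated hull is the rescaled image $\bigl(g_t(K_{t+(T-t)s}\setminus K_t)-\lambda(t)\bigr)/\sqrt{T-t}$ and has half-plane capacity $2s$. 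Note that for the model family $\lambda(t)=c\sqrt{T-t}$ one computes $\mu_t(s)=c(\sqrt{1-s}-1)$, independently of $t$, so these are the fixed points of the rescaling procedure.

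I would then proceed in two steps. First, establish precompactness of $\{\mu_t\}_{t<T}$, say in the topology of uniform convergence on $[0,1-\delta]$ for each $\delta>0$. The key input is that the rescaled hull has hcap $2$ and inherits the degenerate prime end structure of $\gamma$ at $T$: in case (b) it still closes off a Jordan region with a zero-angle cusp, while in case (a) the smoothness hypothesis transfers an infinite spiral at the tip to the rescaled hull as well. Second, identify any subsequential limit $\mu_*$. Carath\'eodory convergence of the hulls yields a limit trace with a degenerate prime end at $s=1$. Because the rescaling procedure is self-similar—applied to $\mu_*$ it gives $\mu_*$ again—$\mu_*$ must be of the form $\mu_*(s)=c(\sqrt{1-s}-1)$ for some $c\in\R$. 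The Kager-Nienhuis-Kadanoff classification then forces $|c|=4$: for $|c|<4$ the limiting trace is a simple arc (no self-intersection and no spiral at the tip), and for $|c|>4$ the trace hits the real line transversally (nonzero angle, contradicting the degeneracy of the prime end). Hence $|\mu_t(1)|\to|c|=4$.

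The main obstacle is the precompactness/identification step, especially treating both cases uniformly. Case (b) is geometrically rigid: the rescaled hulls converge naturally to a fixed self-similar cusp, and the argument essentially reduces to inverting the KNK classification. Case (a) is more delicate, because the rescaled hulls rotate as $t\to T$—the spiral winds infinitely often—so a priori the driving term $\mu_t$ could oscillate. One must either pass to subsequences along which the winding angle stabilizes modulo a rigid rotation, or exploit the rotational invariance of the Loewner driving term (a rotation of the hull around its base point on $\R$ does not change the real-valued $\mu_t$) to argue that $\mu_t$ is insensitive to this rotation. This is presumably where the \emph{sufficiently smooth} hypothesis enters, providing enough regularity on the spiral's approach to its center so that after rescaling both the shape and the driving term converge.
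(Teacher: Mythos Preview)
Your overall strategy—renormalize, pass to a limit, identify the limit as a KNK fixed point—is the right idea and is close in spirit to the paper's approach. But several steps are genuinely incomplete or incorrect.

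\textbf{Self-similarity of subsequential limits fails.} You argue that any subsequential limit $\mu_*$ must be a fixed point of the renormalization. But if $\mu_{t_n}\to\mu_*$ along some sequence $t_n\to T$, then renormalizing $\mu_{t_n}$ at level $s_0\in(0,1)$ produces (after translation) $\mu_{t_n'}$ with $t_n'=t_n+s_0(T-t_n)$, which is a \emph{different} sequence. There is no reason $\mu_{t_n'}$ converges to the same $\mu_*$, so you cannot conclude $\mu_*$ is self-similar. The paper avoids this by proving \emph{actual} convergence of the renormalizations (not just precompactness): it shows geometrically that the rescaled domains $\H\setminus\gamma[0,t]$, suitably normalized, converge in the Carath\'eodory sense to the model slit half-plane $D_0$ (Lemmas~\ref{l:caratheodory} and~\ref{l:close to identity}), and then uses the quantitative estimates of Section~\ref{s:nearby} to transfer this to convergence of driving terms.

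\textbf{Close hulls do not imply close driving terms.} Your precompactness step assumes that Carath\'eodory convergence of rescaled hulls yields convergence (or precompactness) of $\mu_t$. The paper's Figure~\ref{notclose2} is a counterexample to this implication in general; this is precisely why Theorem~\ref{nearby} (with its hyperbolic-distance hypothesis) is needed, and why the smoothness assumption on the spiral enters.

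\textbf{The endpoint conclusion is not free.} Even granting $\mu_t\to c(\sqrt{1-s}-1)$ uniformly on each $[0,1-\delta]$, you cannot simply evaluate at $s=1$ to get $\mu_t(1)\to -c$. Bridging this gap is exactly the content of Proposition~\ref{p:similar2}, which uses a telescoping sum over dyadic scales to first show $\lambda(1)=\lim\lambda(t)$ exists and then extract the asymptotic.

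\textbf{A false claim.} The assertion that ``a rotation of the hull around its base point on $\R$ does not change the real-valued $\mu_t$'' is incorrect: rotations do not preserve $\H$, and the chordal Loewner driving term is not rotation-invariant. The only automorphisms of $\H$ fixing $\infty$ are $z\mapsto az+b$ with $a>0$, $b\in\R$. The rotation of the spiral as $t\to T$ is a real phenomenon that must be handled, and the paper does so by working in the model domain $D_0$ where the self-map is a translation, not a rotation.
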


\begin{figure}[h]
\centering
\centerline{\includegraphics[height=1.65in,angle=180]{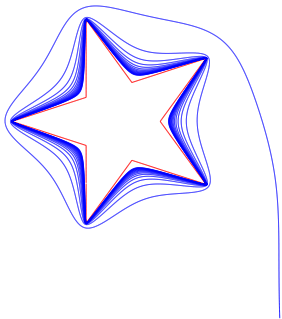}}
\caption{An infinite spiral converging towards a star.
}\label{star}
\end{figure}

\noindent See Sections \ref{s:definitions} and \ref{s:spiral} for the definitions and precise statements.
In Section \ref{s:spiral} we show that for every compact connected set 
$A\subset\H$ with connected complement, there is a
sufficiently smooth infinite spiral winding infinitely often around $A$ with 
limit set $\partial A$; see Figure \ref{star}. 
The following natural question has been asked by Omer Angel: If the hull of $\l$ is generated
by a continuous curve $\gamma$ and if $r<1$, is it true that the hull of $r\l$ is generated by a continuous
curve, too? In other words, is the space of driving terms of continuous curves starlike?
We answer this question in the negative by proving

\begin{thm}\label{t:continuity} If $\gamma$ is a sufficiently smooth infinite
spiral of half-plane capacity $T$, and if 
$\l$ is its driving term, then the trace of $r\l$ is continuous on the
closed interval $[0,T]$ for all $r\neq\pm1.$
\end{thm}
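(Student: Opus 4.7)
The strategy is to combine Theorem~\ref{t:spiral} with a stability result for Loewner driving terms asymptotic to $\kappa\sqrt{T-t}$ in the off-critical regime $\kappa\neq 4$. Since $\gamma$ is smooth away from its limit set, the driving term $\lambda$ is smooth on every compact subinterval of $[0,T)$; in particular the trace of $r\lambda$ is automatically continuous (even smooth) on $[0,T-\epsilon]$ for every $\epsilon>0$, so the only issue is continuity at the endpoint $t=T$.

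By Theorem~\ref{t:spiral},
$$\lambda(t) \;=\; \lambda(T) \,\pm\, 4\sqrt{T-t} \,+\, o\!\bigl(\sqrt{T-t}\bigr) \qquad \text{as } t \to T.$$
After a translation we may assume $\lambda(T)=0$, and the sign is immaterial since a reflection $\lambda\mapsto-\lambda$ only reflects the trace across the imaginary axis. Multiplying by $r$,
$$r\lambda(t) \;=\; \pm 4r\sqrt{T-t} \,+\, o\!\bigl(\sqrt{T-t}\bigr),$$
so $r\lambda$ is asymptotic at $T$ to the exactly solvable driving term $\mu_\kappa(t)=\kappa\sqrt{T-t}$ with $\kappa=\pm 4r$.

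The behavior of $\mu_\kappa$ is known (both through \cite{KNK} and via the explicit construction announced in the abstract): for $|\kappa|<4$ the trace is a simple curve converging to a single point in $\H$; for $|\kappa|=4$ it ends in a tangential self-contact; for $|\kappa|>4$ it ends in a transversal self-contact with the real line. In the off-critical cases $|\kappa|\neq 4$ the trace extends continuously to $[0,T]$. I would then invoke the stability theorem of the paper: a driving term whose asymptotic behavior at $T$ matches $\mu_\kappa$ with $\kappa\neq\pm 4$ generates a trace whose endpoint behavior at $T$ matches that of $\mu_\kappa$, and in particular extends continuously to $[0,T]$. With $\kappa=\pm 4r$, the off-critical condition $\kappa\neq\pm 4$ becomes exactly $r\neq\pm 1$, yielding the theorem.

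The main obstacle is the stability result. Proving that an asymptotically $\mu_\kappa$-driven Loewner chain inherits the qualitative endpoint behavior of $\mu_\kappa$ requires tracking $g_t(z)-r\lambda(t)$ as $t\to T$, where the Loewner ODE becomes singular. For $|\kappa|<4$ one must show $\gamma(t)$ converges to a single point in $\H$ rather than accumulating on a set (as happens in the critical spiral), and for $|\kappa|>4$ one must show the trace meets the existing hull at a single transversal point. A natural approach is to conjugate the Loewner equation by the explicit solution for $\mu_\kappa$ (available in closed form via a Möbius-type change of variable) and estimate the perturbation; the technical heart is uniform control of this perturbation as $t\to T$.
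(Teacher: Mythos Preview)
Your overall architecture matches the paper's: reduce to the endpoint, use Theorem~\ref{t:spiral} to get
$\lim_{t\to T}(\lambda(T)-\lambda(t))/\sqrt{T-t}=4$, multiply by $r$ to land in the regime $|\kappa|=4|r|\neq4$, and then invoke the stability theorems (Theorem~\ref{t:collision} for $|r|>1$, Theorem~\ref{t:sp} for $|r|<1$). That is exactly how the paper proves Theorem~\ref{t:continuity}.

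There is one genuine gap. The stability Theorems~\ref{t:collide} and~\ref{t:sp} do \emph{not} take as sole hypothesis the asymptotic relation $\lambda(t)\sim\kappa\sqrt{T-t}$; they also require that $\lambda$ have local Lip~$1/2$ norm less than some $C<4$ in the sense of Definition~\ref{d:smallnorm}, namely
\[
|\lambda(t)-\lambda(t')|\le C|t-t'|^{1/2}\quad\text{whenever }|t-t'|<\delta(1-t).
\]
This is a condition on the behaviour as $t\to T$ (the allowed window shrinks like $1-t$), and it is \emph{not} implied by the asymptotic from Theorem~\ref{t:spiral}: an $o(\sqrt{T-t})$ error can still oscillate badly at that scale. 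Your sentence ``$\lambda$ is smooth on every compact subinterval of $[0,T)$'' gives nothing here, because any modulus of continuity on $[0,T-\epsilon]$ blows up as $\epsilon\to0$. The paper closes this gap with a separate argument, Proposition~\ref{p:sufficient}, which shows that the specific spirals $\nu^A$ have \emph{arbitrarily small} local Lip~$1/2$ norm; the proof uses that the renormalized arcs $\nu_t[0,\delta]$ are $K(\delta)$-quasislits with $K(\delta)\to1$, together with the quasislit characterization from \cite{MR2}. Once this is in hand, $r\lambda$ inherits arbitrarily small local Lip~$1/2$ norm for every fixed $r$, and the stability theorems apply. You should flag this regularity verification as a necessary step, not absorb it into ``$\gamma$ smooth''.

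A minor remark: your separation into ``continuous on $[0,T-\epsilon]$'' plus ``continuous at $T$'' is fine, but in the paper it is unnecessary, since Theorems~\ref{t:collide} and~\ref{t:sp} already conclude that the full trace on $[0,T]$ is a Jordan arc once both hypotheses hold.
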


\noindent
The main work is in proving a form of stability of the (nontangential)
self-intersection of $\l(t)=\kappa\sqrt{1-t}$
for $\kappa>4:$

\begin{thm}\label{t:collision} If $\l:[0,T]\to\R$ is sufficiently regular on $[0,T)$ and
if 
$$ \lim_{t \rightarrow T}
\frac{|\lambda(T)-\lambda(t)|}{\sqrt{T-t}}=\kappa>4,$$
then
$$\gamma(T) = \lim_{t\to T} \gamma(t)$$
exists, is real and $\gamma$ intersects $\R$ in the same angle as the
trace for $\kappa\sqrt{1-t}$.
\end{thm}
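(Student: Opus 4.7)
The strategy is to perform a self-similar rescaling that turns the Loewner equation near the critical time into a perturbation of an autonomous ODE, analyze the phase portrait of the model, and carry the conclusions back through a stability argument. After an affine normalization assume $\lambda(T) = 0$, and write $\lambda(t) = \kappa\sqrt{T-t} + \mu(t)$ with $\mu(t) = o(\sqrt{T-t})$ and (using ``sufficient regularity'') $\dot\mu(t)\sqrt{T-t} \to 0$ as $t\to T$. In the centered coordinate $h_t(z) = g_t(z) - \lambda(t)$ the Loewner equation reads
$$
\partial_t h_t = \frac{2}{h_t} + \frac{\kappa}{2\sqrt{T-t}} - \dot\mu(t).
$$
Introducing the logarithmic time $\tau = -\log(T-t)$ and the rescaled variable $H_\tau(z) = h_t(z)/\sqrt{T-t}$, a direct computation gives
$$
\partial_\tau H_\tau = \frac{H_\tau}{2} + \frac{\kappa}{2} + \frac{2}{H_\tau} + E(\tau),
$$
where $E(\tau) = -\dot\mu(t)\sqrt{T-t}$ tends to $0$. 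Setting $E\equiv 0$ recovers exactly the Loewner equation in self-similar variables for the driving term $\kappa\sqrt{T-t}$.

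\textbf{Autonomous phase portrait.} The holomorphic vector field $F(H) = H/2 + \kappa/2 + 2/H$ has zeros at the two roots $x_\pm = (-\kappa\pm\sqrt{\kappa^2-16})/2$ of $H^2 + \kappa H + 4$, which for $\kappa > 4$ are distinct and negative. From $F'(H) = 1/2 - 2/H^2$ one verifies $|x_+| < 2 < |x_-|$, hence $F'(x_+) < 0$ and $F'(x_-) > 0$; so $x_+$ is a hyperbolic attractor and $x_-$ a hyperbolic repeller of the holomorphic flow on $\overline{\H}$. Translated back to the original coordinates, the two fixed points label the two sides of the hull meeting at the tip $\gamma(T) = 0$: a boundary point $z$ with $g_t(z)-\lambda(t) \sim x_{\pm}\sqrt{T-t}$ lies on one of two tangent rays at $\gamma(T)$, whose opening angle is exactly the one computed by Kager--Nienhuis--Kadanoff for $\kappa\sqrt{1-t}$.

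\textbf{Perturbation and principal obstacle.} Since $E(\tau) \to 0$, standard results on asymptotically autonomous systems (or a direct Lyapunov estimate exploiting $F'(x_+) < 0$) show that every trajectory of the non-autonomous equation which enters a sufficiently small neighborhood of $x_+$ converges to $x_+$ at the same exponential rate and along the same tangent direction as in the autonomous model; the analogous statement at $x_-$ follows by time reversal. Transferring this information back through $h_t$ identifies the two sides of the hull at the tip with the same asymptotic directions as in the self-similar case, forcing $\gamma(t) \to 0 \in \R$ and fixing the angle of intersection. The main technical difficulty is that the tip itself sits at the singularity $H = 0$ of the rescaled vector field, so the stability theorems cannot be applied to a single initial condition. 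One must instead realize the tip trajectory as a suitable limit of trajectories of regular points in $\H$ approaching $\gamma(T)$, and prove that this limit lies uniformly in the basin of attraction of $x_+$ (respectively $x_-$) for the perturbed flow for all large $\tau$. These uniform estimates rely essentially on the hyperbolicity $F'(x_\pm) \neq 0$, which fails precisely at $\kappa = 4$; this is what distinguishes Theorem~\ref{t:collision} from the borderline behavior captured by Theorem~\ref{t:spiral}.
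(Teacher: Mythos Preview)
Your self-similar rescaling and the asymptotically autonomous equation for $H_\tau$ are precisely the paper's time change of Section~\ref{s:timechange} (your $H_\tau$ is $G_s-\sigma(s)$, and your $x_+,x_-$ are the paper's $A,B$ translated by $-\kappa$), and your phase portrait is correct. The gap is in the passage from phase portrait to trace. The remedy you propose --- realizing the tip as a limit of trajectories of ``regular points in $\H$ approaching $\gamma(T)$'' --- cannot work as written: for any $z\in\H$ not swallowed before time $T$ the trajectory $H_\tau(z)$ remains in $\H$ (the real axis is invariant under the holomorphic flow away from $0$) and therefore never converges to the real fixed points $x_\pm$. On the real line the picture is also not the one you describe: the attractor $x_+$ has as basin the interval swallowed at time $T$, while the repeller $x_-$ corresponds to the single point $\gamma(T)$ itself (in the model, $x_-\leftrightarrow B=\gamma^\kappa(1)$ and $x_+\leftrightarrow A$, a point strictly inside the base of the hull, not a second tangent direction at $\gamma(T)$). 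In particular, you do not explain how the intersection angle $\pi(1-\theta)$ is to be extracted from the eigenvalues $F'(x_\pm)$; in the paper it comes instead from the boundary behaviour of the model conformal map $k$ at its simple pole $B$.

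The paper closes this gap differently. The forward ODE is used only once, in Lemma~\ref{interval}, to show that the slit $I_s=G_s(\Gamma[0,s])$ stays in $(A-\delta,\infty)$, so that the \emph{inverse} maps $F_s=G_s^{-1}$ extend conformally to a fixed disc about $B$. The trace is then handled by writing $\Gamma$ on $[nu,(n+1)u]$ as $F_{(n-1)u}$ applied to the renormalized piece $\Gamma_{(n-1)u,(n+1)u}$, and supplying two ingredients absent from your sketch: (i) the renormalized pieces converge uniformly to the model trace $\Gamma^\kappa$ near $B$, by the trace-stability Theorem~\ref{t:uniform} --- this is where the paper's ``sufficient regularity'' (local Lip-$\tfrac12$ norm $<4$, Definition~\ref{d:smallnorm}) is actually used, rather than the differentiability hypothesis $\dot\mu\sqrt{T-t}\to0$ you impose; and (ii) Koebe distortion for the univalent maps $F_{(n-1)u}$ on the common disc about $B$ transports the limiting direction $\pi(1-\theta)$ of $\Gamma^\kappa$ to that of $\Gamma$ (Lemma~\ref{l:cones}). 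A nested-cones argument then yields the limit point on $\R$ and the angle. The essential missing ideas are thus a \emph{trace-level} convergence theorem and a conformal distortion estimate for the backward maps; forward-ODE hyperbolicity alone does not substitute for either.
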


\vskip.2in\noindent
See Section \ref{s:collisions} for the statement of the necessary regularity.
A similar result is true for $\kappa<4$, see Theorem \ref{t:sp} in Section
\ref{s:collisions}. By Theorems \ref{t:collision} and \ref{t:sp},
the proof of Theorem \ref{t:continuity} is reduced to proving sufficient regularity
of the driving term of sufficiently smooth spirals. This is carried out in 
Proposition \ref{p:sufficient}.

As mentioned above, the solutions to the Loewner equation driven by
$\l(t)=\kappa\sqrt{1-t}$ were first computed
in \cite{KNK}. Their solutions are somewhat implicit and their analysis of the behaviour at the tip involved 
a little work. Our proof of Theorem \ref{t:collision} is based on the fact that 
the traces of $\l(t)=\kappa\sqrt{1-t}$ are fixed points of a certain renormalization 
operator, and that they take an extremely simple shape
(they are straight lines and logarithmic spirals)
after an appropriate change of coordinates. 
We therefore obtain an explicit ``geometric construction'' of the trace, which might be
of independent interest. See Sections \ref{s:scaling} and
\ref{s:self-similar}. 
We also need conditions and results about closeness of 
traces assuming closeness of driving terms, and vice versa.
These are stated and proved in Sections \ref{s:uniform} and \ref{s:nearby}.

\vskip.2in\noindent
{\bf Acknowledgement:} We would like to thank Byung-Geun Oh for our conversations about Theorem \ref{t:spiral}.

\section{Basics}\label{s:basics}

\subsection{Definitions and first properties}\label{s:definitions}
In this section, we will fix some notation and terminology,
as well as collect some standard properties. The expert can safely skip
this section.

A {\it hull} is a bounded set
$K\subset\H$ is such that $\H\setminus K$ is connected and simply connected.
If $g_K$ is a conformal map of
$\H\setminus K$ onto $\H$ such that $|g_K(z)|\to \infty$ as $z
\to \infty$, let $\widetilde
K= \overline{K\cup K^\R}\cup_j I_j$, where $K^\R$ is the
reflection of $K$ about $\R$  and $\{I_j\}$ are the bounded intervals
in $\R\setminus \overline{K\cup K^R}$. Then by the Schwarz reflection 
principle, $g_K$
extends to be a conformal map of $\C^*\setminus {\widetilde K}$ onto
$\C^*\setminus I$ where $\C^*$ is the extended plane and 
and $I$ is an interval contained in $\R$.
Composing with a linear map $az+b$, $a>0$, $b\in \R$, we may suppose
that $g_K$ has the hydrodynamic normalization
\be\label{normalization}
g_K(z)=z+\frac{2d}{z}+ {\text O}(\frac{1}{z^2})
\ee
near $\infty$. If $f(z)\equiv g_K^{-1}(z)=z-2d/z+\dots$ 
is continuous on $\overline\H$ then
\be\label{cauchy}
f(z)-z=\int_I\frac{\Im f(x)}{x-z}\frac{dx}{\pi},
\ee
by the Cauchy integral formula or by the Poisson
integral formula in $\H$ applied to the bounded harmonic function
$\Im(f(z)-z)$.
Note that (\ref{cauchy}) implies that
\be\label{2d}
2d=\lim_{z\to\infty} -z(f(z)-z)=\frac{1}{\pi}\int_I\Im f(x) dx > 0,
\ee
unless $f(z)\equiv z$.
The coefficient $d$ is called the half-plane capacity of $K$ and is
denoted by $d=\hcap(K)$.
It is easy to see
that  $\hcap$ is strictly increasing.

If $\l:[0,T]\to\R$ is continuous and $z\in\H$
then there are two cases for the solution $g_t(z)$ to the initial value
problem (Loewner equation)
\be
\frac{d}{dt} g_t(z) = \frac{2}{g_t(z)-\l(t)}\quad , \quad g_0(z)=z.
\ee 
Either there is a time $T_z\leq T$ such that $\liminf_{t\to T_z} |g_t(z)-\l(t)|=0$
(in this case it is not hard to show that  $\lim_{t\to T_z} |g_t(z)-\l(t)|=0$),
or $\inf_{t\in[0,T]}|g_t(z)-\l(t)|>0$. Set $T_z=\infty$ in the latter case.
If
$$K_t = \{z\in\H: T_z\leq t\},$$
then $\H\setminus K_t$ is simply connected, and $g_t:\H\setminus K_t\to\H$ is the (unique) conformal
map with $g_t(z) = z + 2t/z + O(1/z^2)$ near infinity. Thus each $K_t$ is a
hull and $\hcap(K_t)=t.$
We say that the hulls $K_t$ are {\it driven by $\lambda$} and that
$\lambda$ is the {\it driving term for $K_t$}.
We also say that $K_t$ is {\it generated by a curve $\gamma$} 
if there is a continuous function $\gamma:[0,T]\to\overline\H$
such that for each $t\in[0,T],$ the domain $\H\setminus K_t$ is 
the unbounded component of 
$\H\setminus\gamma[0,t].$ The curve $\gamma$ is called the {\it
trace} and we also say that $g$ and $\gamma$ are driven by $\l$ and
use the notation $g^\l$ and $\gamma^\l$ if necessary. 
It is known (see \cite{MR2}) that 
the hulls driven by a sufficiently regular $\l$ are simple (Jordan)
curves, but that there are continuous $\l$ whose hulls are not locally 
connected and hence not generated by a curve.

Consider a sequence of continuously growing hulls $K_t$ with
$K_0=\emptyset$
(see \cite{La} for a precise definition). Re-parametrizing $K_t$ if necessary, 
we may assume that $\hcap(K_t)=t$.
Then the hydrodynamically normalized conformal maps
$g_t\equiv g_{K_t}:\H\setminus K_t \to \H$ satisfy the Loewner equation for some continuous
function $\lambda(t)$ and $K_t$ are the hulls driven by $\l$.
If $g_t^{-1}$ has a continuous extension to $\l(t)$ 
then $g_t^{-1}(\l(t))$ is well-defined. If furthermore
$\g(t)=g_t^{-1}(\l(t))$ is a continuous curve, 
then $K_t=\text{fill}(\gamma[0,t]),$
where $\text{fill}(A)$ denotes the union of $A$ and the bounded components of
$\H\setminus A$, that is 
the complement of the unbounded component of $\H\setminus A$. 

The standard example is provided by 
a continuous curve $\gamma\in\overline\H$, beginning in $\R$ and 
without self-crossings but possibly self-touching, and 
$K_t=\text{fill}(\gamma[0,t])$,
In this case, $g_t(\gamma(t)) = \l(t).$
Notice that in general,
the trace $\gamma[0,t]$ is only a subset of the hull $K_t$, 
unless $\gamma$ is a simple curve.
For example, the hulls $K_t$ on the middle left of Figure \ref{gt} are equal to the 
trace $\gamma[0,t]$ for all $t<1$ (the $\kappa$ in the figure is a parameter), but $K_1$ equals 
$\gamma[0,1]$ together with the whole region enclosed by $\g.$

A crucial property is {\it scaling}: From
$$ g_{r K}( z) = r g_K(\frac{z}{r})$$
it follows that 
$$\hcap(r K) = r^2 \hcap(K),$$
and that scaled hulls $r K_t$ are driven by $\frac1r \l(r^2 t)$,
if $K$ is driven by $\l.$ Since the function $\l(t)=\kappa\sqrt{t}$ is invariant
under the scaling $\l \mapsto \frac1r \l(r^2 t)$, it follows that its hulls
are invariant under the geometric scaling $K\mapsto r K$. Notice that this
would immediately imply that the hulls are rays $K_r = a r^2 e^{i\theta}$ for 
some $a(K)>0$, if we assume that $K_r$ is generated by a simple curve.
This of course also can be done by a direct computation.

Other crucial simple properties are the behaviour under {\it
translation}
(because $g_{K+x}(z) = g_K(z-x)+x$,
the driving term of $\g+x$ is $\l+x$), under {\it concatenation}
(if $K_1$ and $K_2$ are hulls driven by $\l_1:[0,t_1]\to\R$ and $\l_2:[0,t_2]\to\R$
and if $\l_1(t_1)=\l_2(0),$ then $K_1*K_2=K_1\cup g_{K_1}^{-1}(K_2)$ is driven by
$\l(t)=\l_1(t) 1_{[0,t_1]} + \l_2(t-t_1) 1_{(t_1,t_1+t_2]}$), and
under {\it reflection} (if $R_I$ denotes reflection in the imaginary axis,
then $g_{R_I(K)}= R_I\circ g_K\circ R_I$ so that $R_I(K)$ is driven by $-\l$).
We will often use the following version of the above concatenation:
If $\g[0,t]$ is driven by $\l$, then $g_T(\g[T,t])$ is driven by
$\tau\mapsto\l(T+\tau)$, for $0\le \tau\le t-T$.

\subsection{Renormalization on [0,1)}\label{s:scaling}
Let $\l$ be continuous on $[0,1)$ and assume for ease of notation
that the associated hulls $K_t$ are generated by a curve $\g(t)$,
$0\le t < 1$. 
In order to understand the trace $\g$ (more generally the hulls $K$) near $t=1$,
we want to ``pull down'' the initial part $\gamma[0,T]$ of the curve by applying $g_T$,
and then rescale the result so as to have half-plane capacity 1 again.
For fixed $T\in[0,1)$, the curve $\widetilde{\g}_T = g_T(\g[T,1))$ that is
parametrized by
\begin{equation}\label{renormgt}
\widetilde{\g}_T(t) = g_T(\g(T+t)), \quad 0\leq t < 1-T
\end{equation}
is driven by 
\begin{equation}\label{renormlt}
\widetilde{\l}_T(t) = \l(T+t), \quad 0\leq t < 1-T.
\end{equation}
Since $\widetilde{\g}_T$ has capacity $1-T$, the scaled copy of
$\widetilde{\g}_T$
\begin{equation}\label{renormg}
\g_T(t) \equiv \widetilde{\g}_T\bigl(t (1-T)\bigr)/\sqrt{1-T}, \quad
0\leq t < 1
\end{equation}
has half-plane capacity 1.
By Section \ref{s:definitions}, $\g_T$ is the Loewner trace of
\begin{equation}\label{renorml}
\l_T(t) = \l\bigl(T+ t (1-T)\bigr)/\sqrt{1-T}, \quad 0\leq t < 1.
\end{equation}

\subsection{A time change}\label{s:timechange}

To facilitate our analysis of curves with driving term asymptotic to
$\kappa \sqrt{1-t}$,
we would like to reparametrize $\g$ in a way that is well adapted to the
renormalization operation \eqref{renormg}. 
Let $\g$ be a curve paramatrized by half-plane capacity $t\in[0,1].$
If $\g(T)$ and $\g(t)$ are consecutive points 
($0\leq T<t\leq 1$), then the renormalization of the arc between $\g(T)$ and
$\g(t)$ has half-plane capacity 
$(t-T)/(1-T)$. In other words
\bes
\frac{g_T(\gamma(t))}{\sqrt{1-T}}=
\gamma_T\Bigl(\frac{t-T}{1-T}\Bigl).
\ees
A parametrization $s(t)$ leaves ``time-differences''
invariant under renormalization provided
$$s(t)-s(T)= s\Bigl(\frac{t-T}{1-T}\Bigl)-s(0).$$
Dividing by $t-T$, passing to the limit $T\to t$ and integrating
(after setting $s(0)=0$ and $s'(0)=1$), 
we therefore define
\begin{equation}\label{e:timechange}
s=s(t) = \log{\frac1{1-t}},\quad \text{or} \quad t = 1-e^{-s},
\end{equation}
where $0\le s < \infty$. Set
\begin{equation}\label{e:defgs}
G_s(z) =  \frac{g_t(z)}{\sqrt{1-t}},\quad 
F_s = G_s^{-1},\quad
\s(s) = \frac{\l(t)}{\sqrt{1-t}},\quad \text{ and }\quad
\Gamma(s)=\gamma(t)
\end{equation}
so that
\bes 
G_s(\G(s)) = \s(s)\quad {\text and} \quad 
F_s(\s(s))=\G(s)\ees
We will say that $\G$, $G$ and $F$ are {\it driven by} $\s$ and write
$\G^{\s},G^{\s}$ and $F^{\s}$ if neccessary. 
By (\ref{e:defgs}) and (\ref{ODE})
\be\label{LDE-G}
\dot G_s\equiv\frac{\partial}{\partial s} G_s=
 \frac2{G_s-\s(s)} + \frac{G_s}{2}
\ee
for all $z\in \H\setminus \G[0,s],$ and
\begin{equation}\label{LDE-F}
\frac{\dot F_s}{F_s'} = \frac{2}{\s-z} - \frac{z}{2}
\end{equation}
for all $z\in\H.$ 
This change of variables was used in
\cite{KNK} when $\lambda(t)=\kappa\sqrt{1-t}$, in which case
$\sigma(s)\equiv \kappa$.

\bigskip\noindent
{\bf Convention:} Throughout the remainder of the paper, the symbol $s$ will refer to the 
``time change'' defined by \eqref{e:timechange}, whereas $t$ 
will always stand for the parametrization
by half-plane capacity.

\bigskip\noindent
We will now express the scaling relation \eqref{renorml} in terms of $s$ and establish
the semigroup property of $G_s$. The simple form may be the main advantage of the time change.
Denote the shift of a function $\s$ on $[0,\infty)$ by $\s_u,$
$$\s_u (s) = \s(u+s) \quad {\text for}\quad s\geq0.$$
To simplify the notation, we set $\Gamma_{u,v}=G_u(\Gamma[u,v])$ and
$\Gamma_u=\Gamma_{u,\infty}$.
Then $\Gamma_{u,v}$ is the ``pull-back'' of the portion of 
$\Gamma$ between ``s-times'' $u$ and $v$, with initial point 
$\sigma(u)=\l(t(u))\in \R$.

\begin{lemma}\label{l:semigroup} If the curve $\G$ is driven by $\s,$ then
the curve $\Gamma_u$ is driven by
$\s_u.$ Moreover,
$$G_{u+s}^{\s} = G_s^{\s_u} \circ G_u^{\s} \ \ .$$
\end{lemma}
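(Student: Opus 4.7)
The plan is to reduce both assertions to the standard capacity-time semigroup property, namely $g^\lambda_{T+\tau}=g^{\widetilde\lambda_T}_\tau\circ g^\lambda_T$ with $\widetilde\lambda_T(\tau)=\lambda(T+\tau)$, combined with the scaling relation from Section~\ref{s:definitions}. These ingredients are essentially already recorded in \eqref{renormgt}--\eqref{renorml}.

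Fix $u\ge 0$ and set $T=t(u)=1-e^{-u}$, so that $\sqrt{1-T}=e^{-u/2}$ and $G^\sigma_u(z)=g^\lambda_T(z)/\sqrt{1-T}$. A short computation gives $t(u+s)-T=(1-T)t(s)$, hence
\[ G^\sigma_u\bigl(\Gamma[u,u+s]\bigr)=\frac{1}{\sqrt{1-T}}\,g^\lambda_T\bigl(\gamma[T,t(u+s)]\bigr) \]
has half-plane capacity $(1-T)t(s)/(1-T)=t(s)$. In particular the map $s\mapsto G^\sigma_u(\Gamma(u+s))$ is the $s$-time parametrization of $\Gamma_u$ in the sense of \eqref{e:timechange}, so the statement ``driven by $\sigma_u$'' is well posed.

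To identify the driving term I first apply the capacity-time concatenation: $g^\lambda_T(\gamma[T,\cdot])$, parametrized by its own capacity $\tilde\tau$, is driven by $\widetilde\lambda_T(\tilde\tau)=\lambda(T+\tilde\tau)$. The scaling relation with factor $r=1/\sqrt{1-T}$ then shows that $G^\sigma_u(\Gamma[u,\cdot])$, parametrized by its capacity $\tau$, is driven by $\mu(\tau)=\lambda(T+(1-T)\tau)/\sqrt{1-T}$. Converting to $s$-time via \eqref{e:defgs} and using $(1-T)(1-\tau)=1-t(u+s)$ when $\tau=t(s)$, I obtain
\[ \frac{\mu(t(s))}{\sqrt{1-t(s)}}=\frac{\lambda(t(u+s))}{\sqrt{1-t(u+s)}}=\sigma(u+s)=\sigma_u(s), \]
which is the first claim.

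For the operator identity $G^\sigma_{u+s}=G^{\sigma_u}_s\circ G^\sigma_u$ the cleanest route is uniqueness for the ODE \eqref{LDE-G}. Fix $z$. Both sides, viewed as functions of $s$, satisfy $\dot F=2/(F-\sigma(u+s))+F/2$ with initial value $G^\sigma_u(z)$ at $s=0$: the left side by differentiating $G^\sigma_\tau(z)$ at $\tau=u+s$ via \eqref{LDE-G}, and the right side because $G^\sigma_u(z)$ is constant in $s$ and $G^{\sigma_u}_s$ satisfies \eqref{LDE-G} with driving term $\sigma_u$. They therefore coincide on the common domain. The argument as a whole is essentially bookkeeping; the only step that requires care is checking that the capacity of $G^\sigma_u(\Gamma[u,u+s])$ equals $t(s)$, which is what reconciles the natural parametrization of $\Gamma_u$ via $G^\sigma_u\circ\Gamma(u+\cdot)$ with the time change \eqref{e:timechange}.
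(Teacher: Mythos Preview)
Your argument is correct and, for the first assertion, follows the same route as the paper: identify $G_u^\sigma(\Gamma[u,\infty))$ with the renormalized curve $\gamma_T$ of \eqref{renormg}, read off its driving term $\lambda_T$ from \eqref{renorml}, and then pass to $s$-time to obtain $\sigma_u$. Your capacity check $t(u+s)-T=(1-T)t(s)$ is exactly the bookkeeping hidden in the paper's references to \eqref{renormgt}--\eqref{renorml}.

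For the semigroup identity the paper takes a slightly different shortcut: having established that $\Gamma_u$ is driven by $\sigma_u$, both $G_{u+s}^\sigma$ and $G_s^{\sigma_u}\circ G_u^\sigma$ are conformal maps of $\H\setminus\Gamma[0,u+s]$ onto $\H$ with the same expansion $e^{(u+s)/2}z+O(1/z)$ at infinity, hence coincide. Your ODE-uniqueness argument via \eqref{LDE-G} is an equally valid alternative; it trades the normalization check at infinity for checking that both sides solve the same initial-value problem. Neither approach is deeper than the other.
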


\begin{proof}
Fix $u$ and set $\tau=1-e^{-u}.$ By \eqref{renormgt}, 
\eqref{renormg}, and \eqref{e:defgs},
$G_u^{\s}(\G[u,\infty])$ is the curve $\g_\tau$. By \eqref{renorml}
$\g_\tau$ 
is driven by $\l_\tau.$ Writing $t = 1-e^{-s},$ we have
$$\frac{\l_\tau(t)}{\sqrt{1-t}} =
\frac{\l(1-e^{-u}+(1-e^{-s})e^{-u}))}{e^{-u/2}e^{-s/2}} = 
\frac{\l(1- e^{-(u+s)})}{e^{-(u+s)/2}}=\s_u(s)$$
and hence $G_u^{\s}(\G[u,\infty])$ is driven by $\s_u$.
The semigroup property follows because both maps $G_{u+s}^{\s}$ and
$G_s^{\s_u} \circ G_u^{\s}$ are
normalized conformal maps of the same domains, hence identical.
\end{proof}

\subsection{Table of Notation and Terminology}

\bigskip

\begin{tabular}{c|c}
\text{Notation} & \text{Brief definition} \\ \hline
hull & bounded subset of $\H$ with simply connected complement in
$\H$\\
$g_K$ & normalized conformal map $\H\setminus K$ onto $\H$\\
$\lambda(t)$ & Loewner driving term \\
$K_t$ & Loewner hull \\
$\gamma$ & trace\\
$g_t\equiv g_{K_t}$ & Loewner map from $\mathbb{H} \setminus K_t$ to  $\mathbb{H}$ \\
$f_t$ & $g_t^{-1}$  \\
$\widetilde{\gamma}_T$ & $g_T(\gamma[T, 1])$\\ 
$\gamma_T$ & $ g_T(\gamma[T,1])/\sqrt{1-T}$ \\
$\lambda_T(t)$ &  $ \lambda(T+t(1-T))/\sqrt{1-T}$ \\
$s$ & $-\ln(1-t)$, and so  $t=t(s)=1-e^{-s}$ \\
$\Gamma(s)$ & $\gamma(t(s))$\\
$G_s$ & $e^{s/2}g_{1-e^{-s}}(z) = g_{t(s)}(z)/\sqrt{1-t(s)}$ \\
$F_s$ &  $G_s^{-1}$ \\
$\sigma(s)$ & $e^{s/2} \lambda(1-e^{-s})=\lambda(t(s))/\sqrt{1-t(s)}$ \\
$g_t^\l,\g^\l,G_s^\sigma,\Gamma^\sigma$ & $\l$ and $\sigma$ are the
corresponding driving terms\\
$\g^\kappa,\Gamma^\kappa$ & traces with driving terms $\l(t)=\kappa\sqrt{1-t}$ and
$\sigma(s)\equiv \kappa$, resp.\\
$\Gamma_{u,v}$ & $G_u(\Gamma[u,v])$ \\
$\Gamma_u$ & $\Gamma_{u,\infty}$\\
$B^\R$ & reflection of $B$ about $\R$\\
%$\widetilde B$ & $\overline{B\cup B^\R}\cup_j I_j$ where $\{I_j\}$ are
%the bounded intervals in $\R\setminus  \overline{B\cup B^\R}$\\
\end{tabular}

\bigskip

\section{Self-similar curves}\label{s:self-similar}

We now describe the driving terms of curves $\gamma$ for which
$\widetilde \g_T$ and $\g$ are similar for each $T$.
Here we call two subsets $A,B\subset\H$ {\it similar} 
if they differ only by a dilation and translation fixing $\H$. 
We say that $\gamma$ is {\it self-similar} if  $\widetilde\g_T$ is similar to $\g$ for every $0 < T < 1$.

We then give an explicit construction of such curves. 

\begin{prop}\label{p:similar} The curve $\g$ is self-similar if
and only if $\l(t)=C + \kappa\sqrt{1-t}$ for some constants $C$ and
$\kappa$. Moreover, in this case, 
the renormalized curves $\g_T$ 
satisfy
\bes
\g_T-\g_T(0)=\g-\g(0)
\ees
for each $0<T<1$,
and the fixpoints of the map
$\g \mapsto \g_T$ are precisely the Loewner traces of $\l(t) = \kappa\sqrt{1-t}.$
\end{prop}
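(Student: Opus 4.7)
The plan is to translate self-similarity into a functional equation for $\l$, solve it using the time change of Section \ref{s:timechange}, and then read off the fixpoints. The only tools needed beyond bookkeeping are the capacity scaling $\hcap(aA+b)=a^{2}\hcap(A)$ and the translation rule ``$\g+x$ has driving term $\l+x$'' from Section \ref{s:definitions}.

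\textbf{From similarity to a real shift of $\g_T$.} Assuming $\widetilde\g_T[0,1-T]=a_T\g[0,1]+b_T$ with $a_T>0$ and $b_T\in\R$, capacity scaling forces $a_T^{2}=\hcap(\widetilde\g_T)=1-T$. Because both curves are parametrized by half-plane capacity from a real initial point, matching initial points yields the pointwise identity $\widetilde\g_T(\tau)=\sqrt{1-T}\,\g(\tau/(1-T))+b_T$. Substituting this into \eqref{renormg} gives $\g_T(t)=\g(t)+c_T$ with $c_T:=b_T/\sqrt{1-T}\in\R$, which already establishes the ``moreover'' clause $\g_T-\g_T(0)=\g-\g(0)$.

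\textbf{Functional equation and its solution.} By the translation rule, $\g+c_T$ has driving term $\l+c_T$; by uniqueness this equals $\l_T$ from \eqref{renorml}. Setting $t=0$ identifies $c_T=\l(T)/\sqrt{1-T}-\l(0)$ and produces
$$\l(T+t(1-T))-\l(T)=\sqrt{1-T}\,[\l(t)-\l(0)].$$
Passing to the $s$-variable and writing $\phi(s)=\s(s)-\s(0)$, a short computation turns this into $\phi(u+s)=\phi(s)+e^{s/2}\phi(u)$. Exchanging $u$ and $s$ yields $\phi(s)(1-e^{u/2})=\phi(u)(1-e^{s/2})$, so $\phi(s)/(1-e^{s/2})$ is a constant $A$, giving $\s(s)=(\s(0)+A)-Ae^{s/2}$ and, after unwinding the change of variables, $\l(t)=C+\kappa\sqrt{1-t}$ for some real constants $C,\kappa$.

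\textbf{Converse, fixpoints, and main obstacle.} The converse is a direct check: for $\l(t)=C+\kappa\sqrt{1-t}$, both sides of the displayed identity evaluate to $\kappa\sqrt{1-T}(\sqrt{1-t}-1)$, so reversing the previous step gives similarity of $\widetilde\g_T$ and $\g$. From $c_T=C\bigl(1/\sqrt{1-T}-1\bigr)$, the map $\g\mapsto\g_T$ fixes $\g$ for all $T\in(0,1)$ iff $C=0$, giving the asserted fixpoint family $\l(t)=\kappa\sqrt{1-t}$. The only non-routine step is solving the functional equation; the time change of Section \ref{s:timechange} was designed exactly so that renormalization becomes a shift, and after the change of variables the equation collapses by a one-line symmetry argument in $u$ and $s$.
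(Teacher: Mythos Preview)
Your proof is correct and follows the same overall arc as the paper: deduce $a_T=\sqrt{1-T}$ from capacity, turn self-similarity into a functional equation for $\l$, solve it, and then read off the converse and the fixpoints.

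The only substantive difference is in how the functional equation is solved. The paper simply evaluates the identity $\l(T+t)=\sqrt{1-T}\,\l(t/(1-T))+b(T)$ at $t=1-T$ (equivalently, your equation at $t=1$) and immediately obtains $\l(T)=\l(1)+(\l(0)-\l(1))\sqrt{1-T}$. You instead pass to the time change of Section~\ref{s:timechange}, where the equation becomes additive in the shift, and solve it by the symmetry trick $\phi(u+s)=\phi(s)+e^{s/2}\phi(u)=\phi(u)+e^{u/2}\phi(s)$. Your route is a little longer but arguably cleaner: it does not presuppose that $\l$ extends continuously to $t=1$ (an assumption the paper's ``set $t=1-T$'' step uses without comment), and it illustrates exactly the point made in Section~\ref{s:timechange} that renormalization becomes a shift after the change of variables. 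The paper's route is quicker once one grants (or first argues) that $\l(1^-)$ exists.
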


\begin{proof} Suppose $\widetilde \g_T= a(T) \g + b(T)$. Then by (\ref{renormlt}) and Section
\ref{s:definitions}, 
$\widetilde \g_T$ is driven by
\be\label{2drivers}
\l(T+t)=a\l(\frac{t}{a^2}) +b 
\ee
for $0 < t < 1-T$ and $0 < t < a^2$. Since these intervals must be the same, $a=\sqrt{1-T}$.
Setting $t=0$ in (\ref{2drivers}) we obtain $b=\l(T)-\l(0)\sqrt{1-T},$ and setting $t=1-T$ we obtain 
\bes
\l(T)=\l(1)+(\l(0)-\l(1))\sqrt{1-T}
\ees
as desired.
Conversely, if
$\l(t) = C + \kappa\sqrt{1-t}$, then $\l_T(t)= C/\sqrt{1-T}+\kappa\sqrt{1-t}$ by \eqref{renorml}
and so $\g_T$ is a translate of $\g$ for each $T$. 
Moreover $\g=\g_T$ if and only if $\lambda=\lambda_T$ if and only if $C=0$.
\end{proof}

Next we will construct curves $\gamma$ which are invariant under 
renormalization up to translation, hence obtaining the traces of 
$\kappa\sqrt{1-t}$ for some values of $\kappa.$ 
This approach has the advantage of being conceptual
and simple, but the disadvantage that it does not yield $\kappa.$ 
Each construction will be followed by an
explicit computation of the associated conformal maps, which then determines the associated constant
$\kappa$.

\subsection{Collisions}\label{ss:collisions} Fix $\theta$ with $0 < \theta < 1$. Let $D_\theta=\mathbb{H}\setminus
S_\theta$ where
$S_\theta$ is the line segment in
$\mathbb{H}$ from $0$ to $e^{i\pi \theta}$. See the upper right
corner of  Figure \ref{gt}. 
\begin{figure}[h]
\vskip 0.3truein
\centering
\psfrag{A}{$A$}
\psfrag{B}{$B$}
\psfrag{K}{$\kappa$}
\psfrag{zor}{$z/r$}
\psfrag{g}{$\gamma$}
\psfrag{reit}{$re^{i\pi\theta}$}
\psfrag{gt}{$g_t$}
\psfrag{gat}{$\gamma(t)$}
\psfrag{stz}{$\sqrt{1-t}~z$}
\psfrag{poma}{${}^{\pi(1-\theta)}$}
\psfrag{0p}{${}^{0^+}$}
\psfrag{0m}{${}^{0^-}$}
\psfrag{gt}{$g_t$}
\psfrag{At}{${}^{A\sqrt{1-t}}$}
\psfrag{Bt}{${}^{B\sqrt{1-t}}$}
\psfrag{Kt}{${}^{\kappa\sqrt{1-t}}$}
\psfrag{Dt}{$D_\theta$}
\psfrag{Rt}{$R_\theta$}
\psfrag{St}{$S_\theta$}
\psfrag{eit}{$e^{i\theta}$}
\psfrag{0}{$0$}
\psfrag{k}{$k$}
\psfrag{phi}{$G$}
\centerline{\includegraphics[height=4.25in]{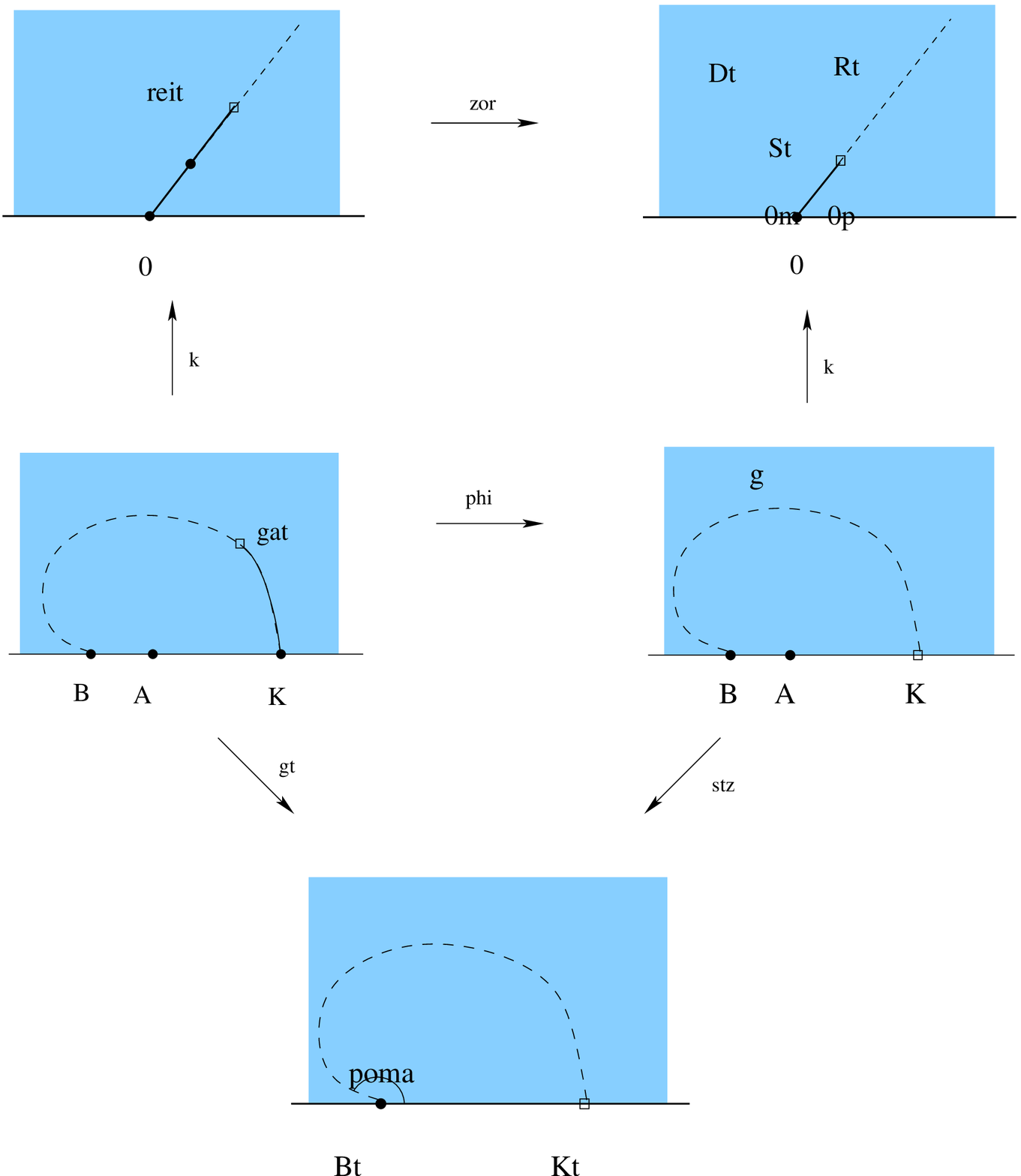}}
\caption{$\kappa > 4$:} 
\centerline{Loewner flow $z\mapsto z/r$ on the slit half-plane,}
\centerline{time changed 
Loewner flow $G$ on $\H$,} 
\centerline{and Loewner flow $g_t$ on $\H$.
}\label{gt}
\vskip 0.3truein
\end{figure}
Let $R_\theta$ be the ray $\{e^{i\pi \theta} r: r\geq1\}$ joining $e^{i\pi \theta}$ and $\infty$ in
$D_\theta.$ 
Viewed as a ``chordal Loewner trace'' in $D_\theta$ from $e^{i\pi \theta}$ to  $\infty$, 
$R_\theta$ has the
following similarity property: Parametrizing $R_\theta$ by $R_\theta(r)= e^{i\pi \theta} r, $ the conformal
map $z\mapsto z/r$ maps $D_\theta\setminus R_\theta[1,r]$ onto $D_\theta$ and maps
$R_\theta[r,\infty)$ onto $R_\theta.$
If we transplant the map $z\mapsto z/r$ to $\H$ by conjugating with a conformal map $k$ of
$\H$ onto $D_\theta$ then we will obtain 
a self-similar (in the sense of Proposition \ref{p:similar})
curve $\g=k^{-1}(R_\theta)$ provided $\infty$ is fixed. In other words, $k(\infty)$ 
must be fixed by the
map $z\mapsto z/r$. If $k(\infty)=\infty$ then $\g$ will be unbounded, and hence
have infinite half-plane capacity. 
The only other choices for the image of $\infty$ are
the two prime ends (boundary points) $0^+$ and $0^-$ of $D_\theta$ at $0.$
Choose $k$ so that $k(\infty)=0^+.$ 
Parametrize $\g=k^{-1}(R_\theta)$ by half-plane capacity, hcap, so that
$\g(0)=k^{-1}(e^{i\theta})$ and $\g(1)=k^{-1}(\infty)$ 
(we may replace $k(z)$ by $k(cz)$ for some constant $c>0$ so that
$\hcap(\g)=1$). Suppose $\gamma$ is driven by $\lambda$.
If $r>1$ is defined by 
$r e^{i\pi \theta}=k(\g(T)),$ then $G(z) = k^{-1}(\frac1r k(z))$ is a conformal map from 
$\H\setminus \g[0,T]$ to $\H$ fixing $\infty$  and hence must equal
$a(T)g_T+b(T)$ for some real constants $a$ and $b$. Since $G(\g[T,1]) = \g,$ 
$\widetilde \g_T$ is similar to $\g$ by (\ref{renormgt}).  Proposition
\ref{p:similar} then guarantees
$\l(t) = C + \kappa\sqrt{1-t}$. There is still one free (real) parameter in the definition of $k$, so
we may assume that $k^{-1}(e^{i\theta})=\kappa$ and thus
$\g(0)=\l(0)=\kappa$ and 
$\l(t) = \kappa\sqrt{1-t}$.
Notice that $\g$ ``collides'' with $\R$ at $\g(1)=k^{-1}(\infty)$ forming an angle of $\pi (1-\theta)$ 
with the half line $[k^{-1}(\infty),+\infty)$.

To compute the relation between $\theta$ and $\kappa$, we will compute the corresponding conformal maps
explicitly. 
The maps $k$ are the fundamental building blocks for the numerical conformal
mapping method called ``zipper'' \cite{MR2}.
By the Schwarz reflection principle or by Caratheodory's theorem, $k$
satisfies
\bes
\arg k(x)=
\begin{cases}
\pi \theta& {\rm for \ \ } A < x\\
\pi & {\rm for \ \ } B < x < A\\
0 & {\rm for \ \ } x < B.
\end{cases}
\ees
where $k(A)={0^-}$ and $k(B)=\infty$.
By Lindel\"of's maximum principle \cite[page 2]{GM},
\bes
\arg k(z)= \pi \theta + (1-\theta) \arg(z-A) - \arg(z-B) 
\ees
and so 
\be\label{kdef}
k(z)=ce^{i\pi \theta} \frac{(z-A)^{1-\theta}}{z-B}
\ee
where $c$ is a positive constant chosen so that the length of
$S_\theta$ will be equal to $1$. In fact for any choice $B<A$ and appropriate $c$, 
the right side of (\ref{kdef})
will be a one-to-one analytic map of $\H$ onto $D_\theta$ because it is the composition of $k$ with
a linear map.
Set
\be\label{gdef1}
G(z)=G_s(z)=k^{-1}\Bigl(\frac{1}{r} k(z)\Bigl),
\ee
where $r=r(s)$ will be determined shortly. Then
\bes
\dot G =  -{\frac{\dot r}{r^2}} \frac{k}{k'\circ G} = -\frac{\dot
r}{r}\frac{k}{k'}\circ G.
\ees
Computing $k'/k$ from (\ref{kdef}) and simplifying we obtain
\bes
\dot{G}=\frac{\dot
r}{r}\frac{(G-A)(G-B)}{(\theta G+(1-\theta)B-A)}.
\ees
Set $\dot{r}/r=\theta/2$, $AB=4$ and
$A+B=(A-(1-\theta)B)/\theta$.
Then
(\ref{LDE-G}) holds with constant $\sigma\equiv A+B$, and hence
(\ref{ODE}) holds with 
\bes
g_t(z)={\sqrt{1-t}}~{G_{s(t)}}\ees
and $\l(t)=(A+B)\sqrt{1-t}$. We can now compute the relation between
$\kappa=A+B$
and $\theta$:
If $A>0$ then since $AB=4$ and $A+B=(A-(1-\theta)B)/\theta$,
\be\label{ABdef} A=\frac{2}{\sqrt{1-\theta}}\quad\text{ and}\quad
B=2\sqrt{1-\theta}
\ee
and
\be\label{Ktheta}
\kappa=A+B=2\sqrt{1-\theta}+\frac{2}{\sqrt{1-\theta}}.
\ee
We also deduce that $r(s)=e^{s\theta/2}=(1-t)^{-\theta/2}$,
 and $c=2^{\theta}(1-\theta)^{\theta/2-1}$ since $k(\kappa)=e^{i\pi\theta}$. 
The trace $\gamma$ is a curve beginning at $\kappa$ which
``collides'' with $\R$ at $B=2\sqrt{1-\theta}$ forming an 
angle of $\pi(1-\theta)$ with $[B,\infty)$. Note that the interval
$0<\theta<1$ corresponds to the interval $4< \kappa <\infty$.
To obtain the maps for $-\infty < \kappa < -4$, simply reflect the
construction above about the imaginary axis.

In summary, we conclude:

\begin{prop}\label{p:slit}
Given $\kappa > 4$, set $\theta=2(1+\kappa/\sqrt{\kappa^2-16})^{-1}$ and
\bes
k(z)=e^{i\pi\theta}\frac{(z-2/\sqrt{1-\theta})^{1-\theta}}{z-2\sqrt{1-\theta}}
\ees
and
\bes
g_t(z)=(1-t)^{\frac{1}{2}}\,k^{-1}\Bigl((1-t)^{\frac{\theta}{2}}k(z)\Bigl)
\ees
Then $k$ is a conformal map of $\H$ onto $\H\setminus S_\theta$ where 
$S_\theta$ is a line segment in $\H$ beginning at $0$ and forming an
angle $\pi \theta$ with $[0,\infty)$, and $g_t$ satisfies the
Loewner equation
\bes
\dot{g_t}=\frac{2}{g_t-\kappa\sqrt{1-t}},
\ees
with $g_0(z)\equiv z$. The trace 
$\gamma=k^{-1}(\{re^{i\pi\theta}: r > 0\}\setminus S_{\theta})$
is a curve in $\H$ 
which meets $\R$ at angle $\frac{\pi}{2}$ at $\gamma(0)=\kappa$ and 
at angle $\pi(1-\theta$) at $\gamma(1)=2\sqrt{1-\theta}$. The case
$\kappa<-4$ can be obtained from the case $\kappa>4$ by reflecting about the
imaginary axis.
\end{prop}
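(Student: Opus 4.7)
The plan is to verify the four claims of the proposition in sequence, essentially repackaging the construction carried out in Section~\ref{ss:collisions}. The proof divides into an algebraic verification of the $\kappa$--$\theta$ relationship, a conformal-mapping argument for $k$, a Loewner-ODE computation for $g_t$, and a geometric identification of the trace.

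First, I would verify the relation between $\kappa$ and $\theta$. Setting $u=\sqrt{1-\theta}$, the identity $\kappa=A+B=2/u+2u$ is the quadratic $u^2-(\kappa/2)u+1=0$, whose smaller root $u=(\kappa-\sqrt{\kappa^2-16})/4\in(0,1)$ (using $\kappa>4$) squares to the value of $1-\theta$ prescribed by the formula in the statement. Simultaneously this yields $A=2/\sqrt{1-\theta}$ and $B=2\sqrt{1-\theta}$ with $AB=4$, $A+B=\kappa$, and the auxiliary identity $A-(1-\theta)B=\theta(A+B)=\theta\kappa$ which will be crucial in the Loewner computation. The assertion that $k$ from \eqref{kdef} is a conformal map of $\H$ onto $\H\setminus S_\theta$ is a standard Schwarz-reflection / Lindel\"of argument: $\arg k$ is harmonic and bounded on $\H$ with constant boundary values $0,\pi,\pi\theta$ on the three intervals $(-\infty,B)$, $(B,A)$, $(A,\infty)$, so Lindel\"of's maximum principle forces $\arg k(z)=\pi\theta+(1-\theta)\arg(z-A)-\arg(z-B)$, which up to a positive multiplicative constant is exactly \eqref{kdef}. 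Choosing $c=2^\theta(1-\theta)^{\theta/2-1}$ normalizes the image slit to length $1$; equivalently, it forces $k(\kappa)=e^{i\pi\theta}$, as a direct substitution using $\kappa-A=B$ and $\kappa-B=A$ shows.

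The main content is the Loewner-ODE verification. I would work in time-changed variables, defining $G_s(z)=g_{t(s)}(z)/\sqrt{1-t(s)}=k^{-1}(e^{-s\theta/2}k(z))$ and showing that $G_s$ satisfies \eqref{LDE-G} with constant driving term $\sigma\equiv\kappa$; unwinding the time change then yields \eqref{ODE} with $\l(t)=\kappa\sqrt{1-t}$. Implicit differentiation of $e^{-s\theta/2}k(z)=k(G_s(z))$ in $s$ gives
\[
\dot G_s=-\frac{\theta}{2}\cdot\frac{k(G_s)}{k'(G_s)},
\]
and logarithmic differentiation of \eqref{kdef} gives $k(z)/k'(z)=(z-A)(z-B)/(A-(1-\theta)B-\theta z)$. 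Substituting and using $AB=4$, $A+B=\kappa$, and $A-(1-\theta)B=\theta\kappa$, the denominator collapses to $-\theta(G_s-\kappa)$ and the numerator to $G_s^2-\kappa G_s+4$, yielding
\[
\dot G_s=\frac{G_s^2-\kappa G_s+4}{2(G_s-\kappa)}=\frac{2}{G_s-\kappa}+\frac{G_s}{2},
\]
which is \eqref{LDE-G}. The main obstacle, and the most error-prone step, is this algebraic collapse: the conditions $AB=4$ and $A-(1-\theta)B=\theta(A+B)$ are \emph{exactly} what is needed to reduce both numerator and denominator to functions of $G_s-\kappa$, and any slip in signs from the logarithmic derivative of $k$ derails the computation.

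Finally, the trace description follows by tracking the conjugacy. Since $g_t=\sqrt{1-t}\,G_{s(t)}$ and $k\circ G_s=e^{-s\theta/2}k$, the preimage under $k$ of the ray $R_\theta=\{re^{i\pi\theta}:r\ge1\}$ parametrizes $\gamma$, so $\gamma(0)=k^{-1}(e^{i\pi\theta})=\kappa$ and $\gamma(1)=k^{-1}(\infty)=B=2\sqrt{1-\theta}$. For the angles, the short computation $k'(\kappa)\propto(1-\theta)A-B$ vanishes (using $B=(1-\theta)A$), so $\kappa$ is a critical point of $k$ with local quadratic behaviour $k(z)-e^{i\pi\theta}\sim\frac{1}{2}k''(\kappa)(z-\kappa)^2$; inverting this and using that $R_\theta$ leaves $e^{i\pi\theta}$ along the tangent direction $e^{i\pi\theta}$, one finds that $\gamma$ leaves $\kappa$ perpendicularly to $\R$, i.e., at angle $\pi/2$. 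At the other endpoint $B$ the map $k$ has a simple pole, preserving angles, and a local computation of the tangent of $R_\theta$ ``at infinity'' in the $k$-plane yields the angle $\pi(1-\theta)$ between $\gamma$ and $[B,\infty)$. The case $\kappa<-4$ follows from the reflection symmetry noted at the end of Section~\ref{s:definitions}.
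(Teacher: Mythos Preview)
Your proof is correct and follows essentially the same approach as the paper: the Lindel\"of argument identifying $k$, the derivation of \eqref{LDE-G} via $\dot G_s=-\tfrac\theta2\,k(G_s)/k'(G_s)$ and the algebraic identities $AB=4$, $A+B=\kappa$, $A-(1-\theta)B=\theta\kappa$, and the identification of the trace as $k^{-1}(R_\theta)$ are exactly what the paper does in Section~\ref{ss:collisions}. One small discrepancy: in the statement of the proposition the constant in \eqref{kdef} is taken to be $c=1$ rather than the value you give (the paper remarks immediately after the proposition that this choice is immaterial for $\gamma$ and $G$); and your angle arguments at $\kappa$ and $B$ are a bit telegraphic but correct.
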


In the statement of Proposition \ref{p:slit} we have replaced
$c$ (from (\ref{kdef})) by 1 for simplicity.
Indeed the definition of $\g$ and $G$ do not depend on the choice of
$c$. Changing $c$ only changes the size of the slit $S_\theta$.
Here the length of the slit is $|k(z_0)|$, where $z_0$ is the solution
of $k'(z_0)=0$.

\subsection{Spirals}\label{s:spirals}

Another type of region with a self-similarity property is a logarithmic spiral. 
Fix $\theta \in (0,\pi/2)$, set $\zeta=e^{i\theta}$  and consider the logarithmic spiral
\be\label{logspiral}
S_\theta(t) = e^{t\zeta} ,\quad -\infty\leq t\leq \infty.
\ee
Set $S^1=S_{\theta}[0,\infty)$, $D_\theta = \C\setminus S^1$ and let $R_\theta$ be the curve
$R_\theta(t)=S_\theta(-t), t\geq0.$ 
See  the upper right corner of Figure \ref{spiraldef}.
Viewed as a ``Loewner trace'' in $D_\theta$ from the boundary point $1$ of $D_\theta$ to the
interior point $0$,~ $R_\theta$ has the
following self-similarity property: 
$z\mapsto e^{t\zeta} z$ maps  
$D_\theta\setminus R_\theta[0,t]$ onto $D_\theta$ and
$R_\theta[t,\infty)$ onto $R_\theta.$
As before, it follows that any conformal map $k:\H \to D_\theta$ that fixes
$\infty$  sends $R_\theta$ to a curve $\g\subset\H$
driven by $\l(t) = a + \kappa\sqrt{\hcap(\g)-t}$. Notice that now the endpoint of 
$\g$ is an interior point of $\H,$ and because conformal maps are asymptotically
linear, we see that $\g$ is asymptotically similar to the logarithmic spiral
at the endpoint. 

To compute the relation between $\theta$ and $\kappa$, we will compute the corresponding conformal maps
explicitly as in the case $\kappa>4$. However, more work is required 
because Lindel\"of's maximum principle applies only to
bounded harmonic functions, which we do not have in this case. 
Let $\beta=k^{-1}(0)\in \H$
and let $\gamma_0=k^{-1}(-S_\theta)$. 
Then $\gamma$ is a Jordan arc in $\H$ from $k^{-1}(1)$ to $\beta$,
and $\gamma_0$ is an arc in $\H\setminus \gamma$ from $\beta$ to $\infty$.
See Figure \ref{spiraldef}.
\begin{figure}
\vskip 0.3truein
\centering
\psfrag{G}{$G$}
\psfrag{gt}{$g_t$}
\psfrag{tz}{$\sqrt{1-t}~z$}
\psfrag{zor}{$z\mapsto z/r$}
\psfrag{K}{$\kappa$}
\psfrag{Kt}{$\kappa\sqrt{1-t}$}
\psfrag{1}{$1$}
\psfrag{r}{$r$}
\psfrag{k}{$k$}
\psfrag{got}{$\gamma(t)$}
\psfrag{S1}{$S^1$}
\psfrag{mS}{$\color{blue}-S_{\theta}$}
\psfrag{SmS1}{$\color{red}R_{\theta}$}
\psfrag{log}{$\log z$}
\psfrag{exp}{$e^{z}$}
\psfrag{0}{$0$}
\psfrag{eit}{$e^{i\theta}$}
\psfrag{g}{\color{red}$\gamma$}
\psfrag{g0}{$\color{blue}\gamma_0$}
\psfrag{zmc}{$z-ce^{i\theta}$}
\psfrag{b}{$\beta$}
\psfrag{bt}{$\beta\sqrt{1-t}$}
\centerline{\includegraphics[width=4.25in]{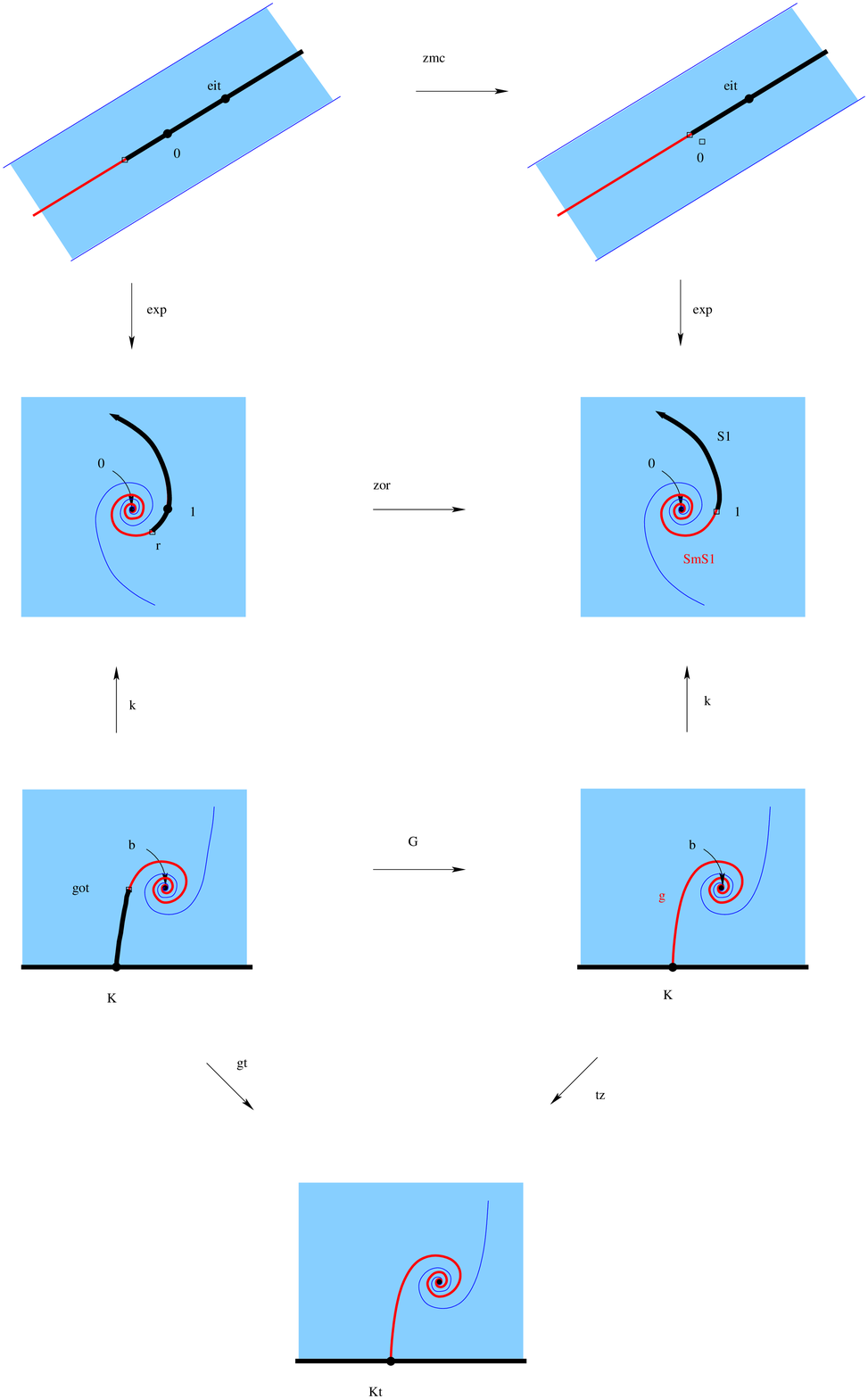}}
\caption{$0 < \kappa < 4$:} 
\centerline{Loewner flow $z\mapsto z/r$ on the complement of the
spiral,}
\centerline{time changed 
Loewner flow $G$ on $\H$,} 
\centerline{and Loewner flow $g_t$ on $\H$.
}\label{spiraldef}
\vskip 0.3truein
\end{figure}
We can define a single-valued branch of $\log k(z)$ in $\H\setminus\gamma_0$, 
with $\log k(k^{-1}(1))=0$,  so that for $z\in\R$ 
\begin{equation*}
\log k(z)\in e^{i\theta} \R^+  
\end{equation*}
and so that for $z\in \gamma=k^{-1}(R_\theta)$ 
\begin{equation*}
 \log k(z)\in -e^{i\theta}\R^+,  
\end{equation*}
where $\R^+=\{x>0\}$. 
Note that for $x \in \R$
\begin{equation*}
\overline{\log (x-{\beta})}=\log(x-{\overline{\beta}}) 
\end{equation*}
for continuous branches of the logarithms in $\overline{\H} \setminus
\gamma_0$, with $\lim_{x\to +\infty}\arg (x-\beta) =0$, and $\lim_{x\to
+\infty}\arg(x-\overline{\beta})=0$.
The function
\begin{equation*}
\log(z-{\beta})+e^{2i\theta}\log(z-{\overline{\beta}})=
e^{i\theta}[
e^{-i\theta}\log(z-{\beta})+e^{i\theta}\log(z-{\overline{\beta}})]
\end{equation*}
then maps $\R$ into the line with slope $\tan\theta$.
This suggests the following candidate for $k$:
\begin{equation}\label{k1def}
k_1(z)=(z-{\beta}){(z-{\overline{\beta}})^{e^{2i\theta}}},
\end{equation}
which is analytic in $\H$, with $k_1(\R)\subset S_\theta$, just like $k$.
Then
\begin{equation*}
\frac{k_1^\prime(x)}{k_1(x)}={e^{i\theta}}
\biggl(\frac{x(e^{i\theta}+e^{-i\theta})-
(\beta e^{i\theta}+\overline{\beta
e^{i\theta}})}{(x-\beta)(x-\overline{\beta})}\biggl),
\end{equation*}
which points in the direction $e^{i\theta}$ for $x>\kappa$ and in the direction
$-e^{i\theta}$ for $x< \kappa$, where $\kappa$ is the zero of $k_1'$. 
Thus as $x$ varies from $-\infty$ to
$+\infty$, ~$\log k_1$ traces a half line from $\infty$ to the tip
$\log k_1(\kappa)$
and then back again. 
Let $C$ be the boundary of a large half disk, given by the
line segment from $-R$ to $R$ followed by a semicircle in $\H$ 
from $R$ to $-R$. For large $|z|$, $k_1(z)$ is asympotic to
$k_2(z)=z^{1+e^{2i\theta}}$, and 
\begin{equation*}
\frac{\partial \arg k_2}{\partial \arg z} = 1+ \cos 2\theta > 0.
\end{equation*}
Thus $\arg k_2(z)$ increases as the semicircle is traced in the
positive sense
and the total change in $\arg k_2$ along the semi-circle
is $\pi(1+\cos 2\theta)$, which is at most $2\pi$. Thus as $z$ traces
the curve $C$, ~$k_1$
traces a subarc of $S_\theta$ from $k_1(-R)$ to $k_1(\kappa)$ and back  to
$k_1(R)$, followed by a curve, on which $|z|$ is large, from $k_1(R)$ to
$k_1(-R)$. 
By the argument
principle, $k_1$ is a conformal map of $\H$ onto $\C\setminus S^\kappa$
where $S^\kappa$ is the subarc of $S_\theta$ from $k_1(\kappa)$ to $\infty$. 

It follows directly from the definition of $S_\theta$ that if $\zeta_1,
\zeta_2\in S_\theta$ then $\zeta_1/\zeta_2\in S_\theta$, and so
\begin{equation*}
k(z)=\frac{k_1(z)}{k_1(\kappa)}.
\end{equation*}
is a conformal map of $\H$ onto $\C\setminus S^1$ such that
$|k(z)|\to \infty$ as $z\in \H \to \infty$.

Define
\begin{equation}\label{gdef2}
G(z)=G_s(z)= k^{-1}(\frac{1}{r}k(z))=k_1^{-1}(\frac{1}{r} k_1(z)),
\end{equation}
where  $r=r(s)$ will be determined shortly.
Then
\bes
\dot G =  -{\frac{\dot r}{r^2}} \frac{k_1}{k_1'\circ G} = -\frac{\dot
r}{r}\frac{k_1}{k_1'}\circ G.
\ees
Computing $k_1'/k_1$ from (\ref{k1def}) and simplifying we obtain
\bes
\dot{G}=-\frac{\dot r}{r}
\frac{(G-\beta)(G-\overline{\beta})}
{\Bigl((1+e^{2i\theta})G-(\overline{\beta}+\beta e^{2i\theta})\Bigl)}.
\ees
Set $\dot{r}/r=-(1+e^{2i\theta})/2$, with $r(0)=1$, $|\beta|=2$ and
$\beta+\overline{\beta}=(\overline{\beta}+\beta e^{2i\theta})/(1+e^{2i\theta})$.
Then
(\ref{LDE-G}) holds with constant $\sigma\equiv
\beta+\overline{\beta}=\kappa$, and hence
(\ref{ODE}) holds with 
\bes
g_t(z)={\sqrt{1-t}}~{G_{s(t)}}\ees
and $\l(t)=\kappa\sqrt{1-t}$. 
Note that $r(s)=e^{-s(\cos\theta)e^{i\theta}}\in R_\theta$ 
so that $z \mapsto rz$ maps  $S^1$ to
$S^1\cup R_\theta[0,t]$  for some $t>0.$
Thus for each $r>0$, 
$G$ is analytic on $\H\setminus\gamma[0,t]$ for some $t>0$ and 
maps $\H\setminus\gamma[0,t]$ onto $\H$.

We can now compute the relation between $\kappa$
and $\theta$: since $|\beta|=2$ and $(\overline{\beta}+\beta
e^{2i\theta})/(1+e^{2i\theta})=\beta+\overline{\beta}$, we conclude
\bes
\beta=2ie^{i\theta}
\ees
and
\bes
\kappa=-4\sin\theta.
\ees
The trace $\gamma$ is a curve beginning at $\kappa$ which
spirals around $\beta\in \H$.
Note that the interval
$0 < \theta <\frac{\pi}{2}$ corresponds to the interval
$-4< \kappa < 0$. To obtain $0 < \kappa < 4$, we just reflect the construction
about the imaginary axis; equivalently let $-\frac{\pi}{2}<\theta<0$.

In summary, we conclude:

\begin{prop}\label{p:spiral}
Given $0 < \kappa <  4$, set $\theta=-\sin^{-1}(\kappa/4)$,
$\beta=2ie^{i\theta}$ and
\bes
k(z)=\frac{(z-\beta)(z-\overline{\beta})^{e^{2i\theta}}}
{(\kappa-\beta)(\kappa-\overline{\beta})^{e^{2i\theta}}}
\ees
and
\bes
g_t(z)=(1-t)^{\frac{1}{2}}\,k^{-1}\Bigl((1-t)^{-\cos\theta e^{i\theta}}k(z)\Bigl).
\ees
Then $k$ is a conformal map of $\H$ onto $\C\setminus S^1$ where 
$S^1=\{e^{te^{i\theta}}: t\ge 0\}$ is a logarithmic spiral in $\C$ beginning at $1$
and tending to $\infty$
 and $g_t$ satisfies the
Loewner equation
\bes
\dot{g_t}=\frac{2}{g_t-\kappa\sqrt{1-t}},
\ees
with $g_0(z)\equiv z$. The trace 
$\gamma=k^{-1}(\{e^{-te^{i\theta}}: t > 0\})$
is a curve in $\H$ beginning at $\kappa\in \R$ and spiraling around
$\beta\in \H$. 
The case
$-4 < \kappa < 0$ can be obtained from the case $4>\kappa>0$ by reflecting about the
imaginary axis.
\end{prop}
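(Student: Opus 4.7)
The plan is to give an explicit conformal map $k$ onto the complement of the logarithmic spiral and then verify that the associated time-changed conjugation produces the advertised driving term. Because Lindel\"of's maximum principle no longer applies, a little more care is required than in the $\kappa>4$ case, but the strategy mirrors Proposition \ref{p:slit}.

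I would start from the ansatz $k_1(z)=(z-\beta)(z-\overline{\beta})^{e^{2i\theta}}$ obtained by replacing the real exponent $1-\theta$ from \eqref{kdef} with the complex exponent $e^{2i\theta}$ so that $k_1$ sends $\R$ into the line $\R e^{i\theta}$ through the origin. First I would define the appropriate branch of $\log k_1$ on $\H\setminus \gamma_0$, where $\gamma_0$ is the preimage of $-S_\theta$, and check the boundary behavior $\arg \log k_1(x)=\theta$ for $x\in\R$. Next, differentiating $\log k_1$ one finds a single real critical point $\kappa$ of $k_1$ where $k_1'(\kappa)=0$, and $k_1|_\R$ traces a half-line on $S_\theta$ out to $k_1(\kappa)$ and back. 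To conclude that $k_1$ is conformal from $\H$ onto $\C\setminus S^\kappa$, where $S^\kappa$ is the subarc of $S_\theta$ from $k_1(\kappa)$ to $\infty$, I would apply the argument principle to a large semicircle $C_R$: since $k_1(z)\sim z^{1+e^{2i\theta}}$ at infinity and $\partial\arg k_2/\partial\arg z =1+\cos 2\theta\in(0,2)$, the image of $C_R$ winds exactly once around every point of $\C\setminus S^\kappa$. Normalizing by $k(z)=k_1(z)/k_1(\kappa)$ yields the map of $\H$ onto $\C\setminus S^1$.

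Next I would define the time-changed Loewner flow $G_s(z)=k^{-1}(r^{-1}k(z))$ with $r=r(s)$ to be determined, and compute
\[
\dot G = -\frac{\dot r}{r}\,\frac{k_1}{k_1'}\circ G = -\frac{\dot r}{r}\cdot
\frac{(G-\beta)(G-\overline{\beta})}{(1+e^{2i\theta})G-(\overline{\beta}+\beta e^{2i\theta})}.
\]
Matching this with \eqref{LDE-G} for a \emph{constant} $\sigma$ forces the three algebraic conditions
\[
\dot r/r=-(1+e^{2i\theta})/2,\qquad |\beta|^2=\beta\overline{\beta}=4,\qquad \beta+\overline{\beta}=\frac{\overline{\beta}+\beta e^{2i\theta}}{1+e^{2i\theta}}.
\]
Solving these gives $\beta=2ie^{i\theta}$, $\sigma=\beta+\overline{\beta}=-4\sin\theta$, and $r(s)=e^{-s(\cos\theta)e^{i\theta}}$. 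Setting $\kappa=-4\sin\theta$ (so $\theta=-\sin^{-1}(\kappa/4)\in(-\pi/2,0)$ produces $\kappa\in(0,4)$ after reflection in the imaginary axis) and $g_t(z)=\sqrt{1-t}\,G_{s(t)}(z)$, relation \eqref{e:defgs} together with \eqref{LDE-G} implies $g_t$ solves \eqref{ODE} with $\lambda(t)=\kappa\sqrt{1-t}$, and $g_0(z)=z$ follows from $r(0)=1$.

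The geometric description of $\gamma$ then follows automatically from the self-similarity argument of Section \ref{ss:collisions}: since $R_\theta(t)=e^{-te^{i\theta}}$ is invariant under $z\mapsto z/r(s)$ when the tail $R_\theta[0,s\cos\theta]$ is removed, $\gamma=k^{-1}(R_\theta)$ is a Jordan arc starting at $k^{-1}(1)=\kappa\in\R$ and accumulating at the interior prime end $\beta=k^{-1}(0)$. The logarithmic spiral character of $R_\theta$ near $0$ and the conformality (hence asymptotic linearity) of $k^{-1}$ at $\beta$ produces the spiraling of $\gamma$ around $\beta$. I expect the main obstacle to be the injectivity/surjectivity verification for $k_1$, since unlike the slit case no bounded harmonic function is available; the argument principle computation on $C_R$ (and the check that $k_1$ continuously extends across $\R$ to foliate $\overline{\H}$ by level sets of $\arg k_1$) is the one place where one must work rather than simply invoke Schwarz reflection.
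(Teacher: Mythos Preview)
Your proposal is correct and follows essentially the same route as the paper: the same ansatz $k_1(z)=(z-\beta)(z-\overline\beta)^{e^{2i\theta}}$, the same argument-principle verification of conformality via the asymptotic $k_1(z)\sim z^{1+e^{2i\theta}}$ on a large half-disk, and the same matching of $\dot G=-(\dot r/r)(k_1/k_1')\circ G$ against \eqref{LDE-G} to force $|\beta|=2$, $\beta=2ie^{i\theta}$, and $\sigma\equiv -4\sin\theta$. Two small slips to fix in the write-up: it is $\log k_1$ (not $k_1$) that sends $\R$ into the line $\R e^{i\theta}$, and for $\theta=-\sin^{-1}(\kappa/4)\in(-\pi/2,0)$ you get $\kappa=-4\sin\theta\in(0,4)$ directly, with no reflection needed.
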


\subsection{Tangential intersection} \label{s:tangential}

Now let $D_0$ be the domain $\H\setminus \{x+\pi i: x\leq 0\}$ and let $R_0$ be
the halfline $\{x+\pi i: x\geq 0\}$, see Figure \ref{tangdef}.
Let $k:\H \to D_0$ be a conformal map 
normalized by $k(\infty)=p_{-\infty},$ where $p_{-\infty}$ is the prime end 
$\lim_{x\to-\infty} x+\pi i/2.$ Then $\g=k^{-1}(R_0)$  has the self-similarity property:  translation $z\mapsto z-r$
maps $D_0\setminus[\pi i,\pi i+r]$ onto
$D_0$ fixing $p_{-\infty}$. 
In this case $\g$ intersects $\R$ at $t=\hcap(\g)$ tangentially.

Next we compute the conformal maps explicitly to show this case
corresponds to $\kappa=4$, and $\kappa=-4$
corresponds to the reflection of $D_0$ about the imaginary axis. These cases can also be obtained
as limits of the collision case as $\theta \to 0$ or $\theta\to 1$, or as limits of the spiral case
as $\theta\to -\frac{\pi}{2}$ or as $\theta\to \frac{\pi}{2}$.

\begin{figure}[h]
\vskip 0.3truein
\centering
\psfrag{G}{$G$}
\psfrag{k}{$k$}
\psfrag{g}{$\gamma$}
\psfrag{2}{$2$}
\psfrag{2t}{${}^{2\sqrt{1-t}}$}
\psfrag{4}{$4$}
\psfrag{4t}{${}^{4\sqrt{1-t}}$}
\psfrag{gt}{$g_t$}
\psfrag{sqrtz}{$\sqrt{1-t}~z$}
\psfrag{D0}{$D_0$}
\psfrag{0}{$0$}
\psfrag{pi}{$\pi i$}
\psfrag{pinf}{$p_{-\infty}$}
\psfrag{zpc}{$z-r$}
\centerline{\includegraphics[width=4.25in]{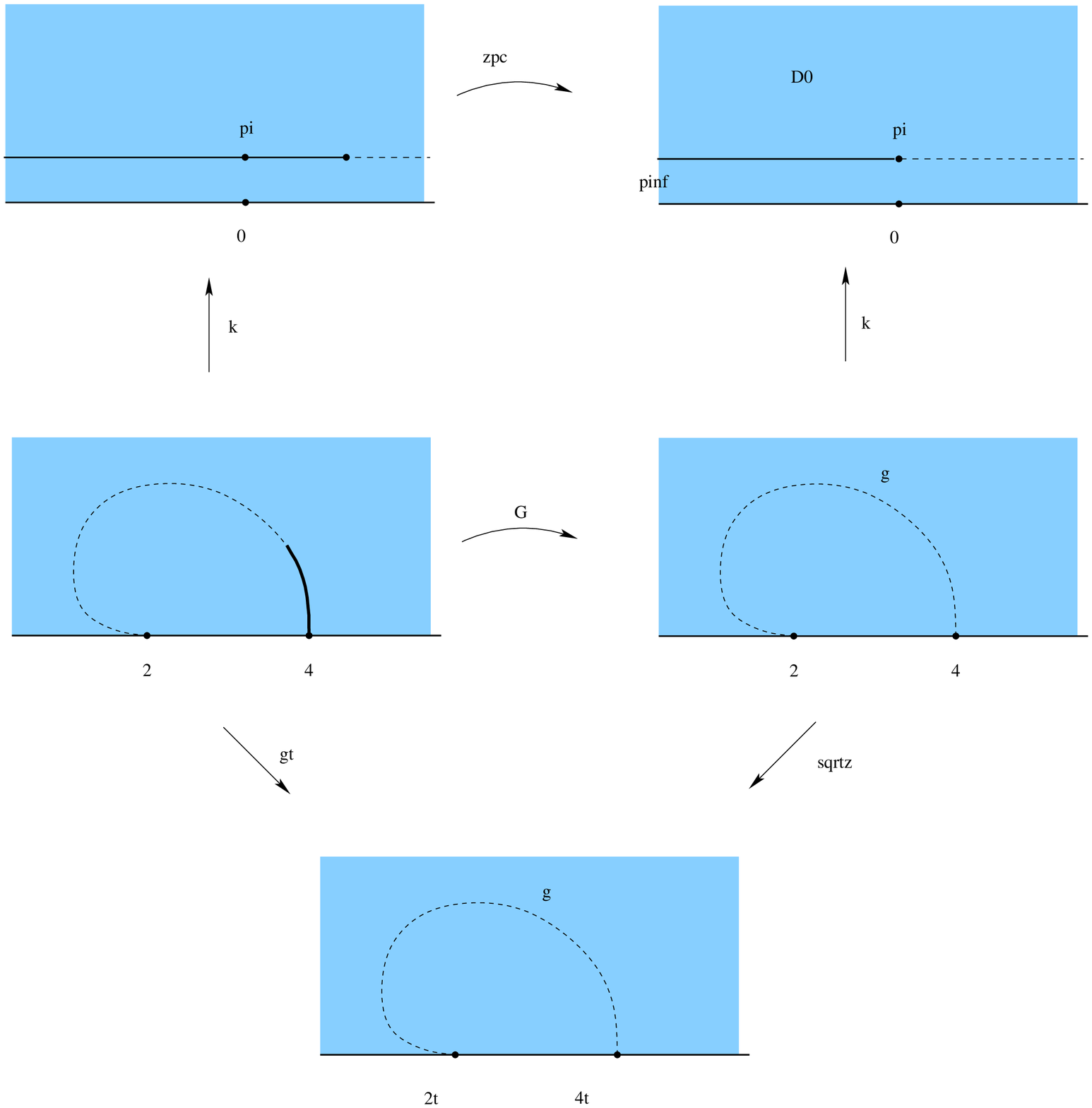}}
\caption{$ \kappa = 4$:} 
\centerline{Loewner flow $z\mapsto z-r$ on the slit half-plane,
}
\centerline{time changed 
Loewner flow $G$ on $\H$,} 
\centerline{and Loewner flow $g_t$ on $\H$.
}\label{tangdef}
\vskip 0.3truein
\end{figure}

As the case $|\kappa|<4$, we will construct the map $k:\H\to D_0$.
It is not enough to just construct an analytic function with the same
imaginary part, as was done with the logarithm in the case $\kappa >4$, since $k$ is
not bounded. Indeed, the identity function $z$ has  zero imaginary part on $\R$ yet is
nonconstant. It is perhaps easier to first construct a map $k_1$ of $\H$ onto $D_0$ which
maps $\infty$ to $\infty$. Then $k_1(z)-\log(z)$ will have no jump in the
imaginary part near $\infty$, and so it must behave like $cz+d$ for $z$
near $\infty,$ by the Schwarz reflection principle. Indeed the function defined by
\be\label{k1def2}
k_1(z)=z+1+\log(z)
\ee
is analytic in $\H$ and analytic across $\R\setminus \{0\}$. By calculus, $k_1$ is
increasing on $(-\infty,-1)$, decreasing on $(-1,0)$ and increasing on $(0,\infty)$
with $k_1(-1)=\pi i$.
The imaginary part of $k_1$ is zero on $(0,\infty)$ and equal to $\pi$ on $(-\infty,0)$.
Thus the image of $\R$ by $k_1$ is the boundary of $D_0$. Applying the argument principle
to regions of the form $\H\cap\{r < |z|<R\}$ for small $r$ and large $R$, we conclude that $k_1$ is
a conformal map of $\H$ onto $D_0$. Also $k_1(0)=p_{-\infty}$ so that 
$k_1(-1/z)$ maps $\H$ onto $D_0$ and sends $\infty$ to $p_{-\infty}$ as desired.
Set
\bes
k(z)=k_1(-1/(Az+B)),
\ees
where $A>0$ and $B\in \R$ and
\be\label{gdef3}
G(z)=k^{-1}(k(z)-r)
\ee
where $A$, $B$, and $r(s)$ will be determined shortly.
Then
\bes
\dot{G}=-\frac{\dot{r}}{k'\circ G}=\dot{r}\frac{(G+B/A)^2}{G+(B-1)/A}
\ees
and if $B=-1$, $A=1/2$, and $r=s/2$ then $\dot{r}=1/2$ and
(\ref{LDE-G}) holds with $\sigma=4$. Then the trace $\g=k^{-1}(R_0)$ is a curve in $\H$ from 
$\kappa=4$ to $2$ which is tangential to $\R$ at $2$. In summary, we conclude

\begin{prop}\label{p:tang}
Let
\bes
k(z)=\frac{4-z}{2-z}+\log\Bigl(\frac{2}{2-z}\Bigl)
\ees
and
\bes
g_t(z)=(1-t)^{\frac{1}{2}}\,k^{-1}\Bigl(k(z)+\frac{1}{2}\log(1-t)\Bigl).
\ees
Then $k$ is a conformal map of $\H$ onto $\H \setminus \{x+\pi i: x \le 0\}$ 
and $g_t$ satisfies the
Loewner equation
\bes
\dot{g_t}=\frac{2}{g_t-4\sqrt{1-t}},
\ees
with $g_0(z)\equiv z$. The trace 
$\gamma=k^{-1}(\{x+i: x > 0\})$
is a curve in $\H$ that begins at $4$, meeting $\R$ at right angles, and ending at $2$,
where it is tangential to $\R$.
The case
$\kappa=-4$ can be obtained from the case $\kappa=4$ by reflecting about the
imaginary axis.
\end{prop}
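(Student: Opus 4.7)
My plan is to verify each step of the construction sketched in the exposition preceding the statement. There are three parts: show that $k$ is a conformal bijection $\H \to D_0 = \H \setminus \{x+\pi i : x \le 0\}$ of the stated form, verify that $g_t$ as defined satisfies the Loewner equation with driving term $4\sqrt{1-t}$, and analyze the geometry of the trace.

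First I would analyze $k_1(z) = z + 1 + \log z$ with the standard branch ($0 < \arg z < \pi$ in $\H$). Boundary computations give $\operatorname{Im} k_1 = 0$ on $(0,\infty)$, $\operatorname{Im} k_1 = \pi$ on $(-\infty,0)$, the sole critical point $z=-1$ maps to $\pi i$, and the boundary image traces out $\partial D_0$. An argument-principle count on half-annuli $\H \cap \{r < |z| < R\}$ (letting $r \to 0$ and $R \to \infty$) then establishes that $k_1:\H \to D_0$ is a conformal bijection with $k_1(0) = p_{-\infty}$ and $k_1(\infty) = \infty$. Precomposing with the M\"obius self-map $\phi(z) = 2/(2-z)$ of $\H$, which sends $\infty$ to $0$, gives $k = k_1\circ\phi : \H \to D_0$ with $k(\infty) = p_{-\infty}$; algebraic simplification yields the stated closed form.

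Next I set $G_s(z) := k^{-1}(k(z) - s/2)$. Since $w \mapsto w - s/2$ sends $D_0 \setminus [\pi i,\, \pi i + s/2]$ onto $D_0$ fixing $p_{-\infty}$, each $G_s$ is the hydrodynamically normalized conformal map of $\H \setminus \gamma[0,t(s)]$ onto $\H$, where $\gamma := k^{-1}(R_0)$ and $R_0 = \{x+\pi i : x \ge 0\}$. Differentiating $k(G_s) = k(z) - s/2$ in $s$ and using $k'(z) = (4-z)/(2-z)^2$ (direct computation from the factorization $k = k_1\circ\phi$) gives
\[
\dot G_s \,=\, -\frac{1}{2\,k'(G_s)} \,=\, \frac{(G_s - 2)^2}{2(G_s - 4)} \,=\, \frac{2}{G_s - 4} + \frac{G_s}{2},
\]
which is exactly \eqref{LDE-G} with $\sigma \equiv 4$. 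Undoing the time change $s = -\log(1-t)$ of Section \ref{s:timechange} via $g_t = \sqrt{1-t}\, G_{s(t)}$ then produces $\dot g_t = 2/(g_t - 4\sqrt{1-t})$.

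Finally, since $\Gamma(s) = G_s^{-1}(4) = k^{-1}(k(4) + s/2) = k^{-1}(\pi i + s/2)$, the trace coincides with $k^{-1}(R_0)$, with endpoints $\gamma(0) = k^{-1}(\pi i) = 4$ (from $k(4) = 0 + \log(-1) = \pi i$) and $\gamma(1) = k^{-1}(\infty) = 2$. The right angle at $\gamma(0) = 4$ comes from $k'(4) = 0$ together with $k''(4) = -1/4$: locally $k(z) - \pi i \sim -\tfrac{1}{8}(z-4)^2$, so the preimage of the horizontal ray $R_0$ is tangent to the imaginary direction at $4$. Tangency at $\gamma(1) = 2$ follows from the leading asymptotic $k(z) \sim 2/(2-z)$ as $z \to 2$, which yields $\operatorname{Im} z \sim \tfrac{\pi}{2}(\operatorname{Re} z - 2)^2$ along $\gamma$. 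I expect the main technical obstacle to be the univalence of $k_1$: the argument-principle count on half-annuli must be carried out carefully near $z = 0$, where the logarithmic singularity produces the accessible prime end $p_{-\infty}$. Once that is in place, the remainder reduces to algebraic manipulation and standard local analysis at the critical point and pole of $k$.
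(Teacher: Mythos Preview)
Your proposal is correct and follows essentially the same route as the paper: build $k_1(z)=z+1+\log z$, prove its univalence onto $D_0$ via boundary tracing and the argument principle on half-annuli, precompose with a M\"obius self-map of $\H$ sending $\infty$ to $0$, and then differentiate $k(G_s)=k(z)-s/2$ to recover the time-changed Loewner equation with $\sigma\equiv 4$. The only cosmetic difference is that the paper writes $k(z)=k_1(-1/(Az+B))$ with undetermined $A,B$ and finds $A=1/2$, $B=-1$ by matching the ODE, whereas you go directly to $\phi(z)=2/(2-z)$ and verify; these are the same map, and your closed-form computation of $k'(z)=(4-z)/(2-z)^2$ makes the algebra slightly cleaner.
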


\subsection{Comments}

In  \cite{KNK}, an implicit equation for $g_t$ is found in each of the
cases above. They find the explicit conformal maps only in the special
case $\kappa=3\sqrt{2}$ (see Section 5 of \cite{KNK}). 
In this case  $\gamma=\gamma_\kappa$ is
a half circle and the example is closely related to the early work of
Kufarev \cite{K}.

The maps $g_t$ can also be computed without using Loewner's
differential equation by simply normalizing the maps we have
constructed at $\infty$.
For example, to determine $A$ and $B$ in the definition of $k$ in
(\ref{kdef}) when $\kappa> 4$, we want
\begin{equation*}
g_t(z)=\sqrt{1-t}~k^{-1}\bigl(\frac{1}{r}k(z)\bigl)=z + \frac{2t}{z}+\dots,
\end{equation*}
so that
\begin{equation*}
\frac{1}{r} k(z)=k\Bigl(\frac{g_t}{\sqrt{1-t}}\Bigl)
=k\circ\Bigl(\frac{z}{\sqrt{1-t}}+\frac{2t}{z\sqrt{1-t}}+\dots\Bigl).
\end{equation*}
Thus
\bes
\frac{(1-t)^{-\theta/2}}{r}\frac{\Bigl(1-\frac{B\sqrt{1-t}}{z}+\frac{2t}{z^2}+ O(\frac{1}{z^3})\Bigl)}{1-\frac{B}{z}}
=\biggl(\frac{1-\frac{A\sqrt{1-t}}{z}+\frac{2t}{z^2}+ O(\frac{1}{z^3})}{1-\frac{A}{z}}\biggl)^{1-\theta}.
\ees
Letting $z\to \infty$, we conclude that $r=(1-t)^{-\theta/2}$
and
\bes
1+\frac{B(1-\sqrt{1-t})}{z}+\frac{B^2(1-\sqrt{1-t})+2t}{z^2}+O(\frac{1}{z^3})=
\ees
\bes
1+\frac{(1-\theta)A(1-\sqrt{1-t})}{z}+\frac{1}{z^2}\Bigl((1-\theta)(A^2(1-\sqrt{1-t})+2t)+
\frac{1}{2}(1-\theta)(-\theta)A^2(1-\sqrt{1-t})^2\Bigl).
\ees
Equating coefficients, we obtain
\bes
B=(1-\theta)A \quad\text{ and }\quad A^2=\frac{4}{1-\theta},
\ees
which gives (\ref{ABdef}) as desired.

While it is possible to verify that $g_t$ satisfies Loewner's equation directly from its definition and avoid the use
of Section \ref{s:self-similar}, the former approach using the renormalization was what led us
to define $k$ in the first place. The renormalization idea is of critical
importance in Section \ref{s:collisions}.

\bigskip
If $k_1$ is the map (\ref{k1def2}) of $\H$ to the half-plane minus a horizontal half 
line as in
Section \ref{s:tangential}, then $-1/k_1(z)$ is a conformal map 
of the upper half
plane to the upper half-plane minus a slit along a tangential circle.
A careful analysis of the asympotics of the driving term $\lambda(t)$,
as $t\to 0$, for this curve was
made in  \cite{PV} using the Schwarz-Christoffel representation. With
the formula for the conformal map given here, an explicit expression
for the driving term can be given.

\section{Convergence of traces and driving terms}\label{s:close}

In general, it is not true that $||\l_n-\l||_{\infty}\to 0$ implies
$||\g_n-\g||_{\infty}\to 0$. 
A counterexample is described in page 116 of  \cite{La}.
All that can be concluded is Caratheodory convergence of $\H\setminus \g_n[0,t]$
to $\H\setminus \g[0,t]$. Neither does uniform convergence $\g_n\to\g$ imply $\l_n\to\l,$
see Figure \ref{notclose2} for a counterexample.
In Section \ref{s:uniform} we will give a
condition on a sequence of driving terms $\l_n$ 
that guarantees uniform convergence of the traces $\g_n$, and in
Section \ref{s:nearby} we will give a geometric condition
on traces that guarantees uniform convergence of their driving terms.

\subsection{Uniform convergence of traces}\label{s:uniform}

If $\l_n\to\l$ and if additionally the sequence $\{\g_n\}$ is known to be equicontinuous, then
uniform convergence follows easily. The following result makes use of this principle and
applies to a large class of driving terms. Let $\l_1,\l_2:[0,1]\to\R.$

\begin{thm}\label{t:uniform} For every $\eps>0$, $C<4$, and $D>0$ there is $\delta>0$
such that if 
\begin{equation}\label{unif1}
||\l_1-\l_2||_{\infty} < \delta
\end{equation}
and if 
\begin{equation}\label{unif2}
|\l_j(t)-\l_j(t')|\leq C |t-t'|^{1/2}
\end{equation}
whenever $|t-t'|<D$ and $j=1$ or $2$, then the traces $\g_1, \g_2$ are
Jordan arcs with
\begin{equation}\label{unif3}
\sup_{t\in[0,1]}|\g_1(t)-\g_2(t)|<\eps.
\end{equation}
\end{thm}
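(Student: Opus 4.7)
The plan is to combine uniform equicontinuity of the traces, inherited from the local H\"older-$1/2$ bound (\ref{unif2}), with pointwise closeness of $\gamma_1(t)$ and $\gamma_2(t)$ at each $t$, obtained from the sup-norm closeness (\ref{unif1}) via an interior Loewner estimate. By translation invariance, $\sup_t|\gamma_1-\gamma_2|$ is unchanged upon replacing $\lambda_j$ by $\lambda_j - \lambda_1(0)$, so I will assume $\lambda_1(0)=0$. Iterating (\ref{unif2}) on a partition of $[0,1]$ with step size less than $D$ then yields a uniform sup bound $M=M(C,D)$ on $\lambda_1$, and hence $\sup|\lambda_j|\le M+\delta$ for $j=1,2$.

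For the equicontinuity step I would fix $T\in[0,1)$ and $h<D$ with $T+h\le 1$, and consider the renormalized driving terms
\[
\hat\lambda_j(\tau) = \frac{\lambda_j(T+h\tau) - \lambda_j(T)}{\sqrt{h}},\quad \tau\in[0,1].
\]
By (\ref{unif2}), $\hat\lambda_j(0)=0$ and $\|\hat\lambda_j\|_{1/2,[0,1]}\le C<4$, so by \cite{MR1} and \cite{L} each $\hat\lambda_j$ generates a Jordan arc $\hat\gamma_j$ lying in a fixed compact set $\Omega(C)\subset\overline{\mathbb{H}}$ with a uniform modulus of continuity $\omega_C$ depending only on $C$. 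Since $\hat\gamma_j$ is $(g_T^{(j)}(\gamma_j[T,T+h])-\lambda_j(T))/\sqrt{h}$, I would pull back through $f_T^{(j)}$, a normalized conformal map whose distortion is controlled on the bounded set $\lambda_j(T)+\sqrt{h}\,\Omega(C)$, to produce a single modulus $\omega=\omega_{C,D,M}$ with $|\gamma_j(t_1)-\gamma_j(t_2)|\le\omega(|t_1-t_2|)$ for $j=1,2$.

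For the closeness step I would first show that for $z\in\mathbb{H}$ with $\Im z$ large enough (so that the flow stays bounded away from the real line on $[0,1]$), a Gronwall estimate on $g_t^{(1)}(z)-g_t^{(2)}(z)$ using (\ref{ODE}) yields $|g_t^{(1)}(z)-g_t^{(2)}(z)|\le L\,\|\lambda_1-\lambda_2\|_\infty$ with $L=L(\Im z)$, which standard arguments transfer to closeness of the inverse maps $f_t^{(j)}$ on compact subsets of $\mathbb{H}$. To reach the boundary, I would approximate $\gamma_j(t)$ from the interior: fix $\tau>0$ with $\omega(\tau)<\epsilon/4$, and for each $t\ge\tau$ consider $p_j(t)= f_{t-\tau}^{(j)}(\lambda_j(t)+i\mu)$ for a small $\mu>0$. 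Equicontinuity of $\gamma_j$ on $[t-\tau,t]$ together with the boundary regularity of $f_{t-\tau}^{(j)}$ obtained in the previous step yields $|p_j(t)-\gamma_j(t)|<\epsilon/4$ uniformly in $t$ and $j$ for $\mu$ small, while the Gronwall bound applied to the inverses at the interior point $\lambda_j(t)+i\mu$ gives $|p_1(t)-p_2(t)|<\epsilon/2$ provided $\delta$ is small enough. The case $t<\tau$ follows from equicontinuity together with $|\gamma_1(0)-\gamma_2(0)|=|\lambda_1(0)-\lambda_2(0)|<\delta$, completing (\ref{unif3}).

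The main obstacle will be the equicontinuity step, specifically the quantitative conformal distortion estimate needed to pull the uniform modulus on $\hat\gamma_j$ back to $\gamma_j$ without losing control near the tip of $K_T^{(j)}$. The hypothesis $C<4$ is used essentially here: it ensures via \cite{MR1} and \cite{L} that the renormalized hulls lie in a fixed compact set bounded away from $\infty$, so that $f_T^{(j)}$ restricted to $\sqrt{h}\,\Omega(C)+\lambda_j(T)$ has derivative bounds compatible with the natural capacity scale $\sqrt{h}$, producing the desired scale-invariant modulus $\omega$.
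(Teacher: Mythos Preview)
The gap is in the equicontinuity step, precisely at the point you flag as the main obstacle but do not actually overcome. The assertion that ``the renormalized hulls lie in a fixed compact set $\Omega(C)$, so that $f_T^{(j)}$ restricted to $\sqrt{h}\,\Omega(C)+\lambda_j(T)$ has derivative bounds compatible with $\sqrt{h}$'' is not justified: knowing that $g_T^{(j)}(\gamma_j[T,T+h])$ has diameter $\lesssim\sqrt{h}$ says nothing about the boundary behaviour of $f_T^{(j)}$ near $\lambda_j(T)$, which is governed by the geometry of the \emph{past} curve $\gamma_j[0,T]$ near its tip, not by the future increment. Without some a priori regularity of $\H\setminus\gamma_j[0,T]$ (a John condition or quasislit property), $f_T^{(j)}$ can be arbitrarily badly behaved near $\lambda_j(T)$, and $\gamma_j[T,T+h]$ could have diameter far exceeding $\sqrt{h}$. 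This is a genuine bootstrapping problem: condition (\ref{unif2}) controls each renormalized piece, but one must propagate that control back to the original domain, and the naive pullback fails exactly at the tip.

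The paper resolves this by proving that the entire arc $\gamma_j[0,T]$ is a $K$-quasislit with $K=K(C,D)$: it partitions $[0,T]$ into pieces of length $D$, observes that each renormalized piece is a $K(C)$-quasislit by \cite{L}, and then shows---via a nontrivial quasiconformal welding argument---that a concatenation of quasislits is again a quasislit. The resulting John property of $\H\setminus\gamma_j[0,T]$ gives uniform H\"older continuity of $f_T^{(j)}$ up to the boundary via \cite{P}, which is exactly the missing ingredient in your pullback. Your closeness step (Gronwall for $g_t^{(j)}$ in the interior, then interior approximation of $\gamma_j(t)$) is a legitimate and more constructive alternative to the paper's compactness-plus-subsequence argument, but note that it too leans on the boundary regularity of $f_{t-\tau}^{(j)}$ that the unfinished equicontinuity step was supposed to deliver.
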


\begin{proof} Analogous to the definition of quasislit discs in
\cite{MR1} we call a domain of the form $\H\setminus\gamma$ a
$K-${\it quasislit half-plane} if
there is a $K-$quasiconformal map $F_\g:\H\to\H$ with $F_\g[0,i]=\gamma.$ Thus $\H\setminus\gamma$
is a quasislit half-plane (for some $K$) if and only if $\gamma$ is a quasiconformal arc in
$\overline\H$ that meets $\R$ non-tangentially.
Equivalently $\gamma\cup \gamma^\R$ is a quasiconformal arc, where
$\g^\R$ is the reflection of $\gamma$ about $\R$.
We will show first that $\H\setminus \g_1$ is a $K-$quasislit
half-plane with $K$ depending
on $C$ and $D$ only. 
%Let $g_{1,t} = g_t^{\l_1}$ and $h_{j,t}=g_t^{\xi_j}$ where
Let
$$\xi_j(t) = \l_1(t+j D),\quad 0\leq j\leq \big\lfloor \frac1D \big\rfloor.$$
Then
$$g_t^{\l_1} = g_u^{\xi_N}\circ g_D^{\xi_{N-1}} \circ \cdots \circ
g_D^{\xi_1} \circ g_D^{\xi_0}$$
where 
$$N=\big\lfloor \frac{t}D \big\rfloor \quad{\text and} \quad u=t-N D.$$
By assumption, $|\xi_j(t)-\xi_j(t')|^{\frac{1}{2}}\le C|t-t'|$ for
$t,t'\in[0,D]$
%$\xi_j\in \operatorname{Lip}_{1/2}(C)$ on $[0,D]$ 
and it follows from 
(\cite{L}, Theorem 2) that each ${g_D^{\xi_j}}^{-1}(\H)$ is a quasislit
half-plane (with $K=K(C)$).
It follows
that $\g_1[0,1]$ is the concatenation of $\big\lfloor \frac1D \big\rfloor +1$ 
$K-$quasislit half-planes with $K=K(C).$ For the sake of completeness, we sketch a proof of the 
fact that the concatenation $\alpha*\beta$ of two quasislits
$\alpha, \beta$ is a 
quasislit, see \cite{MR1} for the disc version. Let $h_{\alpha}:
\H\setminus\alpha\to\H$ be conformal
and let $F_{\beta}:\H\to\H$ be $K-$quasiconformal with
$F_{\beta}[0,i]=\beta$. Let 
$\psi(z)=\sqrt{z^2+1}$ be a normalized conformal map $\H\setminus[0,i]\to\H$.

\begin{figure}[h]
\psfrag{x}{$x$}
\psfrag{mx}{$-x$}
\psfrag{psi}{$\psi$}
\psfrag{phi}{$\phi$}
\psfrag{hinv}{$h_\alpha^{-1}$}
\psfrag{F}{$F$}
\psfrag{F2}{$F_{\beta}$}
\psfrag{G1}{$\alpha$}
\psfrag{G2}{$\beta$}
\psfrag{hinvG}{$h_\alpha^{-1}(\beta)$}
\psfrag{comp}{$F_{\alpha*\beta}$}
\centerline{\includegraphics[height=2.3in]{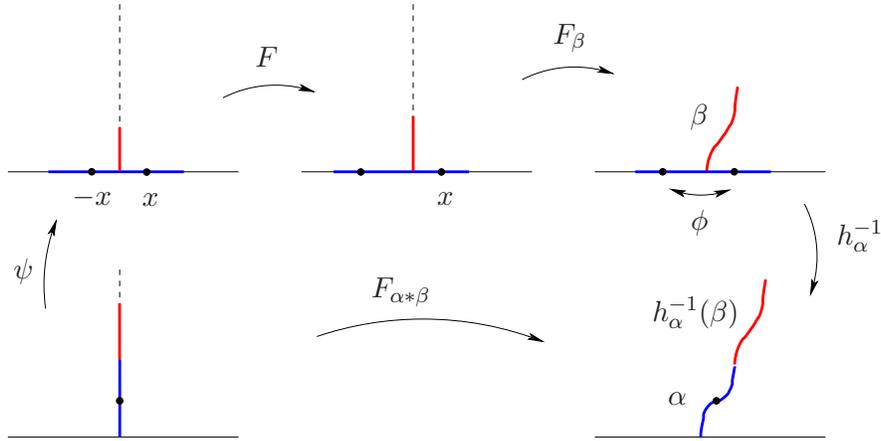}}
\caption{Concatenation $F_{\alpha*\beta}=h_{\alpha}^{-1}\circ F_{\beta}\circ F \circ \psi$.} 
\end{figure}

We would like to find a qc map $F:\H\to\H$ with $F(i \R_+) = i \R_+$ 
so that $h_\alpha^{-1}\circ F_{\beta}\circ F\circ\psi$ is qc on $\H$. Thus we need
$F_{\beta}(F(-x))=\phi(F_{\beta}(F(x)))$
for $x\in[-1,1],$ where $\phi: h_\alpha[\alpha]\to h_\alpha[\alpha]$ is the (decreasing) 
welding homeomorphism, defined through $y=\phi(x) \Leftrightarrow
h_\alpha^{-1}(x) = h_\alpha^{-1}(y).$
In order to construct such $F$, notice that
$\phi$ has a quasisymmetric extension to $\R$ by \cite{MR1} and \cite{L}.
It is easy to check that the function
$$F(x) =
\begin{cases}\ x \ \ \ \ \ &\text{if } x\geq0\\
\ F_{\beta}^{-1} (\phi(F_{\beta}(-x))) &\text{if } x<0
\end{cases} $$
is quasisymmetric. Extending $F$ to $\H$ in such a way that the imaginary axis is
fixed (this can be done by using the Jerison-Kenig extension \cite{JK}, (see also 
\cite{AIM}, Chapter 5.8)), we have obtained the desired map $F.$
Now $h_\alpha^{-1}\circ F_{\beta} \circ F \circ \psi(cz)$ is quasiconformal on
$\H\setminus[0,i]$, for an appropriate $c>0$, 
and continuous on $[0,i],$
hence quasiconformal on $\H.$ Thus $\alpha*\beta$ is a quasislit, and it follows by induction that
$\gamma_1$ is a quasislit. The same argument applies to $\gamma_2$.

Next, we claim that the parametrization of a $K-$quasislit by
half-plane capacity has modulus of continuity
depending on $K$ only. Denote $g_t:\H\setminus\gamma[0,t]\to\H$ the normalized map, then
$g_t(\gamma[t,t'])$ is a $K-$quasislit of capacity $t'-t,$ and hence of diameter
$\leq M\sqrt{|t'-t|}$
by \cite[Lemma 2.5]{MR1}. Because $g_t^{-1}$ is H\"older continuous with bound depending on $K$ only (by the John property 
of $\H\setminus\gamma$ and \cite[Corollary 5.3]{P}; see \cite{W} for the modifications to $\H$), 
the claim follows.

To finish the proof of the theorem, let $\lambda_{n,1}$ and $\lambda_{n,2}$ satisfy (\ref{unif2})
and $||\lambda_{n,1}-\lambda_{n,2}||_{\infty}\leq \frac1n.$ Passing to a subsequence we may assume
$\lambda_{n,j}\to\lambda$ uniformly. Denote $\gamma$ the Loewner trace of $\lambda.$ 
By the above equicontinuity of quasislits, we can pass to another
subsequence and may assume that there are curves $\gamma_j$ such that $\gamma_{n,j}\to \gamma_j$ uniformly.
Now the Theorem follows from the next lemma.
\end{proof}

\begin{lemma}
If $\lambda_n\to\lambda$ and $\gamma_n\to\gamma_\infty$ uniformly, then
$\gamma^\l=\gamma_\infty.$ That is, $\gamma_\infty$ is driven by $\lambda$.
\end{lemma}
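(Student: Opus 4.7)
The plan is to show that for each $t \in [0,1]$, the Loewner hull $K_t^\lambda$ driven by $\lambda$ coincides with $\mathrm{fill}(\gamma_\infty[0,t])$; by the definition of trace given in Section~\ref{s:definitions}, this identifies $\gamma_\infty$ as a curve generating the hulls of $\lambda$, and a short further argument then promotes this to the identity $\gamma_\infty(t)=\gamma^\lambda(t)$ as parametrized curves.

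First I would invoke the standard continuity of the Loewner equation with respect to the driving term: since $\lambda_n\to\lambda$ uniformly on $[0,1]$, the solutions of \eqref{ODE} satisfy $g_t^{\lambda_n}(z)\to g_t^{\lambda}(z)$ locally uniformly on $\H\setminus K_t^\lambda$, and consequently $K_t^{\lambda_n}\to K_t^\lambda$ in the Caratheodory sense (this is a standard consequence of continuous dependence of ODE solutions on a parameter, see \cite{La}; the key point is that for $z$ at positive distance from $K_t^\lambda$, a uniform lower bound on $|g_s^{\lambda_n}(z)-\lambda_n(s)|$ on $[0,t]$ prevents premature blow-up for $n$ large).

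Second, by hypothesis each $\gamma_n$ is the trace of $\lambda_n$, so $K_t^{\lambda_n}=\mathrm{fill}(\gamma_n[0,t])$. Uniform convergence $\gamma_n\to\gamma_\infty$ gives Hausdorff convergence of the image sets $\gamma_n[0,t]\to\gamma_\infty[0,t]$, and a standard Caratheodory kernel argument applied to the unbounded components of their complements in $\H$ then yields $\mathrm{fill}(\gamma_n[0,t])\to\mathrm{fill}(\gamma_\infty[0,t])$. Uniqueness of the Caratheodory limit forces $K_t^\lambda=\mathrm{fill}(\gamma_\infty[0,t])$ for every $t$.

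To finish I would verify the parametrizations agree: since $K_t^\lambda$ is generated by the continuous curve $\gamma_\infty$ it is locally connected, hence $f_t^\lambda=(g_t^\lambda)^{-1}$ extends continuously to $\overline{\H}$, and the Caratheodory kernel convergence combined with local connectivity of the limit upgrades convergence of $f_t^{\lambda_n}$ to uniform convergence on $\overline{\H}$. Then $\gamma_n(t)=f_t^{\lambda_n}(\lambda_n(t))\to f_t^\lambda(\lambda(t))=\gamma^\lambda(t)$, while $\gamma_n(t)\to\gamma_\infty(t)$ by hypothesis, so $\gamma^\lambda(t)=\gamma_\infty(t)$ as desired. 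The main obstacle is the Caratheodory convergence of the fills in step two when $\gamma_\infty$ has self-touchings that enclose bounded complementary components; this is handled by verifying that any disk in the unbounded component of $\H\setminus\gamma_\infty[0,t]$ eventually lies in the unbounded component of $\H\setminus\gamma_n[0,t]$, and conversely, using that $\gamma_n\to\gamma_\infty$ uniformly together with standard results on kernel convergence of simply connected domains.
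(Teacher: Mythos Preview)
Your argument is correct but takes a different route from the paper. The paper uses only the uniform convergence $\gamma_n\to\gamma_\infty$ to obtain Caratheodory convergence of $\H\setminus\gamma_n[0,t]$ to $\H\setminus\gamma_\infty[0,t]$, hence $f_n\to f_{\gamma_\infty}$ and $f_n'\to f_{\gamma_\infty}'$ locally uniformly; it then passes to the limit directly in the Loewner PDE $\dot f_n=f_n'\,\frac{2}{\lambda_n-z}$ and integrates in $t$ via dominated convergence to conclude that $f_{\gamma_\infty}$ itself satisfies $\dot f_{\gamma_\infty}=f_{\gamma_\infty}'\,\frac{2}{\lambda-z}$, so $\gamma_\infty$ is driven by $\lambda$. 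By contrast, you run two separate Caratheodory arguments (one from $\lambda_n\to\lambda$ giving $K_t^{\lambda_n}\to K_t^\lambda$, one from $\gamma_n\to\gamma_\infty$ giving $\mathrm{fill}(\gamma_n[0,t])\to\mathrm{fill}(\gamma_\infty[0,t])$) and then invoke uniqueness of the kernel limit, followed by a boundary-extension step to match parametrizations. Your approach is sound and perhaps more conceptual, but it imports more machinery (continuous dependence of hulls on the driving term from \cite{La}, and the upgrade to boundary-uniform convergence of $f_t^{\lambda_n}$), whereas the paper's integral/dominated-convergence computation is shorter, entirely self-contained, and sidesteps any discussion of boundary behavior. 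Note also that in the paper's application the $\gamma_n$ are $K$-quasislits (simple arcs), so the self-touching issue you flag does not in fact arise there.
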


\begin{proof} As $\gamma_n\to\gamma_\infty$, we have
$$\H\setminus\gamma_n[0,t] \to \H\setminus \gamma_\infty[0,t]$$
in the Caratheodory topology, for each $t.$ Hence
$$f_n(t,z)\to f_{\gamma_\infty}(t,z)$$
uniformly on compact subsets of $\H$, and so
$$f_n' \to f_{\gamma_\infty}' $$
locally uniformly. Using
$$\dot f_n = f_n' \frac{2}{\l_n-z}$$
it follows that
$$\dot f_n \to f_{\gamma_\infty}' \frac{2}{\l-z}$$
for each $z$ and each $t,$ and that $\dot f_n$ is uniformly bounded on $[0,T]\times \{z\}$
for each $T$ and $z.$ Thus
$$f_{\gamma_\infty}(t_1,z) - f_{\gamma_\infty}(t_2,z) = \lim_{n\to\infty} (f_n(t_1,z) - f_n(t_2,z)) = $$
$$ =\lim_{n\to\infty} \int_{t_1}^{t_2} \dot f_n(t,z) dt =  
\int_{t_1}^{t_2} f_{\gamma_\infty}^{\prime}(t,z) \frac{2}{\l(t)-z}dt$$
by dominated convergence. Hence
$$\dot f_{\gamma_\infty} = f'_{\gamma_\infty} \frac{2}{\l-z}$$
and the lemma follows.
\end{proof}

\subsection{Uniform convergence of driving terms}\label{s:nearby}

In this section we develop geometric criteria for two hulls to have
driving terms that are uniformly close.
Figure \ref{notclose2}
shows two hulls that are uniformly close but with large uniform distance
between their driving terms. 
\begin{figure}[h]
\psfrag{ep2i}{$\varepsilon + 2i$}
\psfrag{em2i}{$-\varepsilon + 2i$}
\psfrag{p1}{$p_1$}
\psfrag{p2}{$p_2$}
\centerline{\includegraphics[height=1.5in]{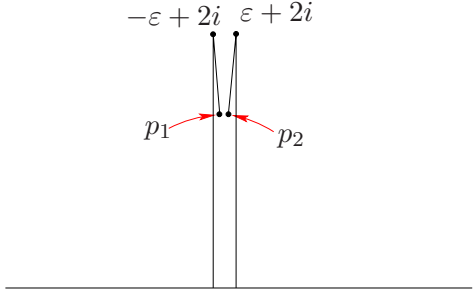}}
\caption{Close curves whose driving terms are not close.} \label{notclose2}
\end{figure}
If $R_\theta=\{re^{i\theta}: 0 <r<1\}$
and if $f(z)=\sqrt{z^2-4}$ then the image of $R_\eps$ by the map
$f(z)-\eps$ is similar to the left-hand curve in Figure
\ref{notclose2} for small $\eps$. 
The corresponding $\l$ is equal to $-\eps$ for $0\le t \le
1$ and is $>-\eps$ thereafter with a maximum value of approximately
$1$ by %Proposition \ref{hgrowth} or by 
direct calculation (see Section \ref{ss:collisions}).  Likewise the image of $\R_{\pi-\eps}$ by the map
$f(z)+\eps$ is similar to the right-hand curve in Figure
\ref{notclose2}. The corresponding $\l$ is equal to $\eps$ for
$0 \le t \le 1$  and is $<\eps$ thereafter with a minimum value of
approximately $-1$ for small $\eps$. The corresponding hulls
$\gamma_j$ satisfy 
\bes
\sup|\gamma_1(t)-\gamma_2(t)| \le 2\eps,
\ees
but the driving terms satisfy
\bes
\sup |\l_1(t)-\l_2(t)|> 2-2\eps.
\ees

\bigskip

\begin{thm}\label{nearby}
Given $\eps>0$ and $c< \infty$,  suppose $A_1$, $A_2$ are hulls with 
$\diam A_j \le 1$  and such that there 
there exists a hull 
$B\supset A_1\cup A_2$  such that
\bes
\dist(\zeta,A_1) < \eps \hfil\text{ and }\hfil \dist(\zeta,A_2) <\eps
\hfil\text{ for all } \zeta\in B.
\ees
Suppose further that there are curves $\sigma_j\subset \H\setminus A_j$
connecting a point $p\in \H\setminus B$ to $p_j\in A_j$ with 
$\diam \sigma_j \le c\, \eps < \diam A_j$, for $j=1,2$.
If $g_j$ is the hydrodynamically normalized conformal map of
$\H\setminus A_j$ onto $\H$, for $j=1,2$,
then
\bes
|g_1(p_1)-g_2(p_2)|\le
2c_0\eps^\frac{1}{2}(c^{\frac{1}{2}}+\rho),
\ees
where $\rho$ 
is the hyperbolic distance from $p$ to $\infty$ 
in $\Omega=\C\setminus \widetilde B$, where
$\widetilde B=\overline{B\cup B^\R}\cup_j I_j$ and $I_j$ are the
bounded intervals in $\R\setminus\overline{B\cup B^\R}$.
\end{thm}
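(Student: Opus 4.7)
My plan is to use the triangle inequality
\[
|g_1(p_1)-g_2(p_2)|\le |g_1(p_1)-g_1(p)|+|g_1(p)-g_2(p)|+|g_2(p)-g_2(p_2)|,
\]
with the intermediate point $p\in\H\setminus B$ supplied by the hypothesis. I will bound the two ``local'' terms by $c_0\sqrt{c\eps}$ each, and the ``global'' middle term by $c_0\sqrt{\eps}\,\rho$.

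For the local terms $|g_j(p)-g_j(p_j)|$: the curve $\sigma_j\subset\H\setminus A_j$ has diameter at most $c\eps$ and ends at the accessible prime end $p_j\in A_j$, so $\sigma_j$ lies in the disk of radius $c\eps$ around $p_j$. The image $g_j(\sigma_j)$ is then a curve in $\overline\H$ ending at $g_j(p_j)\in\R$, and the square-root distortion of conformal maps at an accessible boundary point (a standard length--area or Beurling--Warschawski estimate, with the $A_j$-dependence absorbed by $\diam A_j\le 1$) gives $\diam g_j(\sigma_j)\le c_0(c\eps)^{1/2}$.

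For the global term, $H:=g_1-g_2$ is analytic on $\Omega=\C^*\setminus\widetilde B$ (both $g_j$ extend to $\C^*\setminus\widetilde{A_j}\supset\Omega$ by Schwarz reflection), is real on $\R\cap\Omega$, and vanishes at $\infty$. I would first establish a uniform bound $|H|\le M:=2c_0\sqrt\eps$ on $\Omega$ via the factorization $g_j=f_{B_j'}\circ g_B$, where $B_j':=g_j(B\setminus A_j)$. By the closeness hypothesis, $B\setminus A_j$ lies in an $\eps$-neighbourhood of $A_j$, and the H\"older-$\tfrac12$ boundary behaviour of $g_j$ at the accessible prime ends of $A_j$ then forces $B_j'$ into a thin strip of height $\le c_0\sqrt\eps$ above $\R$, of half-plane capacity $\hcap(B_j')=d_B-d_{A_j}=O(\eps)$. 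The Cauchy representation \eqref{cauchy},
\[
f_{B_j'}(w)-w=\frac1\pi\int_{I_{B_j'}}\frac{\Im f_{B_j'}(x)}{x-w}\,dx,
\]
combined with $\Im f_{B_j'}\le c_0\sqrt\eps$ and $|I_{B_j'}|=O(1)$ yields $|f_{B_j'}(w)-w|\le c_0\sqrt\eps$ uniformly on $\H$, hence $|H|\le 2c_0\sqrt\eps$ on $\Omega$. Then the Schwarz lemma applied to $H/M$ via the Riemann map $\phi\colon\Omega\to\DD$ with $\phi(\infty)=0$ gives $|H(p)|\le M|\phi(p)|\le M\tanh(\rho/2)\le (M/2)\rho$, i.e.\ $|g_1(p)-g_2(p)|\le c_0\sqrt\eps\,\rho$. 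Summing the three estimates gives the theorem (after relabelling $c_0$).

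The main obstacle is the uniform bound $|f_{B_j'}(w)-w|\le c_0\sqrt\eps$: the naive pointwise Cauchy estimate $\sup\Im f_{B_j'}\cdot |I_{B_j'}|/\dist(w,I_{B_j'})$ deteriorates for $w$ close to $I_{B_j'}$, and must be combined with the thin-strip geometry of $B_j'$ (a consequence of the square-root distortion of $g_j$) via a careful splitting of the integral at the scale $\sqrt\eps$.
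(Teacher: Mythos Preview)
Your triangle-inequality decomposition and the treatment of the local terms $|g_j(p_j)-g_j(p)|$ are essentially what the paper does (the paper inserts $g_B(p)$ and so has four terms instead of three, but this is cosmetic). The paper's Lemma giving $\diam g(S)\le c_0(\diam A\cdot\diam S)^{1/2}$ is exactly the ``square-root distortion'' you invoke for the curves $\sigma_j$.

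The genuine gap is in the middle term. The uniform bound $|f_{B_j'}(w)-w|\le c_0\sqrt\eps$ that you need for the Schwarz lemma is \emph{false}, and no splitting of the Cauchy integral will rescue it. The imaginary part of $f_{B_j'}(w)-w$ is indeed bounded by $c_0\sqrt\eps$ (the height of $B_j'$), but for $w\in I_{B_j'}$ the real part is the Hilbert transform of $\Im f_{B_j'}$, and the Hilbert transform of a bounded function is only BMO, not $L^\infty$. Concretely, if $B_j'$ is (close to) a thin rectangle of height $h=c_0\sqrt\eps$ and length $L\asymp 1$, then $\Im f_{B_j'}$ is roughly $h\cdot\mathbf 1_{[a,b]}$ with transitions of width $\asymp h$ at the ends, and its Hilbert transform has size $\asymp h\log(L/h)\asymp\sqrt\eps\,|\log\eps|$; so $|H|$ is not $O(\sqrt\eps)$ uniformly on $\Omega$.

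There are two clean fixes. One is to keep your Schwarz-type idea but apply it to what \emph{is} uniformly bounded: $|\Im H|\le 2c_0\sqrt\eps$ on $\Omega$, together with $H(\infty)=0$, lets you apply Schwarz--Pick from $\DD$ into the strip $\{|\Im w|<2c_0\sqrt\eps\}$, and the strip's hyperbolic density $\ge\pi/(4c_0\sqrt\eps)$ yields $|H(p)|\le\frac{4}{\pi}c_0\sqrt\eps\,\rho$. The other --- and this is what the paper does --- is to avoid any uniform bound: insert $g_B$, write $g_j(p)-g_B(p)=(f_{B_j'}-\mathrm{id})(g_B(p))$, and estimate the Cauchy integral \emph{only at the single point} $w=g_B(p)$ via the exact identity
\[
\int_{I_B}\frac{dx}{|x-w|}=2\rho_{\C^*\setminus I_B}(w,\infty)=2\rho_\Omega(p,\infty),
\]
which gives $|g_j(p)-g_B(p)|\le\frac{2c_0}{\pi}\sqrt\eps\,\rho$ directly. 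This identity (Corollary~\ref{cauchy-rho} in the paper) is the efficient replacement for your Schwarz step.
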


For example, if the Hausdorff distance between $A_1$ and $A_2$ is less than
$\eps$ and if $B$ is the complement of the unbounded component
of $\H \setminus \overline{A_1\cup A_2}$, then $\dist(\zeta,A_j) < \eps$ for all
$\zeta\in A_j$, $j=1,2$.  
The theorem also applies in some situations
where the Hausdorff distance between $A_1$ and $A_2$ is large.
If $p_1$ and $p_2$ are the tips of the curves in Figure \ref{notclose2} then
points $p$ which are close to $p_j$ have very large hyperbolic distance to $\infty$.
\begin{figure}[h]
\psfrag{A1}{$A_1$}
\psfrag{A2}{$A_2$}
\psfrag{B}{$B$}
\psfrag{s1}{$\sigma_1$}
\psfrag{s2}{$\sigma_2$}
\psfrag{p1}{$p_1$}
\psfrag{p2}{$p_2$}
\psfrag{p}{$p$}
\centerline{\includegraphics[height=1.5in]{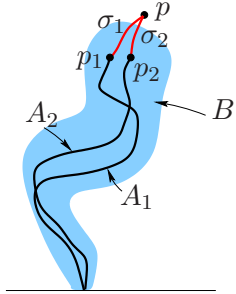}}
\caption{Close curves. } \label{hclose}
\end{figure}

The proof of Theorem \ref{nearby} will follow from several lemmas. 
The first proposition is well known, but we include it for the
convenience of the reader.

\begin{prop}\label{diam1} If $A$ is a hull, let $\widetilde A=
\overline{A\cup A^\R}\cup_j I_j$ where $A^\R$ is the reflection
of $A$ about $\R$ and $\{I_j\}$ are the bounded
intervals in $\R\setminus \overline{A\cup A^\R}$. If $g$ is the
hydrodynamically normalized conformal map of the simply connected domain
$\C^*\setminus \widetilde A$ onto $\C^*\setminus I$ 
where $I$ is an interval then
\be\label{capacity}
\diam{A} \le \diam{I} \le 4\diam{A}.
\ee
\end{prop}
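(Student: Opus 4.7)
The plan is to reduce both bounds to three classical facts about logarithmic capacity: $\text{cap}(I)=|I|/4$ for a line segment, the monotonicity $\text{cap}(E_1)\le\text{cap}(E_2)$ when $E_1\subset E_2$, and the two-sided comparison with diameter, $\diam(E)/4\le \text{cap}(E)$ for compact connected $E$ (a Koebe $1/4$-theorem consequence) and $\text{cap}(E)\le r$ whenever $E$ is contained in a closed disc of radius $r$.

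First I would identify $\text{cap}(\widetilde A)$ with $\diam(I)/4$. The hydrodynamic normalization of $g$ gives the expansion $g(z)=z+O(1/z)$ at $\infty$, so $g^{-1}$ has the same leading behavior. Composing $g^{-1}$ with the standard Joukowski-type uniformizer $\psi:\{|\zeta|>|I|/4\}\to\C^*\setminus I$ (which satisfies $\psi(\zeta)=\zeta+O(1)$) produces a uniformizer of $\C^*\setminus\widetilde A$ with the same leading coefficient $\zeta$, which means the two capacities agree: $\text{cap}(\widetilde A)=\text{cap}(I)=|I|/4$. After this identification, the proposition becomes the inequality $\diam(A)/4\le \text{cap}(\widetilde A)\le \diam(A)$.

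For the upper bound I would show $\widetilde A$ lies in a closed disc of radius $\diam(A)$. Since $\H\setminus A$ is simply connected, $\overline A$ must meet $\R$; pick any $x_0\in\overline A\cap\R$. Then $A\subset\overline{D(x_0,\diam A)}$, and because this disc is symmetric about $\R$, so is $A^\R$; any bounded interval $I_j\subset\R\setminus\overline{A\cup A^\R}$ has endpoints in $\overline{A\cup A^\R}$ and is therefore contained in the same disc. Hence $\widetilde A\subset\overline{D(x_0,\diam A)}$ and monotonicity gives $\text{cap}(\widetilde A)\le\diam(A)$. For the lower bound, observe that $\widetilde A$ is connected (since $A$ is connected and $A\cup A^\R$ already meet at $x_0$), contains $A$, and so $\text{cap}(\widetilde A)\ge \diam(\widetilde A)/4\ge \diam(A)/4$.

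The only step with any subtlety is the topological one in the upper bound --- that a hull with simply connected complement in $\H$ is connected and has closure meeting $\R$. Both follow from elementary planar topology (the separation theorem applied to $\overline\H\cup\{\infty\}$), but they are essential for symmetrically placing $A$ in a disc about a real point. The capacity facts themselves are standard.
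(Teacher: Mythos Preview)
Your approach is essentially the same as the paper's: both identify $|I|=4\,\text{Cap}(\widetilde A)$ via the Joukowski map and then bound capacity by diameter. Your upper bound is in fact more explicit than the paper's, since you use the symmetry of $\widetilde A$ and the real basepoint $x_0$ to place $\widetilde A$ in a disc of radius exactly $\diam A$; the paper simply invokes ``capacity is comparable to diameter.''

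One small correction: a hull need not be connected (two disjoint slits emanating from $\R$ form a hull), so your justification ``since $A$ is connected'' is not valid. The conclusion that $\widetilde A$ is connected is still correct, however, because $\C^*\setminus\widetilde A$ is simply connected (it carries the conformal map $g$ onto $\C^*\setminus I$), and the complement in $\C^*$ of a simply connected domain is connected. With that fix, your lower bound $\text{Cap}(\widetilde A)\ge \diam(\widetilde A)/4\ge \diam(A)/4$ goes through exactly as in the paper (where the inequality $\text{Cap}(E)\ge\diam(E)/4$ for connected $E$ is obtained by projecting onto a line).
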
 
\begin{proof}
Let $G$ be the conformal map of $\Omega=\C^*\setminus\widetilde A$ onto
$\DD$ with
$G(z)=a/z + b/z^2+ \text{\rm O}(1/z^3)$ and $a > 0$. Then 
$g(z)=a(G + 1/G) +b/a$ so that 
\bes
I=[-2a+\frac{b}{a},2a+\frac{b}{a}]
\ees
and $|I|=4a$.
Since $1/G(z)$ is a conformal map of $\Omega$ onto $\C^*\setminus \DD$,
we conclude that $a=\Ca(\widetilde A)$, where 
$\Ca(E)$ denotes the logarithmic capacity of $E$.
If $E$ is a connected set, $\Ca(E)$ is decreased by projecting $E$
onto a line, and increased if $E$ is replaced by a ball containing
$E$. The capacity of an interval is one-quarter of its length and the
capacity of a ball is equal to its radius. 
Thus if $E$ is connected, its capacity is comparable to its
diameter and (\ref{capacity}) follows.
\end{proof}

The next
lemma will be used to bound $|g_j(p_j)-g_j(p)|$.

\begin{lemma} There exist $c_0 <\infty$ so that if $g$ is the
hydrodynamically normalized conformal map of a simply connected domain
$\H\setminus A$ onto $\H$ 
and if $S$ is a connected subset of $\H\setminus A$ then 
\be\label{diam}
\diam g(S) \le c_0 \max\bigl(\diam
S,(\diam{A})^{\frac{1}{2}}(\diam{S})^{\frac{1}{2}}\bigl).
\ee
In particular if $g$ is extended to be the conformal map of 
$\C^*\setminus \widetilde A$ onto $\C^*\setminus I$ 
where $I$ is an interval and ${\widetilde A}=\overline{A\cup A^\R}\cup_j I_j$,
where $A^\R$ is the reflection of $A$ about
$\R$ and $\{I_j\}$ are the bounded intervals in
$\R\setminus\overline{A\cup A^\R}$, then
\be\label{distance}
\dist(g(z),I)\le c_0
\max\bigl(\dist(z,A),\dist(z,A)^{\frac{1}{2}}\diam{A}^{\frac{1}{2}}\bigl),
\ee
\end{lemma}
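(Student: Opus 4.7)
By scaling I may assume $d := \diam A = 1$, so the claimed estimates read $\diam g(S) \le c_0 \max(\diam S, \sqrt{\diam S})$ and $\dist(g(z), I) \le c_0 \max(r, \sqrt{r})$ with $r := \dist(z, A)$. The plan is to prove the distance estimate \eqref{distance} first and then derive the diameter estimate \eqref{diam} from it by integrating $|g'|$ along a path in $S$.

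For \eqref{distance} I would split on the size of $r$. When $r \ge 1$, the Cauchy representation \eqref{cauchy} applied to $f = g^{-1}$ yields $|f(w) - w| \le 2\hcap(A)/\dist(w, I)$ whenever $\dist(w, I) \ge 1$; since $\hcap(A) \le \diam^2(A) = 1$, setting $w = g(z)$ gives $|g(z) - z| \le 2$, whence $\dist(g(z), I) \le 2 + \dist(z, I) \le Cr$ by Proposition \ref{diam1}. When $r < 1$, I would use the conformal invariance of harmonic measure. The hull $\widetilde A$ is connected of diameter $\asymp 1$ and $z$ lies at distance $r$ from it, so a Beurling-type projection estimate adapted to the half-plane gives $\omega(z, \partial_{\H} A, \H \setminus A) \ge 1 - C\sqrt{r}$. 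By conformal invariance $\omega(g(z), I, \H) \ge 1 - C\sqrt{r}$, and the explicit Poisson-kernel formula $\omega(u + iv, [a,b], \H) = (1/\pi)(\arctan((b-u)/v) - \arctan((a-u)/v))$ then forces $\Re g(z) \in I$ (once $r$ is small enough) and $\Im g(z) \le C|I|\sqrt{r} \le C'\sqrt{r}$. Hence $\dist(g(z), I) = \Im g(z) \le C'\sqrt{rd}$.

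For \eqref{diam}, join any two points $z_1, z_2 \in S$ by a path $\gamma \subset S$. Koebe's $1/4$ theorem applied to the disk $D(\zeta, \dist(\zeta, A)) \subset \H \setminus A$ shows that $g(D)$ contains the ball of radius $|g'(\zeta)|\dist(\zeta,A)/4$ about $g(\zeta)$; since this ball lies in $\H \setminus I$, we get $|g'(\zeta)| \le 4\dist(g(\zeta), I)/\dist(\zeta, A)$. Combined with \eqref{distance} this yields $|g'(\zeta)| \le C \max(1, 1/\sqrt{\dist(\zeta, A)})$. Integrating along $\gamma$ and exploiting that $\int_0^s t^{-1/2}\,dt = 2\sqrt{s}$ is finite, we obtain $|g(z_1) - g(z_2)| \le C \max(\diam S, \sqrt{\diam S})$ and hence \eqref{diam}.

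\textbf{The main obstacle} is the half-plane Beurling estimate: the classical Beurling projection theorem is stated for $\C \setminus E$, so I would transfer it by Schwarz reflection across $\R$ to the symmetric domain $\C^* \setminus \widetilde A$ and then account for the contribution from $\R \setminus \widetilde A$ using symmetry to bound $\omega(z, A^\R, \C^* \setminus \widetilde A)$. A secondary subtlety is ensuring that $\int_\gamma \dist(\zeta, A)^{-1/2}|d\zeta|$ remains bounded, which requires choosing $\gamma$ to approach $A$ at a controlled (e.g.\ geometric, radial) rate, analogous to the chain $z_k = a + 2^{-k}(z-a)$ in the slit example where $\dist(g(z_k), I)$ decays like $2^{-k/2}$.
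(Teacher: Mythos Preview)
Your overall structure---prove \eqref{distance} first and then derive \eqref{diam} by bounding $|g'|$ via Koebe and integrating---matches the paper's. However, the heart of your argument for \eqref{distance} in the regime $r<d$ has a genuine gap: the claimed Beurling-type bound
\[
\omega\bigl(z,\partial_{\H}A,\H\setminus A\bigr)\ \ge\ 1-C\sqrt{r}
\]
is simply false. Take $A=[0,i]$ and $z=r+\delta i$ with $\delta\ll r\ll 1$. Then $\dist(z,A)=r$, but with $g(z)=\sqrt{z^2+1}\approx 1+\tfrac{r^2}{2}+r\delta\,i$ and $I=[-1,1]$ one computes
\[
\omega\bigl(z,A,\H\setminus A\bigr)=\omega\bigl(g(z),I,\H\bigr)\ \approx\ \frac{2\delta}{\pi r},
\]
which tends to $0$ as $\delta\to 0$ while $r$ stays fixed. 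The point is that nothing prevents $z$ from being much closer to $\R$ than to $A$; then most of the harmonic measure from $z$ lands on $\R$, not on $A$, regardless of how small $r$ is. (Note the conclusion $\dist(g(z),I)\lesssim\sqrt{r}$ still holds in this example---it is only your harmonic-measure route to it that fails.) The paper sidesteps this entirely by looking at harmonic measure \emph{from $\infty$}: it takes the segment $\sigma$ from $z$ to $A$ and bounds $\omega(\infty,\sigma,\Omega\setminus\sigma)$ from above (Beurling in the domain) and from below (Beurling after mapping to the disk), then converts via $g=G+1/G+b$. Your ``main obstacle'' paragraph correctly identifies that a transfer through the symmetric domain $\C^*\setminus\widetilde A$ is needed, but it does not confront the real difficulty, which is the presence of $\R$ as extra boundary near $z$.

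A second, independent gap is in your derivation of \eqref{diam}. Integrating $|g'|\lesssim \dist(\zeta,A)^{-1/2}$ along an arbitrary path in $S$ does not give $\sqrt{\diam S}$: a connected $S$ need not contain short (or even rectifiable) paths between its points, and there is no ``radial'' path to $A$ available in general. The paper only uses the integration argument when $\dist(S,\partial\Omega)\ge\diam S$ (so the straight segment between any two points of $S$ lies in $\Omega$), and treats the complementary case $\diam S\ge\dist(S,\partial\Omega)$ by a separate argument based on Hall's lemma, controlling both the circular and radial projections of $G(S)$ in the disk. Your ``secondary subtlety'' remark gestures at the problem but does not supply this missing piece.
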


\begin{proof}
We will prove (\ref{distance}), then use it to prove (\ref{diam}).
To prove (\ref{distance}) we may replace $A$, $I$, and $g(z)$ by $cA$,
$cI$, and $cg(z/c)$, so that without loss of generality $|I|=4$ and 
by (\ref{capacity}) $1\le  \diam A \le 4$. 
Fix $z=z_0\in \H$. If $\dist(z_0,A)\ge \diam A$ then
(\ref{distance}) follows from Koebe's estimate and the distortion
theorem. (See \cite{GM}, Corollary I.4.4 and Theorem I.4.5). Suppose
$\dist(z_0,A)<\diam A$ 
and let $\sigma$ be a straight line segment 
from $z_0$ to
$A$ with $|\sigma|=\dist(z_0, A)$. Set 
$\Omega=\C^*\setminus{\widetilde A}$,
set $\varphi(z)=|\sigma|/(z-z_0)$ and let $B=B(z_0,|\sigma|)$ be the ball centered at
$z_0$ with radius $|\sigma|$.  Then
\bes
\omega(\infty,\sigma,\Omega\setminus \sigma)\le \omega(\infty,
B,\Omega\setminus B)=\omega(0,\partial\DD,
\DD\setminus\varphi({\widetilde A})).
\ees
The circular projection of $\varphi({\widetilde A})$ onto $[0,1]$ 
is an interval $[|\sigma|/R,1]$ where $R\ge \frac{1}{2}\diam A$.
By the Beurling projection theorem \cite{GM}, Theorem III.9.2, and an
explicit computation, we obtain
\be\label{est1}
\omega(\infty,\sigma,\Omega\setminus\sigma) \le
\omega(0,\partial\DD,\DD\setminus [|\sigma|/R,1]) \le
\frac{4}{\pi}\tan^{-1}(\sqrt{\frac{|\sigma|}{R}})\le 
c_1\sqrt{|\sigma|}. 
\ee
Let $G$ be the conformal map of $\Omega$ onto $\DD$ with $G(\infty)=0$, 
with positive derivative at $\infty$. Then by
Beurling's projection theorem again,
\bes
\omega(\infty,\sigma,\Omega\setminus\sigma)=
\omega(0,G(\sigma),\DD\setminus G(\sigma))\ge
\omega(0,G(\sigma)^*,\DD\setminus G(\sigma)^*)
\ees
where $E^*$ is the circular projection of a set $E\subset \DD$ onto
$[0,1]$.  Again by an explicit computation
\be\label{est2}
\omega(0,G(\sigma)^*,\DD\setminus G(\sigma)^*)\ge \frac{1-r}{\pi}
\ee
where $r=\inf\{|z|: z\in G(\sigma)^*\}=\inf\{|z|: z\in G(\sigma)\}$.
By Koebe's $\frac{1}{4}$-theorem $r\ge r_0$, where $r_0$ does not depend on $A$.
As in Proposition \ref{diam1}, $g=(G+1/G)+b$, since $a=|I|/4=1$. 
Now if $w=G(z_0)$ and if $\zeta$ is the closest point in
$\partial \DD$ to $w$, set $x=\zeta+1/\zeta+b \in I$.  Then $|w-\zeta|\le 1-r$ and 
\be\label{est3}
|g(z_0)-x| = |w+1/w -(\zeta +1/\zeta)|=|w-\zeta||1-\frac{1}{w\zeta}|
\le 
(1-r)(1+\frac{1}{r_0}).
\ee
Thus by (\ref{est3}), (\ref{est2}), and (\ref{est1}),
\bes
\dist(g(z_0),I)\le c\sqrt{|\sigma|}=c\sqrt{\dist(z_0,A)}
\ees
proving (\ref{distance}).

To prove (\ref{diam}), if $\dist(S,\partial \Omega)\ge \diam S $ 
then for $z\in S$ by the Koebe distortion estimate and
(\ref{distance})
\bes
|g'(z)| \le 4\frac{\dist(g(z),I)}{\dist(z,A)} \le
c_0 \max(1,\bigl(\frac{\diam A}{\dist(z,A)}\bigl)^{\frac{1}{2}}).
\ees
If  $z_1, z_2\in S$, then by integrating $g'$ along the line segment from $z_1$ to $z_2$ (which is
contained in $\Omega$) we obtain
\bes
|g(z_1)-g(z_2)| \le c_0 \max(\diam S ,(\diam A\,\,  \diam S )^{\frac{1}{2}}.
\ees
If $\diam S \ge \dist(S,\partial\Omega)$ then we may rescale as in the
proof of (\ref{distance}) so that $|I|=4$ and $1\le \diam A\le 4$. 
Take $z_1\in S$ so that $\dist(z_1,\partial\Omega) \le \diam S$.  Then $S\subset
B=B(z_1,\diam S)$ and $B\cap \partial\Omega \ne \emptyset$.
As before, let $G:\Omega\to \DD$ with $G(\infty)=0$.
By (\ref{est1}) and (\ref{est2})
\bes
\frac{1-r}{\pi} \le \omega(\infty, B, \Omega\setminus B) \le c_1 \sqrt{\diam S },
\ees
where $r=\inf\{|z|: z\in G(B)\}$.
If $G(B)_*$ denotes the {\it radial} projection of $G(B)$ onto
$\partial \DD$, then by Hall's lemma \cite{DHp} and (\ref{est1})
\bes
|G(B)_*| \le 2\omega(0,G(B), \DD\setminus G(B)\le c_2
\sqrt{\diam S }.
\ees
\begin{figure}[h]
\psfrag{Gs}{$G(S)$}
\psfrag{Gshs}{$G(S)^*$}
\psfrag{Gss*}{$G(S)_*$}
\psfrag{r}{$r$}
\centerline{\includegraphics[height=2.3in]{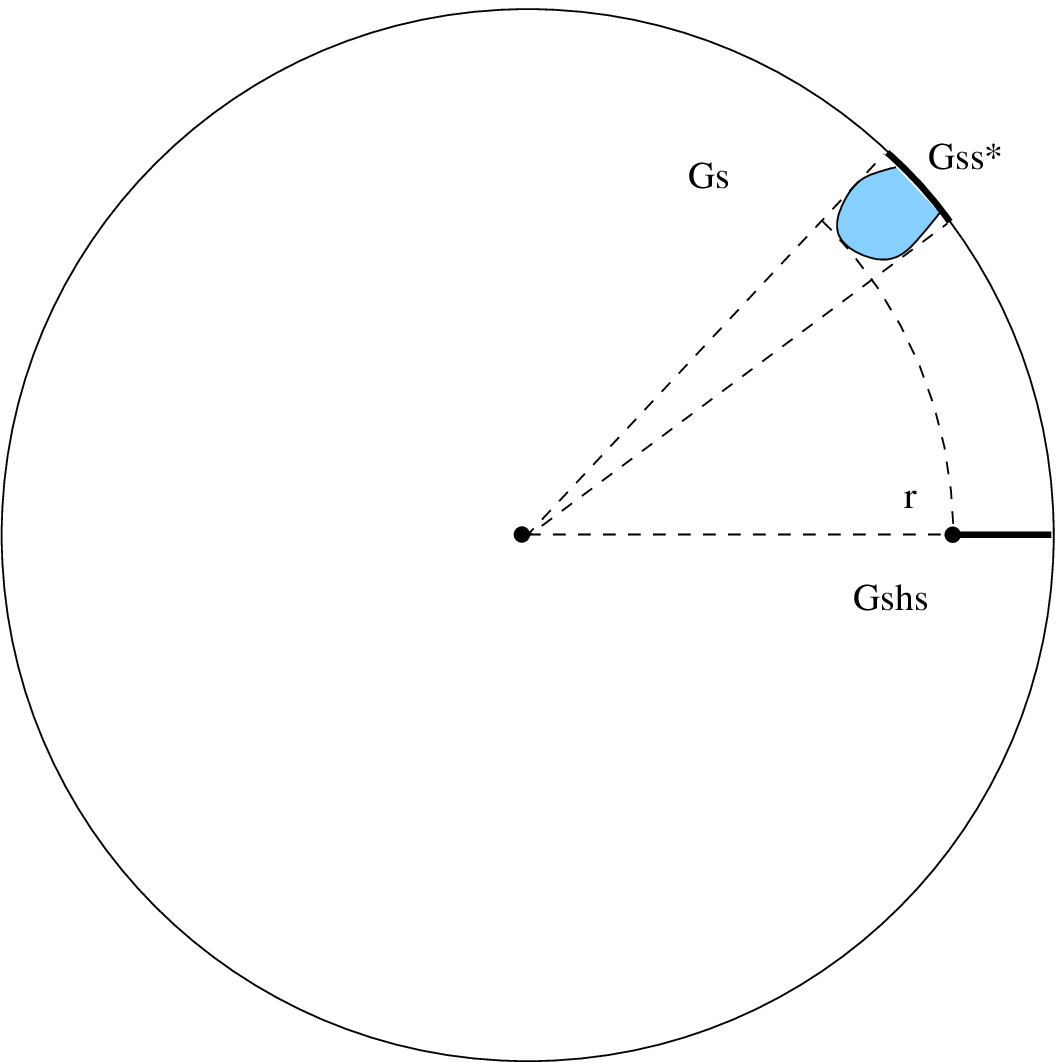}}
\caption{Diameter estimate via projections.}
\end{figure}

Since $G(S)$ is connected and $S\subset B$, we obtain
\bes
\diam G(S)\le c_3 \sqrt{\diam S }.
\ees
If $r>1/4$, this implies (\ref{diam}) . If $r<1/4$ then 
$\diam S  > \diam A \ge 1$ and by (\ref{distance})
\begin{eqnarray}
\diam g(S)&\le&2\sup_{z\in S} \dist(g(z),I) +|I|\\
&\le&c_0\sup_{z\in S}
(\dist(z,A), \dist(z,A)^{\frac{1}{2}})+4\le c_4 \diam S .
\end{eqnarray}
This proves (\ref{diam}).
\end{proof}

\begin{lemma}\label{calculation}
If $|z|>1$ then
\bes
\int_{-1}^1 \frac{dt}{|t-\frac{1}{2}(z+\frac{1}{z})|} =
2\log{\frac{|z|+1}{|z|-1}}=2\rho_{\C^*\setminus
\overline{\DD}}(z,\infty),
\ees
where $\rho_{\Omega}$ denotes the hyperbolic distance in $\Omega$.
\end{lemma}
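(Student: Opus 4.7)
The plan is to handle the two equalities separately. For the hyperbolic distance identity, the map $z\mapsto 1/z$ is a conformal isomorphism of $\C^*\setminus\overline{\DD}$ onto $\DD$ sending $\infty$ to $0$. Pulling the Poincar\'e metric $\frac{2|dw|}{1-|w|^2}$ back yields the density $\frac{2}{|z|^2-1}$. Since this density is rotationally symmetric, the geodesic from $z$ to $\infty$ is radial, and a direct integration gives
\[
\rho_{\C^*\setminus\overline{\DD}}(z,\infty)=\int_{|z|}^{\infty}\frac{2\,dr}{r^2-1}=\log\frac{|z|+1}{|z|-1}.
\]

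For the integral, the key algebraic observation is that when $t=\cos\phi$ with $\phi\in[0,\pi]$, one has the factorization
\[
t-\tfrac12(z+1/z)=-\frac{1}{2z}(z-e^{i\phi})(z-e^{-i\phi})=-\frac{z^2-2z\cos\phi+1}{2z},
\]
since $z^2-2z\cos\phi+1=(z-e^{i\phi})(z-e^{-i\phi})$. Taking absolute values and substituting $t=\cos\phi$, $dt=-\sin\phi\,d\phi$, the integral becomes
\[
\int_{-1}^{1}\frac{dt}{|t-\tfrac12(z+1/z)|}=\int_{0}^{\pi}\frac{2|z|\sin\phi\,d\phi}{|z^2-2z\cos\phi+1|}.
\]

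Writing $z=re^{i\alpha}$ with $r>1$, I would expand $|z^2-2z\cos\phi+1|^2$ as a quadratic in $\cos\phi$ and then substitute $u=2r\cos\phi$, $du=-2r\sin\phi\,d\phi$, to rewrite the integral in the standard form
\[
\int_{-2r}^{2r}\frac{du}{\sqrt{\bigl(u-(r^2+1)\cos\alpha\bigr)^2+(r^2-1)^2\sin^2\alpha}},
\]
which evaluates via $\int\frac{dv}{\sqrt{v^2+c^2}}=\log\bigl(v+\sqrt{v^2+c^2}\bigr)$.

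The main obstacle, and really the heart of the calculation, is the algebraic simplification of the resulting logarithm. Here I would exploit the identities $(r^2+1)^2-(r^2-1)^2=4r^2$ (equivalently, $4r^2+(r^2-1)^2=(r^2+1)^2$) and the geometric identities $r^2+1\pm 2r\cos\alpha=|z\pm 1|^2$. Together these let me show that
\[
\bigl(\pm 2r-(r^2+1)\cos\alpha\bigr)^2+(r^2-1)^2\sin^2\alpha=\bigl((r^2+1)\mp 2r\cos\alpha\bigr)^2,
\]
so both square roots become linear in $\cos\alpha$. The $\cos\alpha$ dependence then cancels between numerator and denominator of the logarithm, leaving $\log\frac{(r+1)^2}{(r-1)^2}=2\log\frac{|z|+1}{|z|-1}$, as required. (A sanity check on the real axis, where $t-w$ has constant sign and the integral is trivially $\log\frac{w+1}{w-1}$ with $w\pm 1=(r\pm 1)^2/(2r)$, confirms the answer.)
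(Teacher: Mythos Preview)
Your proof is correct and, like the paper's, amounts to computing the elementary integral $\int_{-1}^{1}\frac{dt}{\sqrt{(t-\Re w)^2+(\Im w)^2}}$ explicitly and then simplifying at the endpoints. Your passage through $t=\cos\phi$ is a detour: composed with $u=2r\cos\phi$ it is just the linear rescaling $u=2rt$, so you could have gone straight to the $\log\bigl(v+\sqrt{v^2+c^2}\bigr)$ antiderivative. Your endpoint algebra via $(r^2+1)^2-(r^2-1)^2=4r^2$ is a polar-coordinate variant of the paper's simplification; the paper instead uses the identity $(z^{1/2}\pm z^{-1/2})^2=z+1/z\pm 2$ to write $w\pm 1$ as half a perfect square, after which $|1\mp w|+\Re(1\mp w)$ collapses via $|\zeta|^2\mp\Re(\zeta^2)\in\{2(\Im\zeta)^2,2(\Re\zeta)^2\}$ to $(\sqrt{r}\pm 1/\sqrt{r})^2\sin^2(\alpha/2)$, and the $\sin^2(\alpha/2)$ cancels in the ratio just as your $1-\cos\alpha$ does. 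The two routes are equivalent bookkeeping.
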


\begin{proof}
The integral can be computed explicitly and then simplified using
$(z^\frac{1}{2} \pm z^{-\frac{1}{2}})^2= z +1/z \pm 2$.
\end{proof}

The next lemma follows immediately from Lemma \ref{calculation} and the
conformal invariance of the hyperbolic metric.

\begin{cor}\label{cauchy-rho}
If $I\subset \R$ is an interval and $\rho_{\C^*\setminus I}$ is the 
hyperbolic distance in $\C^*\setminus I$
then
\bes
\int_I \frac{dt}{|t-z|} = 2\rho_{\C^*\setminus I}(z,\infty).
\ees
\end{cor}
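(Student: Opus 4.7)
The plan is to reduce the general statement to the special case $I = [-1,1]$, for which Lemma \ref{calculation} applies directly, and then exploit conformal invariance of the hyperbolic metric.

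First I would show that both sides of the asserted identity are invariant under a real affine change of variables. Given any bounded interval $I = [a,b]$, let $L(s) = \frac{a+b}{2} + \frac{b-a}{2}s$, so that $L$ maps $[-1,1]$ onto $I$ and extends to a conformal automorphism of $\C^*$ fixing $\infty$. Writing $z = L(w)$ and substituting $t = L(s)$ in the integral, the Jacobian $|b-a|/2$ cancels the factor $|L(s) - L(w)| = \frac{b-a}{2}|s-w|$ in the denominator, so
\bes
\int_I \frac{dt}{|t-z|} = \int_{-1}^1 \frac{ds}{|s-w|}.
\ees
Conformal invariance of the hyperbolic metric gives $\rho_{\C^*\setminus I}(z,\infty) = \rho_{\C^*\setminus[-1,1]}(w,\infty)$, since $L$ maps $\C^*\setminus[-1,1]$ onto $\C^*\setminus I$ and fixes $\infty$. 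Hence it suffices to verify the identity when $I = [-1,1]$.

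For $I = [-1,1]$, I would use the Joukowski map $\varphi(\zeta) = \frac{1}{2}(\zeta + 1/\zeta)$, which is a conformal map of $\C^*\setminus\overline{\DD}$ onto $\C^*\setminus[-1,1]$ fixing $\infty$. Given any $w \in \C^*\setminus[-1,1]$, there is a unique $\zeta$ with $|\zeta|>1$ such that $w = \varphi(\zeta)$. Lemma \ref{calculation} then yields
\bes
\int_{-1}^1 \frac{ds}{|s-w|} = \int_{-1}^1 \frac{ds}{|s - \frac{1}{2}(\zeta+1/\zeta)|} = 2\rho_{\C^*\setminus\overline{\DD}}(\zeta,\infty),
\ees
and one more application of conformal invariance (using $\varphi$) identifies the right-hand side with $2\rho_{\C^*\setminus[-1,1]}(w,\infty)$, completing the proof.

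There is no real obstacle here; the only minor bookkeeping is checking that the affine reduction respects both the integral and the hyperbolic distance simultaneously, and that one is allowed to invert the Joukowski map at every $w \in \C^*\setminus[-1,1]$ to a unique $\zeta$ in the exterior disk. For the unbounded case $I = \R$, both sides are trivially infinite, and for a half-infinite interval the statement can either be excluded or handled by taking a limit $b \to \infty$, with the hyperbolic distance tending to $0$ as the excluded set becomes a half-line.
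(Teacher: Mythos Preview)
Your argument is correct and is exactly the elaboration of the paper's one-line proof: reduce by an affine map to $I=[-1,1]$, then apply Lemma~\ref{calculation} via the Joukowski map together with conformal invariance of the hyperbolic metric. The only slip is in your closing aside: for a half-infinite interval $\infty$ becomes a boundary point of $\C^*\setminus I$, so both the integral and the hyperbolic distance diverge rather than tend to $0$; in any case the corollary is only applied in the paper to bounded intervals.
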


\bigskip

\begin{lemma}\label{AsubsetB}  Suppose $A\subset B$ are hulls such that
$\dist(\zeta,A) < \eps< 1$ for all $\zeta\in B$.
Let $g_A$ and $g_B$ be the hydrodynamically normalized conformal maps of
$\H\setminus A$ and $\H\setminus B$ onto $\H$ and let $\rho$ be the
hyperbolic distance from $z$ to $\infty$ in $\Omega=\C^*\setminus
\widetilde B$ 
where $\widetilde B=\overline{B\cup B^R}\cup_j I_j$ and $B^\R$ is the reflection
of $B$ about $\R$ and $\{I_j\}$ are the bounded intervals in
$\R\setminus \overline{B\cup B^\R}$. Then for $z\in \H\setminus B$ and $0 < \eps < \diam{A}$
\bes
|g_A(z)-g_B(z)|\le c_0 (\diam A)^{\frac{1}{2}}\rho\, \eps^{\frac{1}{2}}
\ees
where $c_0$ is the constant in (\ref{distance}). 
\end{lemma}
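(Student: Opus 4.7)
The plan is to exploit the concatenation structure hidden in $A\subset B$. Set $B':=g_A(B\setminus A)$; this is itself a hull in $\H$. By Schwarz reflection the image of the full reflected hull is $g_A(\widetilde B)=I_A\cup g_A(\widetilde B\setminus\widetilde A)$, and one checks that this equals $\widetilde{B'}$, so $g_A$ restricts to a conformal bijection $\C^*\setminus\widetilde B\to\C^*\setminus\widetilde{B'}$. Uniqueness of the hydrodynamically normalized conformal map then gives $g_B=g_{B'}\circ g_A$, and comparing images of these conformal representations yields the crucial identity $I_B=I_{B'}$.

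Given this, fix $z\in \H\setminus B$ and set $w:=g_A(z)$, $v:=g_B(z)=g_{B'}(w)$. Then $|g_A(z)-g_B(z)|=|w-v|$. I would then apply the Cauchy-type representation \eqref{cauchy} to the hull $B'$ at $v\in \H\setminus B'$:
\[
w-v=f_{B'}(v)-v=\int_{I_{B'}}\frac{\Im f_{B'}(x)}{x-v}\,\frac{dx}{\pi},
\]
and estimate this as the product of an $L^\infty$ bound on $\Im f_{B'}(x)$ and the integral $\int_{I_{B'}}dx/|x-v|$.

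For the sup estimate, for $x\in I_{B'}$, $f_{B'}(x)\in\overline{B'}\cap\overline\H$, so $\Im f_{B'}(x)\le\sup_{\zeta\in B'}\Im\zeta$. Each $\zeta\in B'$ has the form $g_A(\zeta')$ for some $\zeta'\in B\setminus A$ with $\dist(\zeta',A)<\eps<\diam A$, so \eqref{distance} applied to $g_A$ gives $\dist(\zeta,I_A)\le c_0(\diam A)^{1/2}\eps^{1/2}$; since $I_A\subset\R$, this forces $\Im\zeta\le c_0(\diam A)^{1/2}\eps^{1/2}$, and hence the same bound for $\Im f_{B'}(x)$. For the integral, Corollary \ref{cauchy-rho} gives $\int_{I_{B'}}dx/|x-v|=2\rho_{\C^*\setminus I_{B'}}(v,\infty)$, and using $I_B=I_{B'}$ together with the conformal invariance of the hyperbolic metric under $g_B:\C^*\setminus\widetilde B\to\C^*\setminus I_B$ (which sends $z\mapsto v$ and $\infty\mapsto\infty$) this converts to $2\rho$. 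Multiplying the two pieces gives the claimed bound, after absorbing the numerical factor $2/\pi$ into $c_0$.

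The step I expect to be the main obstacle is the concatenation identity $I_B=I_{B'}$: one must verify carefully that the ``residual'' set $\widetilde{B'}=I_A\cup g_A(\widetilde B\setminus\widetilde A)$ really is the Schwarz-completion of $B'\cup(B')^\R$ by bounded intervals, so that the hydrodynamically normalized map off $\widetilde{B'}$ is $g_{B'}$. A secondary technicality is that \eqref{cauchy} presupposes continuity of $f_{B'}$ on $\overline\H$; in full generality this is handled by approximating $B$ by slightly enlarged hulls with smooth boundary and passing to the limit.
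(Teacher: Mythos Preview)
Your proposal is correct and is essentially the paper's own argument, just with extra scaffolding. The paper applies \eqref{cauchy} directly to $f=g_A\circ g_B^{-1}$ (which is exactly your $f_{B'}$), integrates over $I_B$, bounds $\Im f$ on $I_B$ via \eqref{distance}, and then invokes Corollary \ref{cauchy-rho} together with conformal invariance under $g_B$ to identify the integral with $2\rho$. Your detour through the auxiliary hull $B'=g_A(B\setminus A)$ and the identity $I_B=I_{B'}$ is not needed: since $\Im f\ge 0$ vanishes outside $I_B$, one may simply integrate over $I_B$ and apply Corollary \ref{cauchy-rho} with that interval, so the step you flagged as ``the main obstacle'' evaporates.
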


\begin{proof}
By (\ref{distance}), for $z\in \H\setminus A$
\bes
\Im g_A(z) \le c_0 (\diam A)^{\frac{1}{2}} \eps^{\frac{1}{2}}.
\ees
Let $I_B \subset \R$ denote the interval corresponding to $g_B(B)$ and set
$w=g_B(z)$, for $z\in \H\setminus B$.  
Then by (\ref{cauchy}) applied to $f=g_A\circ g_B^{-1}$ 
\bes
|g_A(z)-g_B(z)|=|g_A\circ g_B^{-1}(w)-w|=
\bigl|\frac{1}{\pi}\int_{I_B}\frac{\Im f(x)}{x-w}dx\bigl|
\le c_0 (\diam A)^\frac{1}{2} \eps^\frac{1}{2} 
\frac{1}{\pi}\int_{I_B} \frac{dx}{|x-w|}.
\ees
By Corollary \ref{cauchy-rho} 
and the conformal invariance of the hyperbolic metric
\bes
|g_A(z)-g_B(z)| \le  \frac{2c_0}{\pi} (\diam A)^{\frac{1}{2}}\rho\, \eps^{\frac{1}{2}}.
\ees
\end{proof}

\begin{proof}[Proof of Theorem \ref{nearby}]
Let $g_B$ be the hydrodynamically normalized conformal map of $\H \setminus
B$ onto $\H$. Then
\bes
|g_1(p_1)-g_2(p_2)|\le
|g_1(p_1)-g_1(p)|+|g_1(p)-g_B(p)|+|g_B(p)-g_2(p)|+|g_2(p)-g_2(p_2)|
\ees
The desired inequality for the first and last terms follows from 
(\ref{diam}) since $\diam \sigma_j \le c\eps<\diam A_j$.
The inequality for the second and third terms follows from Lemma \ref{AsubsetB}.
\end{proof}

In some circumstances it is preferable to use the hyperbolic metric in
$\Omega_1=\C^*\setminus \widetilde A_1$ instead of $\Omega=\C^*\setminus
\widetilde B$. The next lemma says that if $B$ is sufficiently close to $A_1$, then we
can do so. 

\begin{lemma}
If $\rho_{\Omega_1}(\infty,B) \ge c + \rho_{\Omega_1}(\infty,z)$ for
some $c>0$, then
\bes
\rho_{\Omega}(\infty,z)\le \rho_{\Omega_1}(\infty,z) +\log \frac{1}{1-e^{-c}}.
\ees
\end{lemma}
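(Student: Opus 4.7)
The plan is to reduce to the canonical exterior disk $\widehat{\C}\setminus\overline{\DD}$, where the hyperbolic metric was computed in Lemma~\ref{calculation}, and then carry out a one-line algebraic comparison. Choose a conformal equivalence $\phi:\Omega_1\to\widehat{\C}\setminus\overline{\DD}$ with $\phi(\infty)=\infty$, set $w_0=\phi(z)$ with $r=|w_0|$, and let $E=\phi(\widetilde B\setminus\widetilde A_1)$ be the image of the ``extra'' boundary, so that $\phi(\Omega)=(\widehat{\C}\setminus\overline{\DD})\setminus E$. By conformal invariance of the hyperbolic metric it is enough to prove the desired inequality in the exterior-disk model.

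Since $\rho_{\widehat{\C}\setminus\overline{\DD}}(\infty,w)=\log\frac{|w|+1}{|w|-1}$ is strictly decreasing in $|w|$, the hypothesis (interpreted as a lower bound on $\rho_{\Omega_1}(\infty,\cdot)$ over the portion of $\partial\Omega$ lying in $\Omega_1$, i.e.\ over $\widetilde B\setminus\widetilde A_1$) translates into
$$E\subset\{1<|w|\le R\},\qquad\text{where }\frac{R-1}{R+1}=e^{-c}\frac{r-1}{r+1},$$
and in particular $r>R>1$. Consequently $\{|w|>R\}\subset(\widehat{\C}\setminus\overline{\DD})\setminus E$, and monotonicity of the hyperbolic metric under domain inclusion gives
$$\rho_{(\widehat{\C}\setminus\overline{\DD})\setminus E}(\infty,w_0)\ \le\ \rho_{\{|w|>R\}}(\infty,w_0)\ =\ \log\frac{r+R}{r-R},$$
the last equality being read off from the Möbius uniformization $w\mapsto R/w$ of $\{|w|>R\}$ onto $\DD$.

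What remains is to compare $\frac{r+R}{r-R}$ with $\frac{r+1}{r-1}$. With the substitution $x=\frac{r-1}{r+1}$, $y=\frac{R-1}{R+1}=e^{-c}x$, a short computation expanding $r\pm R$ in a common denominator gives
$$\frac{r+R}{r-R}\ =\ \frac{1-xy}{x-y}\ =\ \frac{1-xy}{x(1-e^{-c})}\ \le\ \frac{1}{x(1-e^{-c})}\ =\ \frac{r+1}{r-1}\cdot\frac{1}{1-e^{-c}},$$
and taking logarithms finishes the proof. There is no serious obstacle here: the entire point is that in the exterior disk both the distance to $\infty$ and its level sets (the circles $|w|=\mathrm{const}$) are so explicit that the estimate collapses to a one-line algebraic identity; the only mild subtlety is fixing the natural reading of $\rho_{\Omega_1}(\infty,B)$ as the hyperbolic distance from $\infty$ to the boundary of $\Omega$ inside $\Omega_1$.
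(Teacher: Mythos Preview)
Your proof is correct and follows precisely the approach the paper indicates: the paper's own proof is the one-line hint ``Transfer the metric on $\Omega_1$ to the disk and use the explicit form for the metric there,'' and you have carried out exactly these details (in the equivalent exterior-disk model), including the algebraic verification the paper omits. Your clarification of the meaning of $\rho_{\Omega_1}(\infty,B)$ is also the intended one.
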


\begin{proof} Transfer the metric on $\Omega_1$ to the disk and use the explicit
form for the metric there.
\end{proof}

\bigskip

We would like to end this section by considering this question: if two curves are close together, were they  generated in approximately the same amount of time?
In other words, are their half-plane capacities close?
The lemma below addresses this.

\begin{lemma}\label{timesclose}
Suppose $A_1$, $A_2$ are hulls with
$\diam A_j \le 1$.
Suppose there exist a hull 
$B\supset A_1\cup A_2$  such that
\bes
\dist(\zeta,A_1) < \eps \hfil\text{ and }\hfil \dist(\zeta,A_2) <\eps
\hfil\text{ for all } \zeta\in B.
\ees
Then
$$|\hcap A_1 - \hcap A_2|\leq \frac{4}{\pi}c_0\, \eps^\frac{1}{2}.$$
\end{lemma}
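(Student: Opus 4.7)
The plan is to apply Lemma~\ref{AsubsetB} to each pair $(A_j,B)$ and extract the half-plane capacities by comparing $1/z$-asymptotics at infinity. Since $\diam A_j\le 1$, Lemma~\ref{AsubsetB} gives
\[
|g_{A_j}(z)-g_B(z)|\le c_0\,(\diam A_j)^{1/2}\,\rho(z)\,\eps^{1/2},\qquad z\in\H\setminus B,
\]
where $\rho(z)=\rho_{\C^*\setminus\widetilde B}(z,\infty)$. On the other hand, the hydrodynamic normalizations $g_A(z)=z+2\hcap(A)/z+O(1/z^2)$ force
\[
z(g_{A_j}(z)-g_B(z))\longrightarrow 2(\hcap A_j-\hcap B)\quad\text{as }z\to\infty.
\]

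To turn this into the stated estimate I would use Corollary~\ref{cauchy-rho}: writing out $\int_{I_B}dx/|x-z|$ and expanding $1/(x-z)$ in powers of $1/z$ for large $|z|$, one obtains
$\int_{I_B}dx/|x-z|=|I_B|/|z|+O(1/|z|^2)$, so that $|z|\rho(z)\to |I_B|/2$ as $z\to\infty$. Proposition~\ref{diam1} then bounds $|I_B|\le 4\diam B$, and since every $\zeta\in B$ lies within $\eps$ of $A_j$ we have $\diam B\le\diam A_j+2\eps\le 1+2\eps$. Putting these together yields, for each $j$,
\[
|\hcap A_j-\hcap B|\le \tfrac{2}{\pi}\,c_0\,\eps^{1/2}(1+O(\eps)),
\]
and the triangle inequality $|\hcap A_1-\hcap A_2|\le |\hcap A_1-\hcap B|+|\hcap B-\hcap A_2|$ delivers the stated bound $\tfrac{4}{\pi}c_0\,\eps^{1/2}$.

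The asymptotic $|z|\rho(z)\to |I_B|/2$ is the decisive technical point, but it is an immediate consequence of Corollary~\ref{cauchy-rho} and the Laurent expansion of $(x-z)^{-1}$ about $z=\infty$. The principal obstacle is the numerical bookkeeping: to recover exactly $\tfrac{4}{\pi}c_0$ rather than a larger multiple, one must retain the sharper $\tfrac{2c_0}{\pi}$ form of the estimate appearing inside the proof of Lemma~\ref{AsubsetB} (where the $\tfrac{2}{\pi}$ from Corollary~\ref{cauchy-rho} is exposed), and the $O(\eps)$ error from $\diam B\le \diam A_j+2\eps$ must be absorbed into the leading term; for $\eps$ small this is automatic, and for $\eps$ of order one the inequality is easily verified directly since both sides are bounded below by a constant.
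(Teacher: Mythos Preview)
Your argument is correct, but it takes a more circuitous route than the paper's. The paper proceeds directly: with $f=g_{A_1}\circ g_B^{-1}$, the distance estimate (\ref{distance}) gives $\Im f(x)\le c_0\eps^{1/2}$ for $x\in I_B$ (since $g_B^{-1}(x)\in\overline B$ lies within $\eps$ of $A_1$ and $\diam A_1\le 1$), and then the integral formula (\ref{2d}) immediately yields
\[
2(\hcap B-\hcap A_1)=\frac{1}{\pi}\int_{I_B}\Im f(x)\,dx\;\le\;\frac{|I_B|}{\pi}\,c_0\eps^{1/2}.
\]
Your route instead packages these same two ingredients into the pointwise estimate of Lemma~\ref{AsubsetB}, and then has to unpack them again by multiplying by $|z|$, sending $z\to\infty$, and computing the asymptotic $|z|\rho(z)\to|I_B|/2$ via Corollary~\ref{cauchy-rho}. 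This is valid---the limits on both sides exist, so the inequality survives---but the limiting step and the hyperbolic-distance asymptotics are extra machinery that the direct integral argument sidesteps entirely. You are right that the sharper constant $\tfrac{2c_0}{\pi}$ from inside the proof of Lemma~\ref{AsubsetB} is needed to recover the stated $\tfrac{4}{\pi}c_0$; both your approach and the paper's ultimately face the same bookkeeping with $|I_B|\le 4\diam B\le 4(1+2\eps)$, which you acknowledge.
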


\begin{proof}
Set $t_3=\hcap{B}-\hcap{A_1}>0.$
By (\ref{distance})
$|\Im g_{A_1}(z)|\le c_0\eps^\frac{1}{2}$ for $z\in B$.
By (\ref{2d}) applied to $f=g_{A_1}\circ g_B^{-1}$ we conclude
\bes
t_3 \le \frac{2}{\pi} c_0 \eps^\frac{1}{2}.
\ees
The same argument applies to 
$t_4=\hcap B - \hcap A_2$, and thus
\bes
|\hcap A_1-\hcap A_2|\le \frac{4}{\pi}c_0\, \eps^\frac{1}{2}.
\ees
\end{proof}

\section{The spiral}\label{s:spiral}

We have noticed in Proposition \ref{p:similar} that 
self-similar curves are driven by $\kappa\sqrt{1-t}.$  
We will first generalize this by proving that curves
which are ``asymptotically self-similar'' have driving terms
asymptotic to $\kappa\sqrt{1-t}.$ Then we will show that certain
spirals are asymptotically self-similar.

\subsection{Driving terms of asymptotically self-similar curves}

Let $\gamma^{(n)}:[0,1)\to\overline \H$ and $\gamma$ be Loewner traces
parametrized by half-plane capacity, with driving terms $\lambda^{(n)}$ and
$\l$.
We say that $\gamma^{(n)}$ converges to $\gamma$ in the Loewner topology and write
$\gamma^{(n)} \leadsto \gamma$
if for each $0<t<1$ we have
$$\sup_{0\leq \tau \leq t} |\lambda^{(n)}(\tau) -\lambda(\tau)| \to 0 \quad \text{as}\quad n\to\infty.$$

Fix $\kappa\in\R$ and let $\gamma^\kappa$ be the self-similar curve constructed
in Section \ref{s:self-similar},
driven by $\lambda^\kappa(t) = \kappa\sqrt{1-t}.$ We will first show that if
$\gamma:[0,1)\to\H$ is such that 
the renormalized curves $\gamma_T$, translated so as to start at
$\kappa,$ converge to  $\gamma^\kappa$ in the Loewner topology, then
$\lambda$ behaves like $\lambda^\kappa$ near $t=1.$ 

\begin{prop}\label{p:similar2} If $\gamma_T - \gamma_T(0)+\kappa
\leadsto \gamma^\kappa$ as $T\to 1,$ then
$\lambda$ has a continuous extension to $[0,1],$ and
$$ \lim_{t \rightarrow 1}
\frac{\lambda(t)-\lambda(1)}{\sqrt{1-t}}=\kappa.$$
\end{prop}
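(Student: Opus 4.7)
The plan is to reduce the Loewner-topology convergence hypothesis to an asymptotic bound on $\mu(t):=\lambda(t)-\kappa\sqrt{1-t}$, and then iterate that bound geometrically on intervals shrinking to $t=1$.

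First I would unwind the hypothesis. The renormalized curve $\gamma_T-\gamma_T(0)+\kappa$ is driven by $\tau\mapsto\lambda_T(\tau)-\lambda_T(0)+\kappa$, where $\lambda_T(\tau)=\lambda(T+\tau(1-T))/\sqrt{1-T}$ by \eqref{renorml}, and $\lambda^\kappa(\tau)=\kappa\sqrt{1-\tau}$. Thus $\gamma_T-\gamma_T(0)+\kappa\leadsto\gamma^\kappa$ says exactly that, for every fixed $t_0\in(0,1)$,
$$\sup_{\tau\in[0,t_0]}\Bigl|\lambda_T(\tau)-\lambda_T(0)-\kappa\bigl(\sqrt{1-\tau}-1\bigr)\Bigr|\longrightarrow 0\qquad(T\to 1).$$
Setting $t'=T+\tau(1-T)$ and multiplying through by $\sqrt{1-T}$, this translates to
$$\sup_{t'\in[T,\,T+t_0(1-T)]}\bigl|\mu(t')-\mu(T)\bigr|=o\bigl(\sqrt{1-T}\bigr)\qquad(T\to 1),$$
i.e., for every $\varepsilon>0$ there is $T^\ast<1$ with $|\mu(t')-\mu(T)|<\varepsilon\sqrt{1-T}$ whenever $T>T^\ast$ and $t'\in[T,T+t_0(1-T)]$.

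Next I would fix once and for all some $t_0\in(0,1)$ (say $t_0=\tfrac12$) and iterate. Starting from $T>T^\ast$, define $T_{n+1}=T_n+t_0(1-T_n)$, so that $1-T_n=(1-t_0)^n(1-T)$. The uniform estimate above applies on each interval $[T_n,T_{n+1}]$, giving
$$|\mu(T_{n+1})-\mu(T_n)|<\varepsilon\sqrt{1-T_n}=\varepsilon(1-t_0)^{n/2}\sqrt{1-T}.$$
Summing the geometric series and using the same bound on partial intervals $[T_n,t']$ yields
$$|\mu(t')-\mu(T)|\le C_\varepsilon\sqrt{1-T}\qquad\text{for all }t'\in[T,1),$$
with $C=\bigl(1-\sqrt{1-t_0}\bigr)^{-1}+1$. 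In particular $\mu$ is Cauchy as $t'\to1^-$, so $\mu(1):=\lim_{t'\to 1}\mu(t')$ exists and $|\mu(T)-\mu(1)|\le C\varepsilon\sqrt{1-T}$ for $T>T^\ast$. Since $\kappa\sqrt{1-t}\to 0$, this also provides the continuous extension of $\lambda$ to $[0,1]$ with $\lambda(1)=\mu(1)$.

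Finally, since $\varepsilon>0$ was arbitrary, $\mu(T)-\mu(1)=o(\sqrt{1-T})$, and hence
$$\frac{\lambda(T)-\lambda(1)}{\sqrt{1-T}}=\frac{\mu(T)-\mu(1)}{\sqrt{1-T}}+\kappa\longrightarrow\kappa,$$
as required. The only delicate point is the bookkeeping in the iteration: the hypothesis controls oscillation on $[T,T+t_0(1-T)]$ with error $o(\sqrt{1-T})$ whose implied $o(1)$ is \emph{uniform} for $T>T^\ast$, which is exactly what makes the geometric sum of the increments over $\{T_n\}$ legitimate and keeps $C_\varepsilon$ proportional to $\varepsilon$. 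Everything else is a routine telescoping argument.
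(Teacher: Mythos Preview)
Your proof is correct and follows essentially the same route as the paper's: both translate the Loewner-topology hypothesis into a bound on the driving terms, iterate along a geometrically shrinking sequence $T_n$ (the paper uses $1-T_n=a^n$, you use $1-T_n=(1-t_0)^n(1-T)$), and sum the resulting geometric series to obtain the Cauchy property and the asymptotic. Your device of setting $\mu(t)=\lambda(t)-\kappa\sqrt{1-t}$ and observing that the hypothesis becomes $\mu(t')-\mu(T)=o(\sqrt{1-T})$ uniformly on $[T,T+t_0(1-T)]$ is a tidy repackaging that slightly streamlines the bookkeeping, but the underlying argument is the same telescoping estimate as in the paper.
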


\begin{proof} 
Fix $a<1$ and set $T_n = 1-a^n.$ 
By (\ref{renorml}), $\gamma_T - \gamma_T(0)+\kappa$ is driven by
$$\phi_T(t) = \lambda_T(t) - \gamma_T(0)+\kappa =
\frac{\lambda(T+t(1-T)) - \lambda(T)}{\sqrt{1-T}}+\kappa.$$
By assumption, given $\eps>0$, there is an $n_0<\infty$ so that if
$n\ge n_0$ and $T_n\le t'\le T_{n+1}$ then
\bes
\biggl|\frac{\l(t')-\l(T_n)}{\sqrt{1-T_n}} +
\kappa-\kappa\sqrt{1-t}\biggl|< \eps,
\ees
where $t'=T_n+t(1-T_n)$ for some $0\le t\le 1-a$.
Thus
\be\label{tprime}
|\l(t')-\l(T_n)+\kappa(1-\sqrt{1-t})a^{\frac{n}{2}}| < \eps
a^{\frac{n}{2}}.
\ee
In particular if $t'=T_{n+1}$ then
\bes
|\l(T_{n+1})-\l(T_n)+\kappa (1-\sqrt{a})a^{\frac{n}{2}}|< \eps
a^{\frac{n}{2}},
\ees so that for $m > n \ge n_0$, by addition of these inequalities,
\bes
|\l(T_m)-\l(T_n)+ \kappa a^{\frac{n}{2}}(1-a^{\frac{m-n}{2}})| < \eps
a^{\frac{n}{2}}\frac{1-a^{\frac{m-n}{2}}}{1-\sqrt{a}}.
\ees
Since $a^k \to 0$, as $k\to \infty$, this proves $\{\l(T_m)\}$ is
Cauchy. Set $\l(1)=\lim \l(T_m)$. Then
\be\label{lambda1}
|\l(T_n)-\l(1)-\kappa a^{\frac{n}{2}}| < \eps
\frac{a^{\frac{n}{2}}}{1-\sqrt{a}}.
\ee
Adding (\ref{tprime}) and (\ref{lambda1}) gives
\bes
|\l(t')-\l(1)-\kappa\sqrt{1-t}a^{\frac{n}{2}}| <\eps
a^{\frac{n}{2}}\frac{2}{1-\sqrt{a}},
\ees
for $n \ge n_0$. Since $1-t'=(1-t)(1-T_n)=(1-t)a^n$ we conclude
\bes
\biggl|\frac{\l(t')-\l(1)}{\sqrt{1-t'}}-\kappa\biggl| <
\frac{2\eps}{(1-\sqrt{a})a^{\frac{1}{2}}},
\ees
for $T_n \le t'\le T_{n+1}$ and $n \ge n_0.$ The Proposition follows
by letting $\eps \to 0$.
\end{proof}

%
%Setting $t=1-a$ and $T=T_n$, we thus have 
%$$\lambda(T_{n+1})-\lambda(T_n) = \sqrt{1-T_n} (\phi_{T_n}(1-a)-\kappa).$$
%By assumption, for every $\epsilon>0$ there is $T(\eps)<1$ such that
%$$|\phi_T(\tau)-\kappa \sqrt{1-\tau}|\leq \epsilon \quad \text{for
%all} \quad T(\eps)\leq T<1 
%\quad \text{and} \quad 0<\tau<1-\epsilon.$$ 
%In particular, if $T_n\ge T(\eps)$, then
%$\phi_{T_n}(1-a)-\kappa = \phi_{T_n}(1-a)-\kappa\sqrt{a} +
%\kappa\sqrt{a}-\kappa = \kappa (\sqrt{a}-1) (1+o(1))$
%as $\epsilon\to0.$ It follows that
%$$\lambda(T_{n+1})-\lambda(T_n) = a^{n/2} \kappa (\sqrt{a}-1) (1+o(1))$$
%as $n\to\infty.$
%Similarly, we have
%$$|\lambda(t)-\lambda(T_n)| \leq a^{n/2} \kappa (\sqrt{a}-1) (1+o(1))\quad \text{for all} \quad t\in[T_n,T_{n+1}].$$
%In particular,
%$$\sum_{n\geq N} (\lambda(T_{n+1})-\lambda(T_n)) = -\kappa a^{N/2}(1+o(1))$$
%as $N\to\infty$, and we can define
%$$\lambda(1)= \lambda(T_N) + \sum_{n\geq N} (\lambda(T_{n+1})-\lambda(T_n)).$$
%Now let $t<1$ and pick $N$ so that $T_N\leq t< T_{N+1}.$
%Then 
%\begin{equation}
%\begin{aligned}
%\lambda(1)-\lambda(t) &= (\lambda(T_N)-\lambda(t))+ \sum_{n\geq N} (\lambda(T_{n+1})-\lambda(T_n))\\
%&= a^{N/2} \kappa (\sqrt{a}-1) (1+o(1))  -\kappa a^{N/2}(1+o(1)) =
%-\kappa\sqrt{1-t}(1+o(1))
%\end{aligned}
%\end{equation}
%as $t\to1$ and $a\to1.$ The Proposition is proved.
%\end{proof}

\subsection{Examples of asymptotically self-similar curves}\label{s:spiralexamples}

Next, we present a class of examples $\gamma$ that satisfy the assumption of 
Proposition \ref{p:similar2}.
In order to keep the proofs as short and simple as
possible, we will not give the most general definition, but 
restrict ourselves to the discussion of two specific examples.
However, in remarks during the proofs we will emphasize the assumptions that the
proofs really depend upon, allowing the reader to formulate and verify details of 
general conditions.

We first consider an infinite spiral that accumulates towards a given connected
compact set as in Figure \ref{star}. Consider the curve $\nu_0\in\DD$ given by
\begin{equation}\label{spiraldef2}
\nu_0(t) =   t e^{ \frac{i}{t-1}}, 0\leq t<1.
\end{equation}

\begin{remark}\label{r1}
{\rm
Denote $\hat\nu_0(t)$ the point on the ``previous turn'' with same argument as $\nu_0(t)$  
(in formula: $\hat\nu_0(t)=\nu_0(\hat t)$ where $\hat t = 1+1/(2\pi+1/(t-1))$). Notice that the domain
$\DD\setminus \nu_0[0,t]$, translated by $\hat\nu_0(t)$ and dilated by
$\pi i/(\nu_0(t)-\hat\nu_0(t)),$
converges to the slit half-plane $D_0= \H\setminus\{x+ \pi i \, :\, x \leq 0 \}$.
}\end{remark}

Let $A\subset\H$ be compact such that $\C\setminus A$ is
simply connected and
let $f:\DD\to \C\setminus A$ be a conformal map with $f(0)=\infty$.
Replacing $f(z)$ by $f(e^{i\theta_0}z)$ we may choose $t_0$ so that
$f(\nu_0(t_0))\in \R$ and $f(\nu_0(t))\in \H$ for $t>t_0$. 
Then $f(\nu_0(t)),$ $t_0\leq t<1$, parametrizes a curve that begins in
$\R$ and winds around $A$ infinitely often, accumulating at the outer 
boundary of $A.$ For example, Figure \ref{star} was created this way using the
numerical conformal mapping routine ``zipper'' \cite{MR1}.
We will show that this curve satisfies Theorem \ref{t:spiral} with
$\kappa=4$.
To this end, scale this curve so that its half-plane capacity is 1 (that is, consider the
curve $c f\circ\nu_0$ where $c^2 \hcap(f(\nu_0[t_0,1]))=1$), and reparametrize
by half-plane capacity. Call the resulting curve $\nu^A(t),$ and denote $\hat\nu^A(t)$
the point of the previous turn with same ``argument'' (formally, writing 
$\nu^A(t) = c f\circ\nu_0(u(t))$, we have $\hat\nu^A(t) = c
f\circ\nu_0(\widehat{u(t)})$),
where $\widehat{u(t)} $ is defined in Remark \ref{r1}).

\begin{figure}[h]
\psfrag{nuAt}{$\nu^A(t)$}
\psfrag{wnuAt}{$\hat \nu^A(t)$}
\psfrag{phit}{$\phi_t$}
\psfrag{psit}{$\psi_t$}
\psfrag{Lckm}{$k^{-1}$}
\psfrag{gt}{$L_t^{-1}\circ \displaystyle{\frac{g_t}{\sqrt{1-t}}}$}
\psfrag{0}{$0$}
\psfrag{4}{$4$}
\psfrag{2}{$2$}
\psfrag{R}{$R$}
\psfrag{B}{$B$}
\psfrag{mR}{$-R$}
\psfrag{Ht}{$H_t$}
\psfrag{pi}{$\pi$}
\psfrag{Ot}{$O_t$}
\psfrag{mzR}{$|z|=R$}
\psfrag{pi}{$\pi i$}
\psfrag{pinf}{$p_{-\infty}$}
\psfrag{wnu}{$\widetilde \nu$}
\psfrag{ptwnu}{$\psi_t(\widetilde \nu)$}
\centerline{\includegraphics[width=4.25in]{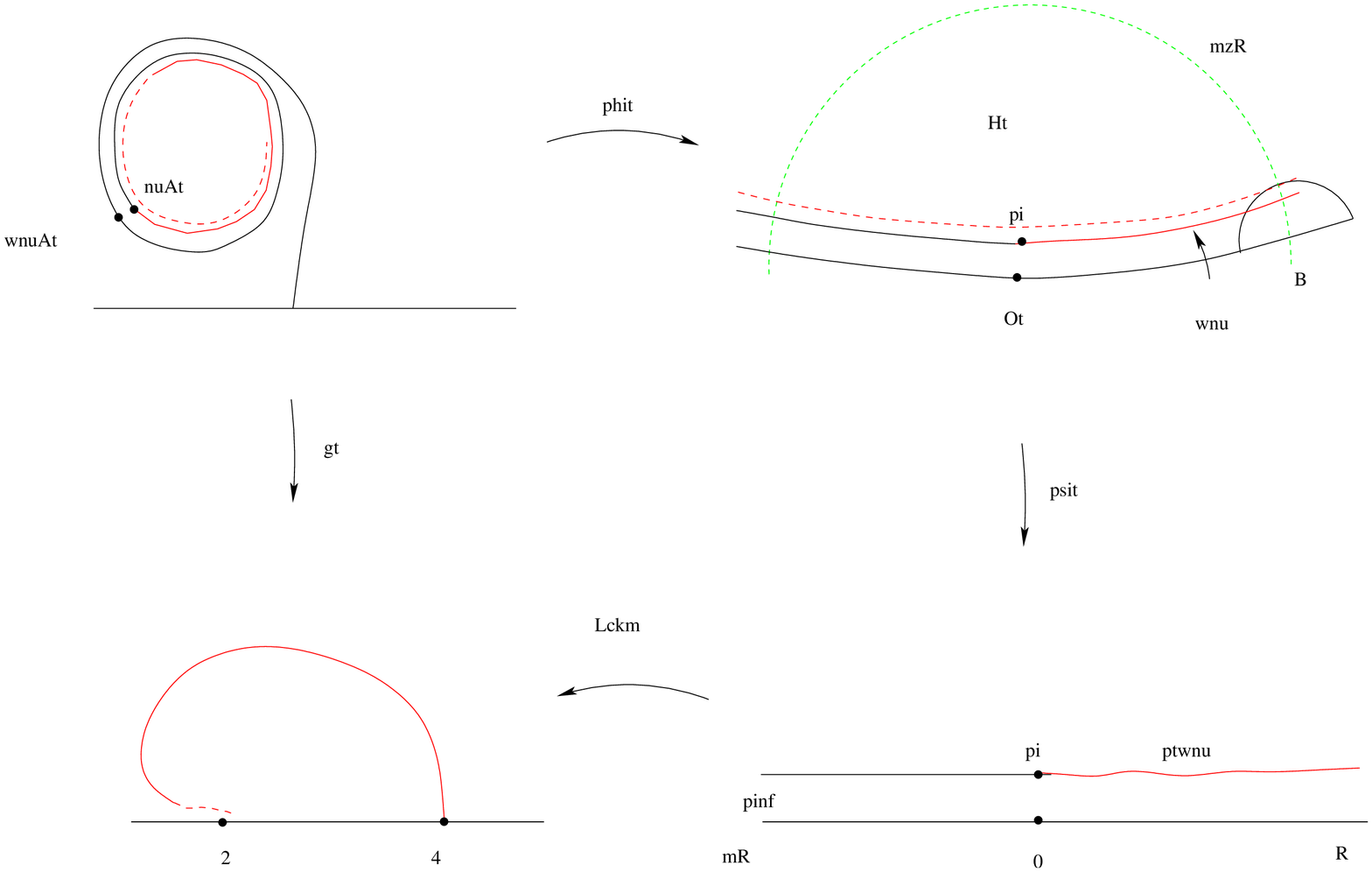}}
\caption{Decomposition of $g_t(z)$.}\label{spiralA}
\end{figure}

\begin{thm}\label{t:spiral driving} The curve $\nu^A$ satisfies the assumption
of Proposition \ref{p:similar2} with $\kappa=4,$ and consequently its driving term satisfies
$$ \lim_{t \rightarrow 1} \frac{\lambda(t)-\lambda(1)}{\sqrt{1-t}}=4.$$
\end{thm}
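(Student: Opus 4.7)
The plan is to verify the hypothesis of Proposition \ref{p:similar2} with $\kappa=4$: namely, that the renormalized and recentered curves $(\nu^A)_T - (\nu^A)_T(0) + 4$ converge in the Loewner topology, as $T\to 1$, to the tangential-intersection trace $\gamma^4$ of Proposition \ref{p:tang}. Combining this convergence with Proposition \ref{p:similar2} immediately yields the claimed asymptotic with $\kappa=4$.

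The geometric input is the affine rescaling of Remark \ref{r1}, transferred through $f$. Writing $\nu^A = c(f\circ\nu_0)\circ u^{-1}$, where $u$ is the reparametrization by half-plane capacity, set
$$A_t(w) = \frac{\pi i}{\nu^A(t) - \hat\nu^A(t)}\bigl(w - \hat\nu^A(t)\bigr).$$
Since $f$ is analytic at the interior point $\nu_0(u(t))$, whose distance to $\partial\DD$ is of order $1-u(t)$, while the scale $|\nu_0(u(t))-\hat\nu_0(u(t))|$ is of order $(1-u(t))^2$, the Koebe distortion estimates show that $f$ is approximated by its affine linearization to arbitrary precision on the disk of radius $|\nu^A(t)-\hat\nu^A(t)|$ about $\nu^A(t)$ as $t\to 1$. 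Combined with Remark \ref{r1}, this implies that the domains $A_t(\H\setminus\nu^A[0,t])$ converge to $D_0=\H\setminus\{x+i\pi:x\le 0\}$ in the Carath\'eodory sense, and that the curves $A_t(\nu^A[t,1))$ converge to the half-line $R_0=\{x+i\pi:x\ge 0\}$, the trace of the $D_0$-model. Composition with $k^{-1}$ and the final hydrodynamic normalization of Proposition \ref{p:tang} then identifies the Carath\'eodory-limit trace of $(\nu^A)_T - (\nu^A)_T(0) + 4$ with $\gamma^4$, whose driving term is $4\sqrt{1-t}$.

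To upgrade this geometric convergence into Loewner-topology convergence of driving terms, fix $t_0<1$ and, for each $s\in[0,t_0]$, apply Theorem \ref{nearby} to the hulls $(\nu^A)_T[0,s] - (\nu^A)_T(0) + 4$ and $\gamma^4[0,s]$ together with a common bounded comparison hull $B$ coming from the Carath\'eodory picture above. Lemma \ref{timesclose} handles the small mismatch in half-plane capacity between the two hulls produced at time $s$. Theorem \ref{nearby} then yields
$$\bigl|\lambda_T(s)-(\nu^A)_T(0)+4 - 4\sqrt{1-s}\bigr|\longrightarrow 0\quad\text{as } T\to 1,$$
uniformly in $s\in[0,t_0]$, which is exactly the required convergence in the Loewner topology.

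The main obstacle I expect is securing the uniformity in $s\in[0,t_0]$ of both the Hausdorff closeness of the renormalized traces to $\gamma^4[0,s]$ and of the comparison hull $B$. Concretely, one must verify that the rescaled curves $A_T(\nu^A[T,T+s(1-T)])$ stay inside a set that is bounded independently of $T$ close to $1$ and of $s\in[0,t_0]$, and that their Hausdorff distance to the corresponding initial portion of $R_0$ is $o(1)$ uniformly in $s$. The Koebe distortion estimate for $f$ near $\nu_0(u(t))$ should give the uniform-in-$s$ rate, since the error of the affine approximation to $f$ decays like $(1-u(t))$ throughout the relevant scale. Once this uniformity is in hand, the quantitative closeness estimates of Section \ref{s:close} convert the geometric convergence into uniform driving-term convergence and Proposition \ref{p:similar2} finishes the argument.
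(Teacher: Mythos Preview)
Your overall strategy matches the paper's: establish Carath\'eodory convergence of the affinely rescaled domains to $D_0$ (this is Lemma~\ref{l:caratheodory}), then convert this to uniform closeness of the renormalized traces to $\gamma^4$, and finally invoke Theorem~\ref{nearby} to obtain convergence of driving terms. However, there is a genuine gap in the middle step.

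The Carath\'eodory convergence you establish concerns the \emph{affine} rescaling $A_t(\H\setminus\nu^A[0,t])\to D_0$. But the renormalized curve $(\nu^A)_T$ is obtained from the \emph{conformal} map $g_T/\sqrt{1-T}$, not from $A_T$. Your sentence ``Composition with $k^{-1}$ and the final hydrodynamic normalization \dots\ then identifies the Carath\'eodory-limit trace of $(\nu^A)_T-(\nu^A)_T(0)+4$ with $\gamma^4$'' conflates these two rescalings. To apply Theorem~\ref{nearby} you need Hausdorff closeness of the hulls $(\nu^A)_T[0,s]-(\nu^A)_T(0)+4$ and $\gamma^4[0,s]$ in $\H$; Carath\'eodory convergence of the affine picture does not give this directly, since Carath\'eodory convergence says nothing about boundary behavior and since $g_T$ differs from $A_T$ by a nontrivial conformal factor.

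The paper closes this gap by writing
\[
\frac{g_T}{\sqrt{1-T}} \;=\; L_T\circ k^{-1}\circ \psi_T\circ \phi_T,
\]
where $\phi_T$ is your affine map $A_T$, $\psi_T:\phi_T(\H\setminus\nu[0,T])\to D_0$ is the conformal correction (normalized at $\pi i$, $O_T$, and $p_{-\infty}$), and $L_T(z)=\alpha_T z+\beta_T$ is linear. Two nontrivial facts are then needed: (i) Lemma~\ref{l:close to identity}, showing $\psi_T$ is uniformly close to the identity on the relevant portion of the trace (this requires a careful prime-end argument, not just Carath\'eodory convergence, because the trace touches $\partial H_T$ at $\pi i$); and (ii) a half-plane capacity argument showing $\alpha_T\to 1$, which uses monotonicity of $\hcap$ and Lemma~\ref{timesclose}. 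Only after these is the uniform closeness of $(\nu^A)_T$ to $\gamma^4$ established, and Theorem~\ref{nearby} can be applied. The uniformity-in-$s$ issue you flag is real but secondary: the paper handles it by showing that $\phi_T(\nu[T,T+t(1-T)])$ stays inside a ball of radius $R(t)$ independent of $T$, again via a capacity comparison. Your Koebe argument for $f$ controls only the affine approximation of the original map $f$, not the Loewner map $g_T$, so it cannot substitute for steps (i) and (ii).
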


Recall the notation of Section \ref{s:tangential}, in particular the
slit half-plane 
$D_0,$ the conformal map $k:\H\to D_0$
and the curve $\gamma=k^{-1}(\{x+ \pi i \, :\, x \geq 0 \}),$ the trace of $4\sqrt{1-t}.$
The key feature of $\nu=\nu^A$ (and therefore the curve $\nu_0$
defined in (\ref{spiraldef2}))
is, roughly speaking, that $\H\setminus \nu[0,t]$ looks like $D_0$ when zooming in at $\nu(t).$
More precisely, we have

\begin{lemma}\label{l:caratheodory}

 For each $t\in[0,1),$ there is a linear map $\phi_t(z) = a_t z + b_t$
such that $\phi_t(\nu(t))=\pi i,$ and such that $\phi_t(\H\setminus \nu[0,t])$ converges to
$D_0$ in the Caratheodory topology (with respect to the point $1+\pi i$, say).
Furthermore, $O_t = \phi_t(\hat\nu(t)) \to 0$ as $t\to1.$
\end{lemma}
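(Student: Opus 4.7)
The plan is to define the affine map
\bes
\phi_t(z) = \frac{\pi i}{\nu^A(t)-\hat\nu^A(t)}\bigl(z - \hat\nu^A(t)\bigr),
\ees
so that $\phi_t(\nu^A(t)) = \pi i$ and $\phi_t(\hat\nu^A(t)) = 0$ hold identically; in particular $O_t = 0$ for every $t$, making the assertion $O_t\to 0$ automatic. What remains is the Caratheodory convergence $\phi_t(\H\setminus\nu^A[0,t])\to D_0$ relative to the basepoint $1+\pi i$.

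The strategy is to transport the model statement in $\DD$ (Remark \ref{r1}) to $\H$ through $f$. Let $s=s(t)$ be the parameter for which $\nu^A(t) = c\,f(\nu_0(s))$ and $\hat\nu^A(t) = c\,f(\hat\nu_0(s))$, and let
\bes
\psi_s^0(z) = \frac{\pi i}{\nu_0(s)-\hat\nu_0(s)}\bigl(z-\hat\nu_0(s)\bigr)
\ees
be the disc analogue of $\phi_t$. By Remark \ref{r1}, $\psi_s^0(\DD\setminus\nu_0[0,s])\to D_0$ in the Caratheodory sense, so it suffices to show that the transfer map $\Phi_s := \phi_t\circ (cf)\circ (\psi_s^0)^{-1}$ converges uniformly to the identity on compact subsets of $D_0$ containing $1+\pi i$.

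For this, a direct computation from \eqref{spiraldef2} gives $\nu_0(s)-\hat\nu_0(s) = (s-\hat s)\,e^{i/(s-1)}$ with $s-\hat s\sim 2\pi(1-s)^2$, whence $|\nu_0(s)-\hat\nu_0(s)|/\dist(\nu_0(s),\partial\DD) = (s-\hat s)/(1-s)\to 0$. Thus any fixed compact $K\subset D_0$ lies in $\psi_s^0\bigl(B(\nu_0(s),\tfrac12(1-s))\bigr)$ for $s$ close to $1$, and on this ball Koebe's distortion theorem yields
\bes
f(z) = f(\nu_0(s)) + f'(\nu_0(s))(z-\nu_0(s))\bigl(1 + O\bigl(|z-\nu_0(s)|/(1-s)\bigr)\bigr).
\ees
Substituting this expansion into the definition of $\Phi_s$ produces, after simplification, $\Phi_s(w) = w + O(1-s)(\pi i - w)$ uniformly on $K$, so $\Phi_s\to\mathrm{id}$ as desired.

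The main obstacle is to check that the two pieces of $\partial(\H\setminus\nu^A[0,t])$ not coming from the disc picture, namely the real line and the set $cA$ bounding $(cf)(\DD)$, escape to infinity under $\phi_t$ and therefore do not contribute to the Caratheodory limit. Since $|a_t|=\pi/|\nu^A(t)-\hat\nu^A(t)|\asymp (c|f'(\nu_0(s))|(1-s)^2)^{-1}$ while Koebe's $\tfrac{1}{4}$-theorem applied to $f$ on $B(\nu_0(s),1-s)$ yields $\dist(\nu^A(t),cA)\gtrsim c|f'(\nu_0(s))|(1-s)$, and $\dist(\nu^A(t),\R)$ stays bounded below (as $A\subset\H$ has positive distance from $\R$), both sets are pushed to distance $\gtrsim (1-s)^{-1}\to\infty$ from $\pi i$ after applying $\phi_t$.
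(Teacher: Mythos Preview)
Your proof is correct and is precisely the approach the paper sketches in one line (``Koebe distortion theorem and Remark~\ref{r1}, using that $|\hat\nu_0(t)-\nu_0(t)|\sim 2\pi(1-t)^2$ is much smaller than $\dist(\nu_0(t),\partial\DD)$''); you have simply supplied the details the paper omits. Your normalization even makes $O_t\equiv 0$, so the final clause of the lemma is trivially satisfied, and the transfer of Carath\'eodory convergence via $\Phi_s\to\mathrm{id}$ together with the observation that $\phi_t(\R)$ and $\phi_t(cA)$ escape to infinity handles the passage from the disc model to $\H\setminus\nu^A[0,t]$.
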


\begin{remark}\label{r2}
{\rm
Our curve $\nu_0$ is sufficiently smooth so that Caratheodory convergence will be enough.
For a general curve, we would need slightly stronger assumptions, see the remarks below.
}
\end{remark}

\begin{proof}
This is an easy consequence of the Koebe distortion theorem and Remark \ref{r1},
using that the distance $|\hat\nu_0(t)-\nu_0(t)|$ between consecutive turns
is asymptotic to $2\pi (1-t)^2$ and therefore much smaller than the distance
from $\nu_0(t)$ to $\partial\DD.$
\end{proof}

Next, let $\psi_t$ denote the conformal map from $H_t=\phi_t(\H\setminus \nu[0,t])$ onto
$D_0,$ normalized such that $\psi_t(\pi i)=\pi i,$  $\psi_t(O_t)=0$, and such that 
$\psi_t(\infty)$ equals the prime end $p_{-\infty}=k(\infty)$ (see Section \ref{s:tangential}).

\begin{lemma}\label{l:close to identity} For each $R>0$ and $\epsilon>0$ there is $t_0<1$ such that
\begin{equation}\label{cti}
|\psi_t(z) - z| < \epsilon \quad \text{for all}\quad z\in \widetilde \nu \cup B
\end{equation}
for $t>t_0$, where $\widetilde \nu$ is the component of  $\phi_t(\nu[t,1))\cap\{|z|<R\}$ containing $\pi i$,
and $B$ is the component of $H_t\cap \{|z-R|<2\pi\}$ containing $R+\pi i.$
\end{lemma}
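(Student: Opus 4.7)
The plan is to apply the Carath\'eodory kernel theorem to obtain locally uniform convergence $\psi_t \to \mathrm{id}$ on $D_0$, and then upgrade this to uniform convergence on $\widetilde\nu \cup B$ using boundary regularity. By Lemma \ref{l:caratheodory}, the domains $H_t$ converge to $D_0$ in the Carath\'eodory topology. The conformal maps $\psi_t : H_t \to D_0$ are uniquely determined by three prime end correspondences, $\psi_t(\pi i) = \pi i$, $\psi_t(O_t) = 0$ (with $O_t \to 0$), and $\psi_t(\infty) = p_{-\infty}$, matching the three real parameters of $\mathrm{Aut}(D_0)$. A standard normal family argument shows that any subsequence of $\{\psi_t\}$ has a further subsequence converging locally uniformly on $D_0$ to some analytic limit; the three boundary normalizations (going to three distinct prime ends) prevent collapse to a constant and force the limit to be the unique conformal self-map of $D_0$ fixing those three prime ends, namely the identity. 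Hence $\psi_t \to \mathrm{id}$ locally uniformly on $D_0$.

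Next I would handle $\widetilde\nu$. For $t$ near $1$, $\widetilde\nu$ is Hausdorff close to the segment $\{x + \pi i : 0 \leq x \leq \sqrt{R^2 - \pi^2}\}$ (taking $R > \pi$; the case $R \leq \pi$ is easier, since then $\widetilde\nu$ is confined to a small neighborhood of $\pi i$ and the argument at the tip below suffices by itself). This segment lies in $\overline{D_0}$ and meets $\partial D_0$ only at the tip $\pi i$. Interior convergence from the kernel theorem covers the portion of $\widetilde\nu$ away from $\pi i$; near $\pi i$, the normalization $\psi_t(\pi i) = \pi i$ together with a uniform modulus of continuity of $\psi_t$ at the tip (a consequence of the local Jordan regularity of $\partial D_0$ and $\partial H_t$, the latter via the smoothness noted in Remark \ref{r2}) yields $|\psi_t(z) - z| < \eps$ throughout $\widetilde\nu$.

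For $B$, this is a bounded open subset of $H_t$ contained in $\{|z-R|<2\pi\}$, whose Hausdorff limit as $t \to 1$ is the corresponding component of $D_0 \cap \{|z-R|<2\pi\}$. The limit set may touch $\partial D_0$ along the slit or along $\R$, but these are Jordan arcs, so $\psi_t$ extends continuously to the closure and the extensions converge uniformly on $\overline B$ by a boundary strengthening of the kernel theorem. The main obstacle is controlling the modulus of continuity of $\psi_t$ up to the boundary, uniformly in $t$, particularly at the tip $\pi i$; this relies on the Jordan regularity of both $\partial D_0$ and $\partial H_t$ together with standard boundary correspondence results (see \cite{P}). Once these uniform boundary estimates are in hand, combining them with the interior kernel convergence gives the desired bound on all of $\widetilde\nu \cup B$.
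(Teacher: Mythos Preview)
Your overall strategy---Carath\'eodory kernel convergence plus boundary normalizations to pin down the limit as the identity---is the right one, and it matches the paper's approach. However, there is a genuine gap in your execution of the first step. You assert that a ``standard normal family argument'' combined with the three prime-end normalizations forces any subsequential limit of $\psi_t$ to be the identity. The difficulty you are skipping is precisely that all three normalizations live on the \emph{boundary}: $\pi i$, $O_t$ (tending to $0$), and the prime end $p_{-\infty}$ are not interior points, so locally uniform convergence on $D_0$ does not by itself tell you that the limit map still sends $\pi i\mapsto\pi i$, $0\mapsto 0$, $p_{-\infty}\mapsto p_{-\infty}$. Establishing that the boundary correspondence is stable under the limit is the real content of the lemma, not a formality.

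The paper handles this by transferring everything to the disk: take Riemann maps $\varphi_t:\DD\to H_t$ and $\varphi_0:\DD\to D_0$ normalized at the interior point $\pi i+1$, so that kernel convergence gives $\varphi_t\to\varphi_0$ honestly. Then $\psi_t=\varphi_0\circ T_t\circ\varphi_t^{-1}$ where $T_t$ is the M\"obius automorphism of $\DD$ sending the three boundary preimages $a_t,b_t,c_t$ (of $\infty$, $O_t$, $\pi i$ under $\varphi_t$) to the corresponding preimages $a,b,c$ under $\varphi_0$. The problem reduces to showing $a_t\to a$, $b_t\to b$, $c_t\to c$, and the paper does this with an extremal-length / harmonic-measure argument (e.g.\ for $a_t\to a$, one uses a vertical crosscut far out in $D_0$ and compares extremal distances in $H_t$ and $D_0$). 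This is the missing ingredient in your proposal.

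Your treatment of $B$ is also too vague. A ``boundary strengthening of the kernel theorem'' is not a standard off-the-shelf statement here, especially since $D_0$ is unbounded and $B$ may abut $\R$. The paper's device is concrete: for $t$ close to $1$, the portion of $\partial H_t$ near $B$ is an interval of $\R$, so one extends $\psi_t$ (or $\psi_t^{-1}$) across that interval by Schwarz reflection. After reflection, the disk $\{|z-R|<2\pi\}$ sits uniformly compactly inside the extended domains, and ordinary interior kernel convergence finishes the job. I recommend you adopt this reflection argument rather than appealing to an unspecified boundary version of the kernel theorem.
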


\begin{proof}
A standard application of Caratheodory convergence, provided by Lemma \ref{l:caratheodory}, 
requires normalization of conformal maps at an interior point (such as $\pi i +1$).
To deal with our situation, denote $H_t=\phi_t(\H\setminus \nu[0,t])$ and
consider the conformal maps 
$\varphi_t:\DD\to H_t$
and $\varphi_0:\DD\to D_0,$ normalized by $\varphi_t(0)=\pi i+1$ and $\varphi_t'(0)>0.$
By Lemma \ref{l:caratheodory} we have $\varphi_t\to\varphi_0$ compactly as $t\to1.$
Denote $a_t,b_t$ and $c_t$ the preimages of $\infty, O_t$ and $\pi i$ under $\varphi_t,$
and denote $a,b,c$ the preimages of $p_{-\infty}, 0$ and $\pi i$ under $\varphi_0$.
Set $T_t=\varphi^{-1}_0\circ\psi_t\circ\varphi_t$ so that $T_t$ is the unique automorphism
of $\DD$ that maps $a_t,b_t,c_t$ to $a,b,c.$ 

It is not hard to see that $a_t\to a, b_t\to b$
and $c_t\to c$ as $t\to1:$ To prove $a_t\to a,$ fix $\rho>0$ large and
consider the vertical line segment 
$A=(-\rho,-\rho+\pi i)\subset D_0$ and notice that $A'=\varphi^{-1}_0(A)$ is a crosscut of $\DD$ of small diameter separating $0$ from $a.$ The extremal distance from $A$ to the boundary arc of $D_0$ 
between $0$ and $\pi i$ containing $\infty$ (that is, the image under $\varphi_0$ of the subarc of 
$\partial\DD$ between $b$ and $c$) is large (it is of the order $e^{\rho\pi}$). By conformal invariance,
the extremal distance between $\psi_t^{-1}(A)$ and the boundary arc between $O_t$ and $\pi i$
(that is, one turn of the spiral) is large, and it follows that the harmonic measure of
$\psi_t^{-1}(A)$ at $\pi i+1$ in $H_t$ is small. In particular,
there is $\rho'\leq\rho$ with $\rho'\to\infty$ as $\rho\to \infty$ ($\rho'=\rho/2$ will do)
such that for $t\geq t_0(\rho)$ 
the component $A_t$ of $(-\rho'+i\R) \cap H_t$ containing $-\rho'+i\pi/2$
separates $\pi i+1$ and $\psi_t^{-1}(A)$ in $H_t$. Hence $\varphi_t^{-1}(A_t)$ separates 
$0$ and $a_t$ in $\DD.$
Denote $\alpha=\varphi^{-1}_0(-\rho-1+i\pi/2)$ so that $\alpha$ is contained in the component of 
$\DD\setminus A'$ containing $a$. Because $\varphi_t(\alpha)\to\varphi_0(\alpha)$ as $t\to 1,$ 
$A_t$ separates $\pi i+1$ and $\varphi_t(\alpha)$ in $H_t$. Consequently, $\alpha$
is also contained in the component of $\DD\setminus \varphi^{-1}_t(A_t)$ containing $a_t$ and
we obtain $|a-a_t|\leq 2 (\diam A' + \diam \varphi^{-1}_t(A_t))$ which can be made arbitrarily
small by choosing $\rho$ large and $t\geq t_0(\rho).$
The convergence $b_t\to b$ and $c_t\to c$ can be proved in a similar fashion, replacing $A$ by 
small circular arcs centered at $0$ and $\pi i.$ We leave the details to the reader. 

It follows that $T_t\to\mathrm{ id}$ uniformly in $\DD.$ 
Hence uniform convergence to $0$ of $\psi_t(z)-z = \varphi_0(T_t(w))-\varphi_t(w)$, writing $w=\varphi^{-1}_t(z)$,
follows from the convergence $\varphi_t\to\varphi_0$ as long as $z$ stays boundedly close to $\pi i+1$ 
in the hyperbolic metrics of $H_t.$ This proves (\ref{cti}) on $\widetilde\nu\setminus \{|z-\pi i|<\delta\},$
for each $\delta>0.$ Because $\diam \varphi_t^{-1}(\{|z-\pi i|<\delta\}) < C\sqrt{\delta},$  
$c_t\in\varphi_t^{-1}(\{|z-\pi i|<\delta\}),$ $|c-T_t(c_t)|$ is small and $\varphi_0$ is continuous near $c,$
(\ref{cti}) also holds on $\widetilde\nu\cap \{|z-\pi i|<\delta\}$ by choosing $\delta$ small.
Finally, (\ref{cti}) on $B$ follows by extending $\psi_t^{-1}$ across the interval $[0,2R]$
using Schwarz reflection, and noticing that $\{|z-R|<2\pi\}$ is uniformly compactly contained in the extended 
domains $H_t$ for $t$ sufficiently large.
\end{proof}

\begin{remark}\label{r3}
{\rm 
For more general curves,
the validity of the conclusion of the previous lemma requires some mild regularity of $\nu$ in addition to the Caratheodory convergence of the rescaled domains $H_t$: Indeed, 
if $\phi_t(\nu(t))=\pi i$ cannot be joined to $\pi i+1$ within $H_t$
by a curve of diameter close to 1, then $\psi_t$ cannot be close to the identity near $\pi i.$
Assuming for instance that the component of 
$H_t\cap D(0,2\pi)$ is a John domain with $\pi i$ in its boundary
is enough to guarantee the conclusion of the lemma on $\widetilde\nu$.
Assuming that $\widetilde\nu$ is a $K(t)-$quasicircle with $K(t)\to1$ as $t\to1$ is enough to guarantee (\ref{cti}) on $B$.
}
\end{remark}

\begin{proof}[Proof of Theorem \ref{t:spiral driving}]
We need to show that the curves $\nu_T=g_T(\nu[T,1))/\sqrt{1-T},$  translated 
so as to start at $\kappa=4$, converge to $\gamma=\gamma^4$ in the Loewner topology as $T\to1.$
To see this, observe that $g_T/\sqrt{1-T}= L_T\circ k^{-1}\circ\psi_T\circ\phi_T$ for some linear
self-map $L_T(z)=\alpha_T z+\beta_T$ of $\H:$ Indeed, the map $k^{-1}\circ\psi_T\circ\phi_T$
is a conformal map from $\H\setminus\nu[0,T]$ onto $\H$ fixing $\infty.$

Next, we claim that $\alpha_T\to 1$ as $T\to1.$ Take $R$ large and consider 
the component $\widetilde \nu=\widetilde \nu(T,R)$ of  $\phi_T(\nu[T,1))\cap\{|z|<R\}$ containing $\pi i$. Assume $T$ is so large that $\phi_T(\nu[T,1))$ intersects $\{|z|=R\}$
(this is possible by Lemma \ref{l:caratheodory}).
Denote $e(T,R)$ the endpoint of $\widetilde\nu$ and let $T'$ be the corresponding time parameter, $\phi_T(\nu(T'))=e(T,R).$ If $T$ is large enough, then the line segment 
$S=S(T,R)=[e(T,R),\phi_T(\hat\nu(T'))]$
joining $e$ to the nearest point of the ``previous turn'' separates infinity from 
$\phi_T(\nu[T',1)).$ By the monotonicity of the half plane capacity, we obtain
$$\hcap  k^{-1}\circ\psi_T(\widetilde \nu) < 
  \hcap \big[ L_T^{-1}\circ g_T(\nu(T,1))/\sqrt{1-T}\big] < \hcap k^{-1}\circ\psi_T(\widetilde \nu\cup S).
$$
By Lemma \ref{l:close to identity} and the continuity of capacity (Lemma \ref{timesclose}) we see
(by letting $R\to\infty$ as $T\to1$) that
$$\hcap  k^{-1}\circ\psi_T(\widetilde \nu)\to 1$$ 
as $T\to1.$ By the subadditivity of $\hcap$ (\cite{La}, Proposition 3.42) and 
$\hcap  k^{-1}\circ\psi_T(S)\to0$, it follows that
$$\frac1{\alpha_T^2} =  \hcap \big[ L_T^{-1}\circ g_T(\nu(T,1))/\sqrt{1-T}\big] \to1.$$

Fix $t<1$. Then there is $R=R(t)$ (independent of $T$) such 
that $\phi_T(\nu[T,T+t(1-T)])\subset \widetilde\nu(T,R)$: Indeed, denote $R(T,t)$ the largest $R$
such that $\widetilde\nu(T,R) \subset \phi_T(\nu[T,T+t(1-T)])$, and assume to the contrary that there
is no upper bound on $R(T,t)$ as $T\to1.$ Then the argument of the previous paragraph shows that 
$$\limsup_{T\to1}\, \hcap  k^{-1}\circ\psi_T(\widetilde \nu(T,R(T,t)) = 1.$$ 
But 
$$\hcap  k^{-1}\circ\psi_T(\widetilde \nu(T,R(T,t))) = \hcap \big[ L_T^{-1}\circ g_T(\nu(T, T+t(1-T)))/\sqrt{1-T}\big] = 
\frac{t}{\alpha_T^2}$$
is bounded away from 1 as $T\to1,$ proving the existence of $R=R(t)$. (A direct estimate gives
that $R(t)$ is comparable to $s=\log1/(1-t)$). 

Thus Lemma \ref{l:close to identity} shows that $\psi_T$ is uniformly close to 
the identity on $\phi_T(\nu[T,T+t(1-T)])$, and it follows that $\nu_T(\tau)-\nu_T(0)+4$
is uniformly close to $\gamma^4(\tau)$, on $\tau\in[0,t].$ Now uniform convergence of 
the driving term of $\nu_T(\tau)-\nu_T(0)+4$ to the driving term of $\gamma^4$ is an easy
consequence of Theorem \ref{nearby}. Indeed, using the notation of
Theorem \ref{nearby}, fix $\tau \le t$ and  let $g_1$ and $g_2$ be the hydrodynamically
normalized conformal maps associated with the curves $\g^4[0,\tau]$
and $\nu_T([0,\tau])-\nu_T(0)+4$. So $g_1$ is equal to  $g_{\tau}$ from
Section \ref{s:tangential}, Tangential intersection.
Let $p_1=\g^4(\tau)$,
$p_2=\nu_T(\tau)-\nu_T(0)+4$, $p=\g^4((1-\eps)\tau+\eps)$ and let
$\sigma_j$ be the line segment from $p_j$ to $p$. The hyperbolic
distance from $\infty$ to $p$ is bounded independent of $\tau$ since
$\tau\le t<1$. 
\end{proof}

\begin{defn}\label{d:smallnorm}
We will say that a driving term $\mu:[0,1)\to\R$ has {\it local Lip 1/2 norm $\leq C$}
if there is $\delta>0$ such that
\begin{equation}\label{e:smallnorm}
|\mu(t)-\mu(t')|\leq C |t-t'|^{1/2} \quad\text{for all}\quad 0\leq t<t'<1 \quad
\text{with} \quad |t-t'|<\delta (1-t).
\end{equation}
We say that $\mu$ has {\it arbitrarily small local Lip 1/2 norm}, if for every $\epsilon>0$,
$\mu$ has local Lip 1/2 norm $\leq \epsilon.$
\end{defn}

\begin{prop}\label{p:sufficient}
If $\nu=\nu^A$ is the spiral constructed in Section \ref{s:spiralexamples}, then its driving term
$\l=\l^A$ has arbitrarily small local Lip 1/2 norm.
\end{prop}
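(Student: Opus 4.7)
Given $\eps>0$, we seek $\delta>0$ such that $|\l(t')-\l(t)|\leq \eps\sqrt{t'-t}$ whenever $0\leq t<t'<1$ and $t'-t<\delta(1-t)$. My plan is to establish the pointwise derivative bound
\bes
\sqrt{1-T}\,|(\l^A)'(T)|\leq M\qquad\text{for all }T\in[0,1),
\ees
with some finite $M$; the proposition will then follow by integration, since for $t'-t<\delta(1-t)$,
\bes
|\l(t')-\l(t)| \leq M\int_t^{t'}\frac{d\tau}{\sqrt{1-\tau}} = 2M(\sqrt{1-t}-\sqrt{1-t'}) \leq \frac{2M(t'-t)}{\sqrt{1-t}} \leq 2M\sqrt{\delta}\,\sqrt{t'-t},
\ees
so the choice $\delta\leq(\eps/(2M))^2$ delivers the required estimate and hence arbitrarily small local Lip-$1/2$ norm.

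On any compact subinterval $[0,1-\eta]\subset[0,1)$ the bound is trivial: the curve $\nu^A=c\,f\circ\nu_0\circ u(\cdot)$ is smooth on $[0,1)$ (as $\nu_0$, $f$, and the reparametrization $u$ by half-plane capacity are all smooth), hence $\l^A$ is smooth on $[0,1)$ and $|(\l^A)'|$ is uniformly bounded on $[0,1-\eta]$. The substantive case is therefore $T$ close to $1$. Writing $\Phi_T(s)=[\l(T+s(1-T))-\l(T)]/\sqrt{1-T}+4$ for the shifted renormalization (this is the function $\phi_T$ from the proof of Proposition \ref{p:similar2}), differentiation at $s=0$ gives $\Phi_T'(0)=\sqrt{1-T}\,(\l^A)'(T)$, so it suffices to show that $|\Phi_T'(0)|$ stays bounded as $T\to 1$. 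Theorem \ref{t:spiral driving} yields $\Phi_T\to\l^4$ in the Loewner topology, and $(\l^4)'(0)=-2$; the remaining step is to upgrade this $L^\infty$-convergence to $C^1$-convergence at $s=0$.

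This upgrade comes from the geometric picture of Section \ref{s:spiralexamples}. Lemma \ref{l:caratheodory} provides Caratheodory convergence of the rescaled domains to $D_0$; Lemma \ref{l:close to identity} refines this to uniform $L^\infty$-closeness of the analytic maps $\psi_T$ to the identity on fixed compact subsets of $D_0$; and Cauchy's integral formula then upgrades $L^\infty$-closeness of analytic functions to $C^k$-closeness on slightly smaller compacta. Since both $\nu_T$ and $\gamma^4$ are smooth quasislits meeting $\R$ perpendicularly at their initial points (inherited from the explicit tangential map $k$ of Proposition \ref{p:tang}), the formula expressing $\Phi_T'(0)$ through the second-order coefficient of the square-root expansion of the inverse conformal map at the tip is continuous under $C^2$-convergence, so $\Phi_T'(0)\to -2$, yielding the uniform boundedness we need. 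The main obstacle is precisely this last transition: extracting a bound on the driving term's derivative at $s=0$, where the relevant conformal maps themselves have a square-root branch singularity at the tip. Making this rigorous requires combining the geometric convergence of Lemmas \ref{l:caratheodory}--\ref{l:close to identity} with the Koebe-type distortion estimates underlying Section \ref{s:nearby}, together with the explicit form of $k$ from Section \ref{s:tangential}.
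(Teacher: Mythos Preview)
Your reduction to a pointwise bound $\sqrt{1-T}\,|(\l^A)'(T)|\leq M$ is correct and the integration step is clean; this would indeed yield arbitrarily small local Lip $1/2$ norm. But the proof of the bound near $T=1$ has a real gap, which you yourself flag. You need $\Phi_T'(0)$ to stay bounded, and you propose to upgrade the convergence $\Phi_T\to\l^4$ from Theorem \ref{t:spiral driving} to $C^1$-convergence at $s=0$. The Cauchy-estimate argument you invoke, however, upgrades $L^\infty$-closeness of \emph{analytic} functions on \emph{open} sets; the driving terms are real functions on an interval, and the conformal maps you would actually apply Cauchy to have a square-root branch precisely at the tip. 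Reading off $\Phi_T'(0)$ from the geometry means controlling the second-order term in the square-root expansion of $\nu_T$ at its base, uniformly in $T$; neither Lemma \ref{l:caratheodory} (Carath\'eodory convergence of domains) nor Lemma \ref{l:close to identity} ($L^\infty$-closeness of $\psi_T$ to the identity on $\widetilde\nu\cup B$) supplies this directly. One could try to push it through by Schwarz-reflecting $\psi_T$ across the analytic arc $\phi_T(\nu)$ near $\pi i$ and then applying Cauchy, but establishing the required uniformity in $T$ is a nontrivial step you have not carried out.

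The paper's argument is different and avoids derivatives altogether. It observes that after rescaling, the local Lip $1/2$ condition on $[t,t+\delta(1-t)]$ becomes a \emph{global} Lip $1/2$ bound on the driving term of $\nu_t|_{[0,\delta]}$. Then, using analyticity of the basic spiral $\nu_0$ together with Koebe distortion, it shows that $\nu_t[0,\delta]$ is a $K(\delta)$-quasislit half-plane with $K(\delta)\to 1$ as $\delta\to 0$, uniformly in $t$. Finally it invokes Theorem 2 of \cite{MR2}, which converts small quasislit constant directly into small Lip $1/2$ norm of the driving term. This geometric-to-analytic black box makes no reference to $\l'$ and does not even require $\l$ to be differentiable; it is both shorter and more robust than the derivative route.
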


\begin{proof} We need to show that the driving term of $g_t(\nu[t,t+\delta(1-t)))$  has small 
Lip 1/2 norm. Since scaling does not change the Lip 1/2 norm, this is equivalent to saying that
the renormalizations $\nu_t$, restricted to the interval $[0,\delta]\subset[0,1]$, have small
Lip 1/2 norm if $\delta$ is small. Using the analyticity of the basic spiral $\nu_0$ 
together with Koebe distortion, it is not hard to see that
$\nu_t[0,\delta]$ is a $K(\delta)$-quasislit half-plane with $K(\delta)\to1$ as $\delta\to0.$ Now the proposition 
follows from Theorem 2 in \cite{MR2}. 
\end{proof}

We end this section by noticing that the proofs of this section can be modified to
show the following: 

\begin{thm} If a sufficiently smooth (for instance asymptotically conformal)
Loewner trace $\g[0,1]$ has a self-intersection of angle $\pi(1-\theta)$ (see Figure \ref{gt})
with $\theta\in[0,1)$, then 
$$ \lim_{t \rightarrow 1}
\frac{\lambda(t)-\lambda(1)}{\sqrt{1-t}}=\kappa,$$
where
\be
\kappa=2\sqrt{1-\theta}+\frac{2}{\sqrt{1-\theta}}>4.
\ee
Similarly if $\g$ is asymptotically similar to the logarithmic spiral
$S_\theta$
(\ref{logspiral})
of Section \ref{s:spirals} then
$$ \lim_{t \rightarrow 1}
\frac{\lambda(t)-\lambda(1)}{\sqrt{1-t}}=\kappa,$$
where $\kappa=-4\sin \theta$.

\end{thm}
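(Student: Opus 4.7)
The plan is to follow the three-step strategy of the proof of Theorem \ref{t:spiral driving} with the model $D_0$ (slit half-plane) replaced by the appropriate model: the slanted slit half-plane $D_\theta$ of Section \ref{ss:collisions} in the self-intersection case, and the logarithmic spiral complement $\C\setminus S^1$ of Section \ref{s:spirals} in the spiral case.

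\emph{Step 1.} Invoke Proposition \ref{p:similar2}: it suffices to show that the renormalized curves $\g_T$, translated so as to start at $\kappa$, converge to $\g^\kappa$ in the Loewner topology. Here $\g^\kappa$ is the self-similar trace of $\kappa\sqrt{1-t}$ built in Proposition \ref{p:slit} (when $\kappa>4$) or Proposition \ref{p:spiral} (when $0<|\kappa|<4$). As in Section \ref{s:self-similar}, $\g^\kappa$ equals $k^{-1}$ applied to the canonical ray/spiral in the model domain $D_\theta$ (or $\C\setminus S^1$).

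\emph{Step 2.} Set up the analog of Lemma \ref{l:caratheodory}: for each $t<1$, choose a linear map $\phi_t(z)=a_t z+b_t$ sending $\g(t)$ to the tip $e^{i\pi\theta}$ (resp.\ $1$, in the spiral case) and the accumulation point $\g(1)$ to $0$. The asymptotic conformality of $\g$ at its self-intersection, together with the fact that the two branches meet at angle $\pi(1-\theta)$, ensures that the rescaled domains $\phi_t(\H\setminus\g[0,t])$ converge in the Caratheodory topology (with respect to a fixed interior reference point) to $D_\theta$. In the spiral case, the hypothesis of asymptotic similarity to $S_\theta$ directly yields Caratheodory convergence of $\phi_t(\H\setminus\g[0,t])$ to $\C\setminus S^1$, and one verifies that the image $\phi_t(\hat\g(t))$ of the preceding turn converges to the appropriate preimage point, playing the role of $O_t\to 0$ in Lemma \ref{l:caratheodory}.

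\emph{Step 3.} Following Lemma \ref{l:close to identity}, let $\psi_t:\phi_t(\H\setminus\g[0,t])\to \mathrm{(model)}$ be the conformal map normalized at three boundary points (the tip, the image of $\g(1)$, and $\infty$ mapping to the appropriate prime end of the model). Caratheodory convergence together with convergence of these three boundary preimages (proved by extremal length arguments exactly as in Lemma \ref{l:close to identity}) gives $\psi_t\to\mathrm{id}$ uniformly on compact subsets of the future trace $\phi_t(\g[t,t+\tau(1-t)])$ for each fixed $\tau<1$. Writing
\[
\frac{g_t}{\sqrt{1-t}}\;=\;L_t\circ k^{-1}\circ \psi_t\circ \phi_t,
\]
with $L_t$ a linear self-map of $\H$, the half-plane capacity computation in the proof of Theorem \ref{t:spiral driving} (monotonicity and subadditivity of $\hcap$, together with Lemma \ref{timesclose}) forces $L_t\to \mathrm{id}$. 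Thus $\g_T(\tau)-\g_T(0)+\kappa$ is uniformly close to $\g^\kappa(\tau)$ on $[0,\tau_0]$ for each $\tau_0<1$, and Theorem \ref{nearby} then yields uniform convergence of the corresponding driving terms, completing the verification of the hypothesis of Proposition \ref{p:similar2}.

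The main obstacle is Step 2, the Caratheodory convergence of rescaled domains. In the self-intersection case one needs to control \emph{both} branches of $\g$ simultaneously near the collision point, so that the full slit $S_\theta$ (and not just one side of it) appears in the limit; this is where asymptotic conformality (or some equivalent one-sided tangent condition on both branches meeting at angle $\pi(1-\theta)$) is crucial. In the spiral case the delicate point is that the Caratheodory convergence must survive after zooming in through infinitely many turns; precisely this is what ``asymptotically similar to $S_\theta$'' must be taken to mean, and one needs the rate of the asymptotic similarity at the accumulation point to dominate the inter-turn distances, exactly as in Remark \ref{r1}. Once Caratheodory convergence is in hand, Steps 1 and 3 are essentially identical to the $\kappa=4$ argument.
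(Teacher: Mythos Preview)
Your proposal is correct and follows exactly the route the paper indicates: the paper does not give a separate proof but simply remarks that ``the proofs of this section can be modified'' to yield the theorem, and you have carried out precisely those modifications---replacing the tangential model $D_0$ by $D_\theta$ (resp.\ $\C\setminus S^1$), rerunning Lemmas \ref{l:caratheodory} and \ref{l:close to identity}, and then feeding the conclusion into Proposition \ref{p:similar2} via Theorem \ref{nearby} as in the proof of Theorem \ref{t:spiral driving}. One small imprecision: the capacity argument only gives $\alpha_T\to 1$ for the dilation part of $L_T$, not $L_T\to\mathrm{id}$; the translation $\beta_T$ is absorbed into the shift $\g_T-\g_T(0)+\kappa$, exactly as in the paper.
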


\section{Collisions}\label{s:collisions}

In this section we give sufficient conditions for the trace to
intersect itself in finite time.

\begin{thm}\label{t:collide} Suppose $\l(t)$ is continuous on $[0,1]$,
satisfies
\be\label{glim}
\lim_{t\to 1} \frac{\l(t)}{\sqrt{1-t}}=\kappa > 4,
\ee
and assume there is $C<4$ so that $\l$ has local Lip 1/2 norm less than $C$
(Definition \ref{d:smallnorm}).
Then the trace $\g[0,1]$ driven by $\t$ is a Jordan arc. 
Moreover, $\gamma_T(1)\in\R$ and
\be\label{colang}
\lim_{t\to 1}
\arg(\gamma_T(t)-\gamma_T(1))
=\pi\frac{1-\sqrt{1-16/\kappa^2}}{1+\sqrt{1-16/\kappa^2}},
\ee
provided $1-T$ is sufficiently small.
\end{thm}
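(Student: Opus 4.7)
The strategy is to compare $\gamma$ to the self-similar trace $\gamma^\kappa$ of $\lambda^\kappa(t)=\kappa\sqrt{1-t}$, which by Proposition \ref{p:slit} is a Jordan arc ending at $2\sqrt{1-\theta}\in\R$ and meeting $\R$ at angle $\pi(1-\theta)$, where $\theta=2(1+\kappa/\sqrt{\kappa^2-16})^{-1}$; a short computation ($\theta=2r/(1+r)$ with $r=\sqrt{1-16/\kappa^2}$) gives $\pi(1-\theta)=\pi(1-\sqrt{1-16/\kappa^2})/(1+\sqrt{1-16/\kappa^2})$, matching (\ref{colang}). The plan is: first, to show that the renormalized driving terms $\lambda_T$ converge uniformly to $\lambda^\kappa$ on every $[0,1-\eta]$; second, to apply Theorem \ref{t:uniform} to deduce $\gamma_T\to\gamma^\kappa$ uniformly on $[0,1-\eta]$; and third, to iterate this on geometric scales $1-\tau_n=\eta^n(1-T)$ to push the comparison all the way to $t=1$.

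For the first step, writing $s=T+t(1-T)$ so that $1-s=(1-t)(1-T)$ gives
\bes
\lambda_T(t)=\frac{\lambda(s)}{\sqrt{1-T}}=\sqrt{1-t}\,\frac{\lambda(s)}{\sqrt{1-s}}.
\ees
For $t\in[0,1-\eta]$, $s\to 1$ uniformly as $T\to 1$, so by (\ref{glim}) one obtains $\lambda_T\to\lambda^\kappa$ uniformly on $[0,1-\eta]$. A change of variables shows $\lambda_T$ inherits the local Lip 1/2 bound of $\lambda$ with the same constant $C<4$, and $\lambda^\kappa$ restricted to $[0,1-\eta]$ has Lip 1/2 norm on scale $D$ bounded by $\kappa\sqrt{D}/(2\sqrt{\eta})$, which is $<4$ once $D<64\eta/\kappa^2$. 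Thus Theorem \ref{t:uniform} (applied on each $[0,1-\eta]$, which requires only trivial modifications of the stated theorem) yields $\gamma_T\to\gamma^\kappa$ uniformly on $[0,1-\eta]$; the same bound also gives that $\gamma[0,1)$ is itself a Jordan arc, as a concatenation of quasislits with uniform quasiconformal parameter.

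For the iteration, by Lemma \ref{l:semigroup} the renormalization of $\gamma_T$ at time $\tau$ equals (up to translation) $\gamma_{T'}$ with $1-T'=(1-\tau)(1-T)$. Taking $\tau_n$ defined by $1-\tau_n=\eta^n(1-T)$ and applying the previous step to each $\tau_n$, the piece $\gamma[\tau_n,\tau_{n+1}]$, pulled down by $g_{\tau_n}$ and rescaled by $1/\sqrt{1-\tau_n}$, is within an error $\epsilon_n\to 0$ of $\gamma^\kappa[0,1-\eta]$. The uniform quasislit property then gives
\bes
\diam\gamma[\tau_n,\tau_{n+1}]\le C'\sqrt{\tau_{n+1}-\tau_n}\le C''\eta^{n/2}\sqrt{1-T},
\ees
so summing shows $\gamma(t)$ is Cauchy as $t\to 1$ and the limit $\gamma(1)$ exists. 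Since $\Im\gamma$ on these pieces decays like $\eta^{n/2}$, $\gamma(1)\in\R$, and $\gamma[0,1]$ is Jordan because $\gamma[0,1)\subset\H$ while $\gamma(1)\in\R$. For the angle, on each scale $n$ the comparison $\gamma_{\tau_n}\approx\gamma^\kappa$ on $[0,1-\eta]$ (made quantitative via Theorem \ref{nearby}) forces $\arg(\gamma_T(t)-\gamma_T(1))$ to match the corresponding direction of $\gamma^\kappa$ up to errors vanishing as $n\to\infty$ and $1-T\to 0$; the limit as $t\to 1$ is therefore $\pi(1-\theta)$.

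The main obstacle is precisely the passage from compact convergence on $[0,1)$ to behaviour at $t=1$: Theorem \ref{t:uniform} cannot be applied directly to $\lambda^\kappa$ on all of $[0,1]$ because its global Lip 1/2 norm equals $\kappa>4$, so the straightforward comparison degenerates at the endpoint. The self-similarity of $\lambda^\kappa$ (Proposition \ref{p:similar}) is what rescues the argument — each rescaled copy of $\lambda$ looks asymptotically like $\lambda^\kappa$ on the same interval $[0,1-\eta]$, the Lip 1/2 bound $<4$ persists at every scale, and the geometric contraction of scales by factor $\sqrt\eta$ packages the infinitely many rescalings into a single summable quantitative bound.
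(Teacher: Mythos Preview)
Your overall strategy—renormalize, compare to the exact solution $\gamma^\kappa$ on $[0,1-\eta]$ via Theorem \ref{t:uniform}, and iterate on geometric scales—is the same as the paper's, and your existence argument for $\gamma_T(1)$ via summable diameter decay is essentially sound (the H\"older exponent coming from the quasislit property is some $\alpha(K)>0$ rather than $1/2$, but the series still sums). The genuine gap is in the angle conclusion \eqref{colang}.

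Your comparison $\gamma_{\tau_n}\approx\gamma^\kappa$ lives in the \emph{renormalized} coordinates. To return to the $\gamma_T$-plane you must apply $F_{\tau_n}=(g^{\lambda_T}_{\tau_n})^{-1}$ (composed with a dilation). These maps are conformal on $\H$, but the relevant limit point lies on $\R$, and conformal self-maps of $\H$ carry no automatic control on angular distortion near the boundary. Saying the direction ``is forced to match'' (and invoking Theorem \ref{nearby}, which concerns closeness of driving terms, not angles) is exactly where an idea is missing. The paper fills this gap with Lemma \ref{interval}: if $\|\sigma-\kappa\|_\infty$ is small then the interval $I_s=G_s(\Gamma[0,s])$ stays inside $(A-\delta,\infty)$, so by Schwarz reflection every $F_s$ extends univalently to the \emph{fixed} disc $\{|z-B|<(A-B)/2\}$. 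For $u$ large the pulled-down pieces $G_{(n-1)u}(\Gamma[n u,(n+1)u])$ lie in an arbitrarily small neighborhood of $B$ (because $\Gamma^\kappa(s)\to B$), and then the classical argument-distortion theorem for normalized univalent functions (\cite[Theorem 3.5]{Du}) transfers the direction $\pi(1-\theta)$ from the model to $\arg\bigl(\Gamma(s)-F_{(n-1)u}(B)\bigr)$. A nested-cone argument on $\R$ (Lemma \ref{l:cones} and the paragraph following it) then pins down $\gamma_T(1)$ and the angle simultaneously. Without the analytic extension across $B$ you cannot carry the angle back from the renormalized picture to the original one.
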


Condition \ref{e:smallnorm} with $C<4$ is the smoothness condition referred to in
Theorem \ref{t:collision}. It will be used to prove that the trace
$\gamma_T$, for $T$ near 1, is
a curve which is close to the self-similar curve given in Proposition
\ref{p:slit}.
Recall that $\g_T=g_T(\g(T+t))/\sqrt{1-T}$, $t\in [0,1-T]$. 
The reason that $\g_T$
appears in the conclusion of Theorem
\ref{t:collide} instead of $\g$ is that the trace $\g$ might intersect
itself in $\H$ rather than in $\R$. Alternatively, we could have added the requirement
that $\Vert
\l(t)-\kappa\sqrt{1-t}\Vert_{\infty}$  be sufficiently small 
and then the conclusion holds with $\g_T$ replaced by $\g$,
as in the statement of Theorem \ref{t:collision}.

The method of proof also applies to the case $|\kappa|<4$ and yields the following
result:

\begin{thm}\label{t:sp} Suppose $\l(t)$ is continuous on $[0,1]$,
satisfies
\be
| \lim_{t\to 1} \frac{\l(t)}{\sqrt{1-t}}| < 4,
\ee
and assume there is $C<4$ so that $\l$ has local Lip 1/2 norm less than $C$.
Then the trace $\g$ driven by $\l$  is a Jordan arc.
Moreover, $\gamma$ is asymptotically similar to the logarithmic spiral at 
$\gamma(1)\in \H$.
\end{thm}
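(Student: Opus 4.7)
The plan is to parallel the proof of Theorem \ref{t:collide}: show that the renormalized driving terms $\l_T$ converge uniformly on $[0,1]$ to $\kappa\sqrt{1-t}$, deduce via Theorem \ref{t:uniform} that the renormalized traces $\g_T$ converge uniformly to the logarithmic spiral $\g^\kappa$ of Proposition \ref{p:spiral}, and then, since $|\kappa|<4$ places the limit point $\beta=\lim_{t\to 1}\g^\kappa(t)$ in the interior of $\H$, transfer these facts back to obtain $\g(1)\in\H$ and the asymptotic similarity. The Jordan arc conclusion for $\g[0,T]$, $T<1$, follows as in the proof of Theorem \ref{t:uniform}: the local Lip $1/2$ hypothesis with constant $C<4$ means $\g[0,T]$ is a concatenation of $K(C)$-quasislit pieces, hence a Jordan arc.

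For the convergence step, use (\ref{renorml}) to rewrite
\[
\l_T(t)=\frac{\l\bigl(T+(1-T)t\bigr)}{\sqrt{1-T}}=\sqrt{1-t}\cdot\frac{\l\bigl(T+(1-T)t\bigr)}{\sqrt{1-(T+(1-T)t)}}.
\]
Since $T+(1-T)t\to 1$ uniformly in $t\in[0,1]$ as $T\to 1$, the hypothesis $\l(s)/\sqrt{1-s}\to\kappa$ gives $\|\l_T-\kappa\sqrt{1-\cdot}\|_\infty\to 0$. The change of variable $t\mapsto T+(1-T)t$ transfers the local Lip $1/2$ condition on $\l$ to $\l_T$ with the same constant $C$ on subintervals of a fixed length $\delta_0>0$ independent of $T$, while $\kappa\sqrt{1-t}$ is itself Lip $1/2$ with norm $|\kappa|<4$. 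Theorem \ref{t:uniform} then yields $\g_T\to\g^\kappa$ uniformly on $[0,1]$ as $T\to 1$.

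In particular the endpoint of $\g_T$ converges to $\beta\in\H$. Because $\beta$ lies in the interior of $\H\setminus\g^\kappa[0,t]$ for each $t<1$, the maps $g_T^{-1}$ are uniformly conformal, with controlled distortion, on a ball around $\sqrt{1-T}\,\beta$ of radius comparable to $\sqrt{1-T}$. Writing $\g[T,1)=g_T^{-1}\bigl(\sqrt{1-T}\,\g_T\bigr)$ and applying Theorem \ref{nearby} (with Lemma \ref{timesclose} for capacity control) to the pair of hulls $g_T^{-1}(\sqrt{1-T}\,\g_T)$ and $g_T^{-1}(\sqrt{1-T}\,\g^\kappa)$, one obtains that $\g(t)$ is Cauchy as $t\to 1$; denote the limit $\g(1)\in\H$. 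For the asymptotic similarity, pass to $s$-coordinates: by Lemma \ref{l:semigroup} the shifted curves $\Gamma_u=G_u(\Gamma[u,\infty))$ are driven by $\sigma_u(s)=\sigma(u+s)\to\kappa$, and the argument above gives $\Gamma_u\to\Gamma^\kappa$. By Proposition \ref{p:spiral}, $\Gamma^\kappa$ is an exact logarithmic spiral around $\beta$. Transferring via the uniformly conformal inverse $G_u^{-1}$ near $\beta$ shows that after the unique affine normalization of $\H$ sending $\g(1)$ to $\beta$ and $\g(t(u))$ to $\sigma(u)\to\kappa$, the trace $\g[t(u),1)$ converges to $\Gamma^\kappa$ as $u\to\infty$, which is the desired asymptotic similarity.

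The main obstacle is the last step: passing from Carathéodory-type convergence $\Gamma_u\to\Gamma^\kappa$ at each scale to genuine asymptotic similarity of $\g$ at $\g(1)$, a uniform statement across all scales. Concretely, the affine normalizations identifying $\g[t(u),1)$ with a copy of $\Gamma^\kappa$ depend on $u$, and one must show these normalizations themselves converge as $u\to\infty$. This requires quantitative control on how the error $\sigma_u-\kappa$ propagates through $G_u^{-1}$ in a fixed neighborhood of $\beta$; Theorem \ref{nearby}, the exact self-similarity of $\Gamma^\kappa$ from Proposition \ref{p:similar}, and the conformality of $G_u^{-1}$ at the interior point $\beta$ should combine to close this gap.
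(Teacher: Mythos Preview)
Your outline has a real gap at the step you treat as routine, not at the step you flag as the main obstacle. You assert that Theorem \ref{t:uniform} yields $\g_T\to\g^\kappa$ uniformly on all of $[0,1]$. But the local Lip $1/2$ hypothesis on $\l$ transfers to $\l_T$ as $|\l_T(\tau)-\l_T(\tau')|\le C|\tau-\tau'|^{1/2}$ only for $|\tau-\tau'|<\delta(1-\tau)$, and this window shrinks to zero as $\tau\to1$. So there is no fixed $D>0$ for which condition (\ref{unif2}) holds on $[0,1]$, and Theorem \ref{t:uniform} only delivers uniform convergence on each $[0,t_0]$ with $t_0<1$ (this is exactly what the paper obtains in Lemma \ref{l:hyperbolic}). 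Consequently you do not know that $\g_T(1)$ converges, or even that $\g_T$ is equicontinuous near $1$, and your Cauchy argument for $\g(t)$ as $t\to1$ collapses: the step ``applying Theorem \ref{nearby} \ldots\ one obtains that $\g(t)$ is Cauchy'' needs closeness of $\g_T$ to $\g^\kappa$ on the full interval, which you do not have. (Also, Theorem \ref{nearby} compares driving terms of nearby hulls; it is not the tool for proving a Cauchy condition on trace endpoints.)

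The missing idea, which the paper supplies, is a contraction at the interior fixpoint $\beta\in\H$. In the $s$-parametrization one computes from (\ref{gdef2}) that $|{F^\kappa_{u_0}}'(\beta)|<1$, so $\beta$ is an attracting fixed point of the time-$u_0$ map $F^\kappa_{u_0}$. Writing $F_{(n-2)u_0}=f_1\circ\cdots\circ f_{n-2}$ as a composition of time-$u_0$ increments, each $f_j$ with $j\ge2$ is (for $u_0$ large) uniformly close to $F^\kappa_{u_0}$; Hurwitz and Koebe give a fixed disc $D$ around $\beta$ with $f_j(D)\subset D$ and $|f_j'|\le c<1$ on $D$. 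Since $G_{(n-2)u_0}(\G_n)=\Gamma_2(\s_{(n-2)u_0})\subset D$ by the uniform convergence one \emph{does} have (on $[0,t_0]$, which translates to bounded $s$-intervals), one obtains $\diam \G_n\le c^{n-2}$, hence $\g(1)$ exists. The asymptotic similarity then follows by the same distortion control on $F_{(n-2)u_0}$ applied to $\Gamma_2(\s_{(n-2)u_0})\to\Gamma_2(\s^\kappa)$. The dynamical contraction is precisely what converts ``convergence on each compact $s$-interval'' into the uniform statement at $t=1$ that your argument presupposes.
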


We first outline the idea underlying the proofs of 
Theorems  \ref{t:collide} and \ref{t:sp}, then give the 
details of the proof of Theorem \ref{t:collide}, and finally describe the adjustments
neccessary for the  proof of Theorem \ref{t:sp}.

\noindent{\it Outline} of the Proof of Theorems \ref{t:collide} and \ref{t:sp}. 
Since $\l$ has local Lip 1/2 norm less than 4, the trace $\g[0,t]$ is
a Jordan arc for each $t < 1$ by Theorem \ref{t:uniform}.  Let
$\Gamma(s)=\gamma(t(s))$ be the reparametrization of $\gamma$
described in Section \ref{s:scaling}.
Let $\Gamma^\kappa$ denote the self-similar curve driven by
$\s^\kappa(s)\equiv \kappa$ as in
Proposition \ref{p:slit}, and let $F^\kappa$ be the solution to
(\ref{LDE-F}) driven by $\s^\kappa$.
Fix $u_0$ large and
decompose $\Gamma$ as
$$\Gamma = \bigcup_{n=1}^{\infty} \Gamma_n$$
where $\Gamma_n= \Gamma_n(\s) = \Gamma[(n-1) u_0, n u_0].$
Then 
$$G_{(n-2) u_0}(\Gamma_n (\s)) = \Gamma_2(\s_{(n-2)u_0})$$
($\s_{(n-2)u_0}$ is $\s$ shifted by $(n-2)u_0$). By assumption, $\s_{(n-2)u_0}$ is close
to $\kappa$ if $n$ is large, hence $G_{(n-2) u_0}(\Gamma_n (\s))$ is close
to $\Gamma_2(\s^\kappa).$
Notice that $\Gamma_2(\s^\kappa)$ is close to a line segment if $u_0$ is large.
Now 
$$\Gamma_n = F_{(n-2)u_0} (G_{(n-2) u_0}(\Gamma_n (\s)))$$ so the Theorems follow from the fact that 
the map $F_{(n-2)u_0}$ is conformal (and contracting) in a
neighborhood of the fixpoint $B$ resp. $\beta$ of $F^\kappa$,
where the neighborhood does not depend on $n.$
In the case $\kappa>4,$ this is proved in Lemma \ref{interval}. If
$\kappa<4$, this is follows because  
$\beta\in\H$ and all $F_s$ are univalent in $\H.$

Now for the details.

\begin{proof}[Proof of Theorem \ref{t:collide}]
As before,
let $\Gamma^\kappa$ denote the self-similar curve driven by
$\s^\kappa(s)\equiv \kappa$ as in
Proposition \ref{p:slit}, and let $F^\kappa$ be the solution to
(\ref{LDE-F}) driven by $\s^\kappa$.
Set
$\theta=2(1+\kappa/\sqrt{\kappa^2-16})^{-1},$
\be\label{ABdef2}
A=\frac{2}{\sqrt{1-\theta}}\quad\text{ and }\quad B=2\sqrt{1-\theta},
\ee
so that $B < A < \kappa=A+B.$
Then by Proposition \ref{p:slit}, $\G^\kappa$ is a curve in $\H$ from
$\kappa$
to $B$, which meets $\R$ at angle
$\pi(1-\theta)=\pi\frac{1-\sqrt{1-16/\kappa^2}}{1+\sqrt{1-16/\kappa^2}}$.

Let
$\sigma(s)=e^{s/2}\lambda(1-e^{-s})$ be the (time changed) driving term
associated with $\lambda$ and let $\Gamma$ be the trace driven by $\s$.
Our first task is to prove that the solutions $F_s$ to the (time
changed) 
Loewner equation (\ref{LDE-F}) extend to be analytic in a fixed neighborhood of
$B$.

Define the interval
$I_s^\kappa=[x^\kappa_1(s),x^\kappa_2(s)]=G^\kappa_s(\G[0,s])$ as the
preimage of $\Gamma^\kappa[0,s]$,  by the map $F^\kappa_s$
so that $F^\kappa_s(x^\kappa_1(s))=F^\kappa_s(x^\kappa_2(s))=\kappa$
and $F^\kappa_s(\kappa)=\G(s)$.
By the Schwarz Reflection Principle,   $F^\kappa_s$ extends to be a
conformal map of $\mathbb{C}\setminus I_s^\kappa$ onto
$\mathbb{C}\setminus (\Gamma^\kappa[0,s]\cup \G^\kappa[0,s]^\R)$ where
$\Gamma^\kappa[0,s]^\R$ is the
reflection of $\Gamma^\kappa[0,s]$ about $\R$.  Note that by
(\ref{kdef}) and (\ref{gdef1}) we have that $F^\kappa_s(A)=A$. Since
$F^\kappa(x_j)=\kappa$ and $F^\kappa_s(\kappa)=\Gamma^\kappa(s)$ is
the tip of the slit $\Gamma^\kappa[0,s]$
we conclude
\begin{equation*}
0 < B < A < x^\kappa_1 < \kappa < x^\kappa_2 < \infty.
\end{equation*}
See Figure \ref{reflect}.
\begin{figure}[h]
\vskip 0.3truein
\centering
\psfrag{al}{$A$}
\psfrag{b}{$B$}
\psfrag{K}{$\kappa$}
\psfrag{FK}{$F^\kappa_s$}
\psfrag{x1}{$x^\kappa_1$}
\psfrag{x2}{$x^\kappa_2$}
\psfrag{Rgs}{$\Gamma^\kappa[0,s]^\R$}
\psfrag{gs}{$\G^\kappa[0,s]$}
\psfrag{Is}{$I^\kappa_s$}
\centerline{\includegraphics[width=4.25in]{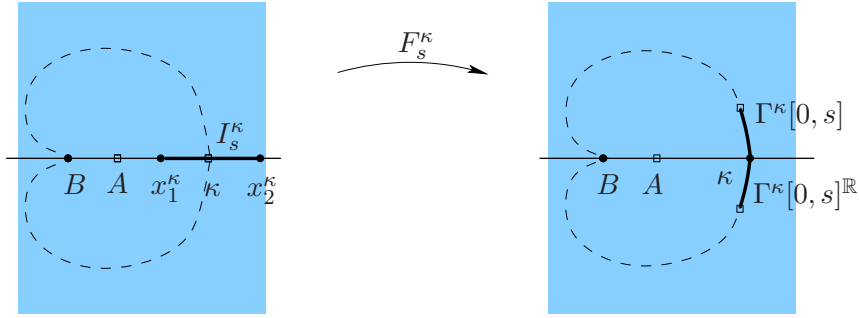}}
\caption{Extending the map $F^\kappa_s$ by reflection.
}\label{reflect}
\vskip 0.3truein
\end{figure}

Let $G_s=G_s^\s$ be the solution to (time changed) Loewner's differential equation 
(\ref{LDE-G}) 
driven by $\s$ and let $I_s=I_s^\s=[x_1(s),x_2(s)]=G_s([\Gamma[0,s])$
\bigskip

\begin{lemma}\label{interval} Suppose $\kappa > 4$.
Given $\delta >0$ there exists $\epsilon_1 > 0$ so that if
$||\sigma - \kappa ||_\infty < \epsilon_1 $ then
\begin{equation}
I_s \subset (A-\delta, \infty)
\end{equation}
where $A$ is defined  by (\ref{ABdef2}).
\end{lemma}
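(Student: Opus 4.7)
The plan is to reduce the claim to a lower bound on $x_1(s) = \min I_s$ and then to run a barrier argument using the fact that $A$ is a stable fixed point of the unperturbed ODE. Recall that both endpoints $x_1(s) < x_2(s)$ of $I_s$ are driven by the boundary ODE obtained from (\ref{LDE-G}),
$$\dot x_j(s) = P(x_j(s),\sigma(s)) := \frac{2}{x_j(s)-\sigma(s)} + \frac{x_j(s)}{2},$$
with $x_j(0)=\sigma(0)$, $x_1(s)<\sigma(s)<x_2(s)$ for $s>0$. Since $x_2 > x_1$, showing $I_s\subset (A-\delta,\infty)$ is the same as showing $x_1(s)>A-\delta$ for all $s$.

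For the unperturbed case $\sigma\equiv\kappa$, the relations $A+B=\kappa$ and $AB=4$ of Section \ref{ss:collisions} give the factorization
$$P(x,\kappa) = \frac{(x-A)(x-B)}{2(x-\kappa)},$$
so $A$ and $B$ are precisely the real fixed points. The sign analysis on the interval $(B,\kappa)$ shows that $P(\cdot,\kappa)>0$ on $(B,A)$ and $P(\cdot,\kappa)<0$ on $(A,\kappa)$, so $A$ is attracting from both sides within this basin. Concretely, for $0<\delta<A-B$ one computes
$$P(A-\delta,\kappa) = \frac{\delta(A-B-\delta)}{2(B+\delta)} =: 2c_0(\delta) > 0,$$
using $AB=4$ to collapse $-2/B+A/2=0$. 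Without loss of generality I may assume $\delta<A-B$, since making $\delta$ smaller strengthens the conclusion. By continuity of $P(A-\delta,\cdot)$ near $\sigma=\kappa$, choose $\epsilon_1>0$ small enough that $|\sigma-\kappa|<\epsilon_1$ implies $P(A-\delta,\sigma)>c_0$ and also $\sigma(0)>A$; then the initial value satisfies $x_1(0)=\sigma(0)>A>A-\delta$.

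It remains to rule out a first-exit time. Suppose toward contradiction that $s_0 := \inf\{s>0:x_1(s)\le A-\delta\}$ is finite. By continuity of $x_1$, one has $x_1(s_0)=A-\delta$ and $x_1(s)>A-\delta$ for all $s<s_0$. But then, by the barrier estimate of the previous paragraph,
$$\dot x_1(s_0) = P(A-\delta,\sigma(s_0)) > c_0 > 0,$$
so $x_1$ is strictly increasing at $s_0$; in particular $x_1(s)<x_1(s_0)=A-\delta$ for $s$ slightly less than $s_0$, contradicting the definition of $s_0$. Hence no such $s_0$ exists and $x_1(s)>A-\delta$ for every $s$, completing the proof.

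The main obstacle is essentially a bookkeeping one: verifying the signs and the factorization at $x=A-\delta$, and checking that the ODE for $x_1(s)$ is well-defined and smooth away from the singular locus $\{x_1=\sigma\}$ (which is not an issue here because we keep $x_1$ bounded away from $\sigma\approx\kappa>A>A-\delta$). Note that no control on $x_2$ is needed, since the lemma is a one-sided statement.
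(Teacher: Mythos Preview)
Your proof is correct and follows essentially the same barrier argument as the paper: both use the factorization $P(x,\kappa)=\frac{(x-A)(x-B)}{2(x-\kappa)}$ to see that the flow is pushed to the right at the level $A-\delta$, and then argue by contradiction at a first exit time. The only technical difference is that the paper tracks the auxiliary real point $G_s(\kappa-\epsilon_1)$, which lies strictly below $x_1(s)$ and manifestly satisfies (\ref{LDE-G}), whereas you work with the interval endpoint $x_1(s)$ directly; this sidesteps the (standard but not entirely trivial) verification that $x_1(s)$ itself obeys the ODE for $s>0$, which you acknowledge but do not carry out.
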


\begin{proof} Write $I_s=[x_1(s),x_2(s)]$. 
By (\ref{LDE-G})
\begin{equation*}
\dot{G}=\frac{2}{G-\sigma} +\frac{G}{2}=\frac{G^2-\sigma G
+4}{2(G-\sigma)}.
\end{equation*}
Thus $\dot{G} > 0$ whenever
\begin{equation}\label{Gint}
\frac{\sigma-\sqrt{\sigma^2-16}}{2} < G <
\frac{\sigma + \sqrt{\sigma^2-16}}{2} < \sigma.
\end{equation}
Recall from (\ref{ABdef}) and (\ref{Ktheta}) that $A+B=\kappa$ and
$AB=4$, so that $A$ and $B$ are roots of the equation
$\zeta^2-\kappa\zeta +4=0$. Since $B< A < \kappa$, 
we may suppose that $\delta$ is so small that $B+\delta<
A-\delta$. Then for $\epsilon_1$ sufficiently small and
$||\sigma-\kappa||_\infty<\epsilon_1$, we have that
\begin{equation}\label{intpos}
\frac{\sigma - \sqrt{\sigma^2-16}}{2} < B+ \delta < A -
\delta < \frac{\sigma+\sqrt{\sigma^2-16}}{2}.
\end{equation}
Thus $G_s(\kappa-\epsilon_1)$ is a continuous function of $s$
with $G_s(\kappa-\epsilon_1)< x_1(s)$ and
$G_0(\kappa-\epsilon_1)=\kappa-\epsilon_1 >
A-\delta$. Suppose there is an $s >0$ so that
\begin{equation*}
G_s(\kappa-\epsilon_1) < A-\delta.
\end{equation*}
Then we can find an $s_1 >0$ and $s_2>s_1$ so that
\begin{equation}\label{decr1}
G_s(\kappa-\epsilon_1)\ge A - \delta
\end{equation}
for $0 \le s \le s_1$ and
\begin{equation}\label{decr2}
B+\delta < G_s(\kappa-\epsilon_1) < A-\delta
\end{equation}
for $s_1 < s \le s_2$.
But by (\ref{Gint}) and (\ref{intpos}), $\dot{G} > 0 $ for $s_1 < s < s_2$. 
This contradicts (\ref{decr1}) and
(\ref{decr2}) and so $A -\delta< G_s(\kappa-\epsilon_1)< x_1(s)$. 
This completes the proof of Lemma (\ref{interval}).
\end{proof}

\noindent
{\bf Remark}.
There is no uniform upper bound on $x_2$. The expansion of $G_s$
about $\infty$  is given by
\begin{equation*}
G_s(z)=e^{s/2}z +\frac{2se^{s/2}}{z} + {\rm O}(\frac{1}{z^2}).
\end{equation*}
Thus
\begin{equation*}
x_2-x_1=|I_s|=4 C(I_s)=4e^{s/2}C(\G[0,s]\cup \G[0,s]^\R),
\end{equation*}
where $C(E)$ denotes the logarithmic capacity of $E$
and $R(\G[0,s])$ is the reflection of $\G[0,s]$ about $\R$.
Thus the length of $I_s$ is finite, but it tends to $\infty$
as $s\to \infty$. 

In particular each $F_s$ is analytic on the ball $\{z:|z-B|<\frac{A-B}{2})$ 
for $\epsilon_1$ sufficiently small.

\bigskip
To simplify the notation somewhat, we define
\be\label{pullbackdef}
\Gamma_{u,v}=G_u(\G[u,v])\quad\text{ and }\quad
\Gamma_{u,v}(s)=G_u(\G(u+s)),
\ee
for $0 \le s \le v-u$.  Then $\G_{u,v}(0)=G_u(\Gamma(u))=\s(u)$ 
and $\G_{u,v}(v-u)=G_u(\G(v))$.

\bigskip

\begin{lemma}\label{l:hyperbolic}
If (\ref{glim}) holds and if there is $C<4$ so that 
$\l$ has local Lip 1/2 norm less than $C$,
then given $\eps>0$ and $0 < u_0<\infty$, 
there is an $n_0<\infty$ so that
for $n\ge n_0$
\be\label{hyper}
\rho_{\H}(\Gamma_{(n-1)u_0,(n+1)u_0}(s),\Gamma^\kappa(s)) < \eps
\ee
for all $u_0 \le s \le 2u_0$,
where $\rho_{\H}$ is the hyperbolic distance in the upper half-plane $\H$.
\end{lemma}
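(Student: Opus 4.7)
The strategy is to reduce the claim to an application of Theorem~\ref{t:uniform} by working with the renormalized curves, and then convert the resulting uniform Euclidean closeness into hyperbolic closeness by exploiting that $\Gamma^\kappa$ stays in the interior of $\H$ on $[u_0,2u_0]$.

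First, by Lemma~\ref{l:semigroup}, $\Gamma_{(n-1)u_0,(n+1)u_0}$ is the trace driven by $\sigma_{(n-1)u_0}$ restricted to $[0,2u_0]$. Setting $T_n=1-e^{-(n-1)u_0}$ and undoing the time change, this is the same as the half-plane capacity renormalization $\gamma_{T_n}$ of Section~\ref{s:scaling}, with driving term $\lambda_{T_n}(t)=\lambda(T_n+t(1-T_n))/\sqrt{1-T_n}$, and
\begin{equation*}
\Gamma_{(n-1)u_0,(n+1)u_0}(s)=\gamma_{T_n}(1-e^{-s}),\qquad s\in[0,2u_0].
\end{equation*}
Similarly $\Gamma^\kappa(s)=\gamma^\kappa(1-e^{-s})$, where $\gamma^\kappa$ is driven by $\kappa\sqrt{1-t}$. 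The hypothesis $\lim_{t\to 1}\lambda(t)/\sqrt{1-t}=\kappa$ is equivalent to $\sigma(s)\to\kappa$ as $s\to\infty$, so $\sigma_{(n-1)u_0}\to\kappa$ uniformly on $[0,2u_0]$, equivalently $\lambda_{T_n}(t)\to\kappa\sqrt{1-t}$ uniformly on $[0,1-e^{-2u_0}]$ as $n\to\infty$.

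Next, I would verify the local Lip~$1/2$ hypothesis of Theorem~\ref{t:uniform} with a constant strictly less than $4$ uniformly in $n$. A direct change of variables shows that $\lambda_{T_n}$ inherits the local Lip~$1/2$ bound of $\lambda$ with the same constant $C<4$: if $|t-t'|<\delta(1-t)$, then $|\lambda_{T_n}(t)-\lambda_{T_n}(t')|\le C|t-t'|^{1/2}$. On $[0,1-e^{-2u_0}]$, where $1-t\ge e^{-2u_0}$, this bound holds for all $|t-t'|<\delta e^{-2u_0}$, uniformly in $n$. For the target $\kappa\sqrt{1-t}$, the mean value theorem gives a Lipschitz bound $\kappa e^{u_0}/2$ on the same interval, so its Lip~$1/2$ norm on subintervals of length less than $D'$ is at most $(\kappa e^{u_0}/2)\sqrt{D'}$, which becomes strictly less than $4$ once $D'=D'(\kappa,u_0)$ is chosen small enough. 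Setting $D=\min(\delta e^{-2u_0},D')$ and rescaling $[0,1-e^{-2u_0}]$ onto $[0,1]$ (Lip~$1/2$ constants are invariant under this scaling), Theorem~\ref{t:uniform} supplies, for each $\epsilon'>0$, some $\delta'>0$ such that $\|\lambda_{T_n}-\kappa\sqrt{1-\cdot}\|_\infty<\delta'$ implies
\begin{equation*}
\sup_{t\in[0,1-e^{-2u_0}]}|\gamma_{T_n}(t)-\gamma^\kappa(t)|<\epsilon'.
\end{equation*}
Combined with the uniform convergence established above, this holds for all $n\ge n_0(\delta')$.

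Finally, since $\gamma^\kappa$ is a Jordan arc meeting $\R$ only at its endpoints $t=0$ and $t=1$ (Proposition~\ref{p:slit}), the set $\{\Gamma^\kappa(s):s\in[u_0,2u_0]\}$ is a compact subset of $\H$ with $\eta:=\inf_{s\in[u_0,2u_0]}\Im\Gamma^\kappa(s)>0$ depending only on $\kappa$ and $u_0$. Choosing $\epsilon'<\eta/2$ keeps $\Gamma_{(n-1)u_0,(n+1)u_0}(s)$ in a common compact subset of $\H$, on which Euclidean and hyperbolic metrics are bi-Lipschitz equivalent, so $\rho_\H\le(2/\eta)\,\epsilon'$, which is less than $\epsilon$ for $\epsilon'$ small enough. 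The main obstacle is the Lip~$1/2$ check with constant strictly less than $4$ for the target $\kappa\sqrt{1-t}$: its global Lip~$1/2$ norm equals $\kappa>4$, and the argument works only because we restrict to $t\in[0,1-e^{-2u_0}]$, which stays safely away from the singular endpoint $t=1$.
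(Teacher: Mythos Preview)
Your proof is correct and follows essentially the same approach as the paper's: identify $\Gamma_{(n-1)u_0,(n+1)u_0}$ with the renormalization $\gamma_{T_n}$, use $\sigma(s)\to\kappa$ to get $\lambda_{T_n}\to\kappa\sqrt{1-t}$ uniformly on a compact subinterval, and invoke Theorem~\ref{t:uniform}. You have spelled out two details the paper leaves implicit---the verification that $\kappa\sqrt{1-t}$ also satisfies the local Lip~$1/2$ bound with constant $<4$ on $[0,1-e^{-2u_0}]$ (which is genuinely needed, since its global Lip~$1/2$ norm is $\kappa>4$), and the passage from Euclidean to hyperbolic closeness via compactness of $\Gamma^\kappa[u_0,2u_0]\subset\H$.
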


\begin{proof} 

By (\ref{renorml}) and (\ref{glim}), $\l_T$ converges to
$\l^\kappa(t)=\kappa\sqrt{1-t}$ uniformly on $[0,1]$. Since the local Lip 1/2 norm is 
less than $C<4$, $\l_T$ satisfies the hypotheses of Theorem
\ref{t:uniform} on $[0,t_0]$ for each $t_0 < 1$. By Theorem 4.1 this
implies uniform convergence of $\g_T[0,t_0]$ to $\g[0,t_0]$ for each
$t_0 < 1$, as $T\to 1$.  
Since $u_0$ is fixed and $\Gamma$ is a reparametrization
of $\g$, the lemma follows. 
\end{proof}

\bigskip
\begin{lemma}\label{l:cones} 
For $u>0$, let $S_n$ be the line segment from $\Gamma(nu)$ to
$\Gamma((n+1)u)$.  Given $\epsilon>0$, there is $n_0< \infty $ and
$u<\infty$ so that for $n\ge n_0$  
\begin{equation}\label{segarg}
\bigl|\arg\bigl(\Gamma(nu)-\Gamma((n+1)u)\bigl)-\pi (1-\theta)\bigl| <
\epsilon,
\end{equation}
\begin{equation}\label{decay}
{\Im}{\Gamma((n+1)u)}\le \frac{1}{2} {\Im}{\Gamma(nu)},
\end{equation}
and
\begin{equation}\label{seghyp}
\rho_{\mathbb{H}}(\Gamma(s),S_n)\le \epsilon
\end{equation}
whenever $nu\le s \le (n+1)u$, where $\rho_{\mathbb{H}}$ is the hyperbolic
distance in $\mathbb{H}$.
\end{lemma}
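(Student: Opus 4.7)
The strategy is to compare the curve $\Gamma[nu,(n+1)u]$ to a scaled piece of the self-similar trace $\Gamma^\kappa$ via the renormalization machinery, exploiting that $F^\sigma_{(n-1)u}$ is well-behaved near the attracting endpoint $B$ of $\Gamma^\kappa$.

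\textbf{Step 1 (asymptotic shape of $\Gamma^\kappa$ and pullback).} The map $k(z)=ce^{i\pi\theta}(z-A)^{1-\theta}/(z-B)$ of Proposition~\ref{p:slit} has a simple pole at $B$, so $k^{-1}(w)=B-c_0 w^{-1}+O(w^{-2})$ as $w\to\infty$. Combined with $\Gamma^\kappa(s)=k^{-1}(e^{s\theta/2}e^{i\pi\theta})$, this yields
\[
\Gamma^\kappa(s)=B+c_1 e^{-s\theta/2}e^{i\pi(1-\theta)}\bigl(1+o(1)\bigr)\qquad\text{as }s\to\infty
\]
for some $c_1>0$. Fixing $u$ large we obtain: (i) $\abs{\arg(\Gamma^\kappa(u)-\Gamma^\kappa(2u))-\pi(1-\theta)}<\eps/3$; (ii) $\Im\Gamma^\kappa(2u)\le\tfrac14\Im\Gamma^\kappa(u)$; and (iii) $\Gamma^\kappa[u,2u]$ lies within $\rho_\H$-distance $\eps/3$ of its Euclidean chord $S^\kappa$. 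Setting $\tau=(n-1)u$, $\wt\sigma=\sigma_\tau$, and $\wt\Gamma(s)=G^\sigma_\tau(\Gamma(\tau+s))$, Lemma~\ref{l:semigroup} says $\wt\Gamma$ is driven by $\wt\sigma$; hypothesis (\ref{glim}) gives $\wt\sigma\to\kappa$ uniformly on compacts and the Lip $1/2$ bound transfers, so Lemma~\ref{l:hyperbolic} yields $n_0$ with $\rho_\H(\wt\Gamma(s),\Gamma^\kappa(s))<\eps/3$ for $s\in[u,2u]$ and $n\ge n_0$. Combined with (iii) this gives $\rho_\H(\wt\Gamma(s),S^\kappa)<2\eps/3$.

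\textbf{Step 2 (controlling $F^\sigma_\tau$ near $B$, the main obstacle).} Increasing $n_0$, we may assume $\norm{\wt\sigma-\kappa}_\infty$ on $[0,\infty)$ is as small as desired, so Lemma~\ref{interval} applied to $\wt\sigma$ gives $I^{\wt\sigma}_s\subset(A-\delta,\infty)$ uniformly in $s$; hence $F^{\wt\sigma}_s$ is holomorphic on a fixed disk $D$ about $B$ and contracts it by a factor $\asymp e^{-s\theta/2}$. Using the semigroup decomposition $F^\sigma_\tau=F^\sigma_{n_0 u}\circ F^{\sigma_{n_0 u}}_{\tau-n_0 u}$, the strongly contracting inner map shrinks $D$ into a tiny region near $F^{\sigma_{n_0 u}}_{\tau-n_0 u}(B)$, which by stability remains close to $B$; for $n$ large this region lies in the domain of analyticity of the $n$-independent outer map $F^\sigma_{n_0 u}$, yielding a fixed neighborhood of $B$ on which $F^\sigma_\tau$ is conformal. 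Schwarz reflection (real interval to real interval) together with orientation preservation then force $(F^\sigma_\tau)'(B)\in\R_{>0}$. This is the crux of the argument, because for general $\sigma$ one has no a priori control over whether $B\in I^\sigma_\tau$, and the splitting at the fixed threshold $n_0$ is what isolates the well-behaved tail (governed by Lemma~\ref{interval}) from a harmless finite prefix.

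\textbf{Step 3 (conclusion).} For $s\in[u,2u]$, $\wt\Gamma(s)$ lies in a Euclidean disk of radius $O(e^{-u\theta/2})$ about $B$, so the first-order expansion
\[
F^\sigma_\tau(z)=F^\sigma_\tau(B)+(F^\sigma_\tau)'(B)(z-B)+O(|z-B|^2)
\]
applies on $\wt\Gamma[u,2u]$ and on $S^\kappa$; since $(F^\sigma_\tau)'(B)\in\R_{>0}$, this affine map preserves arguments of differences and ratios of imaginary parts, so Steps~1--2 yield (\ref{segarg}) and (\ref{decay}). For (\ref{seghyp}), conformal invariance of hyperbolic distance under $F^\sigma_\tau:\H\to\Omega=\H\setminus\Gamma[0,\tau]$ transports $\rho_\H(\wt\Gamma,S^\kappa)<2\eps/3$ to $\rho_\Omega(\Gamma[nu,(n+1)u],F^\sigma_\tau(S^\kappa))<2\eps/3$; the monotonicity $\rho_\H\le\rho_\Omega$ yields the same bound in $\rho_\H$, and the near-affine expansion above makes $F^\sigma_\tau(S^\kappa)$ itself $\rho_\H$-close to the Euclidean chord $S_n$, completing the proof after a triangle inequality.
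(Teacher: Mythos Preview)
Your overall strategy---pull $\Gamma[nu,(n+1)u]$ back by $G^\sigma_{(n-1)u}$, compare to $\Gamma^\kappa[u,2u]$ via Lemma~\ref{l:hyperbolic}, then push forward through $F^\sigma_{(n-1)u}$ using distortion on a disk about $B$---matches the paper's, and Steps~1 and~3 are essentially right. The problem is Step~2.

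You correctly flag that under (\ref{glim}) alone one cannot assert $B\notin I^\sigma_\tau$, but your semigroup splitting does not repair this. Lemma~\ref{interval} does control the inner factor $F^{\sigma_{n_0 u}}_{\tau-n_0 u}$ on a disk $D$ about $B$, and the iterated-contraction argument you sketch does keep $F^{\sigma_{n_0 u}}_{s}(B)$ close to $B$. But the outer factor $F^\sigma_{n_0 u}$ is conformal only on $\C\setminus I^\sigma_{n_0 u}$, and that interval depends solely on $\sigma|_{[0,n_0 u]}$, about which nothing is assumed. Since $G_s=e^{s/2}g_{t(s)}$, one has $|I^\sigma_{n_0 u}|\asymp e^{n_0 u/2}\diam\gamma[0,1-e^{-n_0 u}]$, which is typically large and can certainly cover $B$; equivalently, the left endpoint of $I^\sigma_\tau$ equals $G^{\sigma_{n_0 u}}_{\tau-n_0 u}$ applied to the left endpoint of $I^\sigma_{n_0 u}$, and this image has no lower bound. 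So the claim that $F^{\sigma_{n_0 u}}_{\tau-n_0 u}(D)$ lands in the domain of $F^\sigma_{n_0 u}$ is unjustified, and with it the Taylor expansion of $F^\sigma_\tau$ at $B$ and the assertion $(F^\sigma_\tau)'(B)\in\R_{>0}$ on which Step~3 rests.

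The paper avoids the issue rather than confronting it. Since the angular conclusion of Theorem~\ref{t:collide} is stated only for $\gamma_T$ with $1-T$ small, one may at the outset replace $\sigma$ by a late shift and simply assume $\|\sigma-\kappa\|_\infty<\epsilon_1$. Then Lemma~\ref{interval} applies to $\sigma$ itself, giving $I^\sigma_s\subset(A-\delta,\infty)$ for \emph{every} $s\ge 0$, so each $F_{(n-1)u}$ is univalent on the fixed disk $\{|z-B|<(A-B)/2\}$ with no splitting needed. The paper then applies the classical argument-distortion and growth theorems to the normalized map $h(z)=F_{(n-1)u}(z+B)-F_{(n-1)u}(B)$, which is exactly the input your Step~3 requires.
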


Assuming Lemma \ref{l:cones} for the moment, we continue with the
proof of Theorem \ref{t:collide}.
Set
\begin{equation*}
C_{\epsilon}=\{z\in \mathbb{H}: |\arg{z}-\pi (1-a)|<\epsilon
\end{equation*}
and
\begin{equation*}
I_n=\mathbb{R}\cap[\Gamma(nu)-C_\epsilon].
\end{equation*}

\begin{figure}[h]
\vskip 0.3truein
\centering
\psfrag{piam}{$\pi \theta-\epsilon$}
\psfrag{piap}{$\pi(1-\theta)+\epsilon$}
\psfrag{gn}{$\Gamma(nu)$}
\psfrag{gnp}{$\Gamma((n+1)u)$}
\psfrag{Inp}{$I_{n+1}$}
\centerline{\includegraphics[height=1.5in]{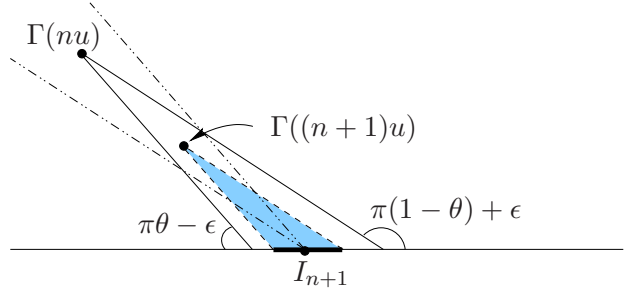}}
\nobreak
\caption{Cones with parallel sides
}\label{conefig}
\vskip 0.3truein
\end{figure}

By (\ref{segarg}),  $\Gamma((n+1)u)\in \Gamma(nu)-C_\epsilon$ and 
since the cones $\Gamma(nu)-C_\epsilon$ and
$\Gamma((n+1)u)-C_\epsilon$ have parallel sides, we conclude $I_{n+1}\subset
I_n$. See Figure \ref{conefig}.
By (\ref{decay}), ${\Im} \Gamma(nu) \to 0$ and hence $|I_n|\to 0$.
Set
\begin{equation*}
x_{\infty}=\bigcap I_n.
\end{equation*}
Note $x \in I_n$ if and only if $\Gamma(nu) \in x+C_\epsilon$. 
Thus $\Gamma(nu)\in x_{\infty} +C_{\epsilon}$ for all $n\ge n_0$.
By (\ref{seghyp}), 
\begin{equation*}
\{\Gamma(s): s >n_0 u\}\subset x_\infty+C_{M\epsilon},
\end{equation*}
where $M$ is a universal constant.
Letting $\epsilon \to \infty$, we obtain the Theorem.
\end{proof}

\vskip 0.5 truein

\begin{proof}[Proof of Lemma \ref{l:cones}]
To prove the lemma, we first verify that it holds for $\Gamma^\kappa$.
As before $F^\kappa_s$ is the inverse of $G_s=G_s^\kappa$.
By (\ref{kdef}) and (\ref{gdef1})
\bes
k\circ F^\kappa_s=e^{\theta s/2}k(z)
\ees
and hence
\bes
\frac{(F^\kappa_s(z)-A)^{1-\theta}}{F^\kappa_s(z)-B}=e^{\theta s/2}
\frac{(z-A)^{1-\theta}}{z-B}.
\ees
Since $F^\kappa_s(\kappa)=\Gamma^\kappa(s)$ we have that
\be\label{asym}
\Gamma^\kappa(s)-B=e^{-\theta s/2}
(\kappa-B)\biggl(\frac{\Gamma^\kappa(s)-A}{\kappa-A}\biggl)^{1-\theta}
\ee
Since $\kappa=A+B$, and $\G^\kappa(s) \to B$ we have 
\bes
\lim_{n\to
\infty}\frac{\G^\kappa(nu)-\G^\kappa((n+1)u)}{e^{-nu\theta/2}}=A(1-e^{-u\theta/2})
(1-A/B)^{1-\theta}.
\ees
Since $A>B$, (\ref{segarg}) holds for $n$ sufficiently large.
Also (\ref{seghyp}) follows from (\ref{asym}).
Choose $u$ so large that $e^{-\theta u/2}< \frac{1}{2}$ and then (\ref{decay})
follows from (\ref{asym}) with $s=nu$. 
We also note that by (\ref{asym}) 
\begin{equation}\label{Kangle}
|\arg(\Gamma^\kappa(s)-B) - \pi(1-\theta)| < \epsilon
\end{equation}
for $s\ge u$ if $u$ is sufficiently large.

To prove the lemma for $\Gamma$, given $\epsilon > 0$, 
by Lemma \ref{l:hyperbolic} we can choose $n_1$ so large that if $n\ge n_1$ and
$u \le s \le 2u$ then 
\begin{equation}\label{hyp}
\rho_{\mathbb H}(\Gamma_{(n-1)u,(n+1)u}(s),\Gamma^\kappa(s))<\epsilon^2
\end{equation}
and by (\ref{Kangle}) 
\begin{equation}{\label{garg}}
|\arg (\Gamma_{(n-1)u,(n+1)u}(s)-B) - \pi (1-\theta)| < 2\epsilon.
\end{equation}
Since $\Gamma^\kappa(s) \to B$ as $s\to \infty$
by (\ref{asym}),
we can also choose $u$ so large that
\begin{equation}{\label{small}}
|\Gamma_{(n-1)u,(n+1)u}(s)-B| < \epsilon,
\end{equation}
for $u \le s \le 2u$ and $n \ge n_1$, by Lemma \ref{l:hyperbolic}
again. 

Recall that by definition 
\begin{equation}{\label{gdef}}
\Gamma((n-1)u +s)=F_{(n-1)u}(\Gamma_{(n-1)u,(n+1)u}(s))
\end{equation}
for $0 \le s \le 2u$. 
Set
\begin{equation*}
h=F_{(n-1)u}(z+B) - F_{(n-1)u}(B). 
\end{equation*}
Then $h_1(z)=h((\frac{A-B}{2})z)$ is univalent on the unit disk
$\mathbb{D}$ by Lemma \ref{interval} and $h(0)=0$. By  (\ref{small}), (\ref{garg}), (\ref{gdef}),
and Theorem 3.5, \cite[page 95]{Du},
applied to $h_1/h_1^\prime(0)$, we conclude that
\begin{equation}\label{garg2}
|\arg (\Gamma(s) - F_{(n-1)u}(B))-\pi (1-\theta)| < 3\epsilon
\end{equation}
for $nu \le s \le (n+1)u$.
By (\ref{asym}) and (\ref{hyp}),
\begin{equation}{\label{dist}}
\biggl|\frac{\Gamma_{(n-1)u,(n+1)u}(2u)-B}
{\Gamma_{(n-1)u,(n+1)u}(u)-B}\biggl|<\frac{1}{2}.
\end{equation}
By the upper and lower estimates in the growth
theorem \cite[Theorem I.4.5]{GM}, 
\begin{equation}\label{growth}
\biggl|\frac{\Gamma((n-1)u+s_1)-F_{(n-1)u}(B)}
{\Gamma((n-1)u+s_2)-F_{(n-1)u}(B)}\biggl|
\le
(1+\epsilon)
\biggl|\frac{\Gamma_{(n-1)u,(n+1)u}(s_1)-B}
{\Gamma_{(n-1)u,(n+1)u}(s_2)-B}\biggl|,
\end{equation}
for $u \le s_1,s_2 \le 2u$.
By (\ref{garg2}),
(\ref{dist}), and (\ref{growth}) with $s_1=2u$ and $s_2=u$
we obtain (\ref{decay}) and then (\ref{segarg}) for $\Gamma$.
By (\ref{decay}), (\ref{garg2}), and (\ref{growth}) 
we have that (\ref{seghyp}) holds 
for $\Gamma$. 
\end{proof}

\vskip 0.5 truein

\begin{proof}[Proof of Theorem \ref{t:sp}]
As before, fix 
$u_0$ large and write
$$\g [0,1)= \bigcup_{n=1}^{\infty} \G_n(s)$$
where $\G_n= \G_n(\s) = \G[(n-1) u_0, n u_0].$
Since $\g$ is continuous on $[0,1)$ by the assumption $C<4$ and \cite{L}, 
we only need to show that $\diam \g[t,1)\to0$
as $t\to1$ in order to prove continuity of $\g$ on $[0,1].$
To do this, it suffices to show that $\diam \G_n$ decays exponentially.
Notice that
$$\G_n = F_{(n-2)u_0} (G_{(n-2) u_0}(\G_n (\s))),$$
and write $F_{(n-2)u_0}$ as a composition
$$F_{(n-2)u_0}=f_1\circ f_2\circ\cdots\circ f_{n-2},$$
where each $f_j$ corresponds to the driving term $\s$ restricted to $[(j-1) u_0, j u_0].$
By choosing $u_0$ large enough, we may assume that all $f_j$ except perhaps $f_1$ are
arbitrarily close to $F_{u_0}^\kappa$ (driven by the constant
$\s^\kappa\equiv \kappa$).
 
Writing $G=G^\kappa,$  (\ref{gdef2}) implies 
$$G_s'(\beta) = |\frac1r| = |e^{s(\cos\theta)e^{i\theta}}| >1$$
so that 
$$|{F_{u_0}^\kappa}'(\beta)| <1.$$
Choosing $u_0$ large,
Hurwitz' theorem implies that all $f_j$ ($j\geq2$) have a fixpoint $\beta_j$ near the fixpoint
$\beta$ of $F_{u_0}^\kappa$, and we may assume that the derivatives $f_j'(\beta_j)$ are arbitrarily
close to ${F_{u_0}^\kappa}'(\beta)$, hence uniformly bounded away from $1$ in absolute value.
As all $f_j$ are conformal maps of $\H,$ the Koebe distortion theorem (or normality) implies
the existence of a disc $D$ centered at $\beta$ and a constant $c<1$ such that
$f_j(D)\subset D$ and $|f_j'(z)|\leq c$ for all $z\in D$ and all $j\geq2$. Consequently,
$$\diam F_{(n-2)u_0}(D) \leq c^{n-2}.$$
Since for $u_0$ large enough, 
$$G_{(n-2) u_0}(\G_n(\s))=\Gamma_2(\sigma_{(n-2)u_0})\subset D$$
for all $n$, 
it follows that
$$\G_n = F_{(n-2)u_0}(\Gamma_2(\sigma_{(n-2)u_0})) \subset F_{(n-2)u_0}(D)$$ 
and the exponential decay of $\diam \Gamma_n$ follows at once.
By Theorem \ref{t:uniform}, $\Gamma_2(\sigma_{(n-2)u_0})$ converges to $\Gamma_2(\sigma^{\kappa})$
as $n\to\infty$. Because $\Gamma(\sigma^{\kappa})$ near $\beta$ is asymptotically similar to the
logarithmic spiral by Section \ref{s:spirals} and Koebe distortion (applied to $k^{-1}$ near $0$), 
$\Gamma_2(\sigma^{\kappa})$ 
rescaled (by a linear map) to have diameter $1$ converges to (a portion of) the logarithmic spiral
as $u_0\to\infty.$ Again by Koebe distortion (applied to $F_{(n-2)u_0}$) it follows that
$\Gamma_n$ rescaled to have diameter $1$ converges to the spiral, and the theorem follows.
\end{proof}

\vskip 0.5 truein
\begin{proof}[Proof of Theorem \ref{t:continuity}]
Let $\g$ be any of the spirals constructed in Section \ref{s:spiralexamples}
and let $\l$ be its driving term.
By Theorem \ref{t:spiral driving} and Proposition \ref{p:sufficient}, 
$r\l$
satisfies the assumptions of Theorems \ref{t:collide} and \ref{t:sp}
for $r>1$ and $r<1$ respectively, and Theorem \ref{t:continuity}
follows at once. 
\end{proof}

\end{doublespace}

\begin{thebibliography}{ABC}

\bibitem[AIM]{AIM} K. Astala, T. Iwaniecz, G. Martin, Elliptic Partial Differential Equations and
Quasiconformal Mappings in the Plane, to appear.

\bibitem[D]{DHp} P. Duren, Theory of $H^p$ Spaces, Academic Press, New
York 1970.

\bibitem[D2]{Du} P. Duren, Univalent Functions, Springer-Verlag, New
York 1983.

\bibitem[GM]{GM} J. Garnett and D.E. Marshall, Harmonic Measure, Cambridge
University Press, New York, 2005.

\bibitem[JK]{JK} D. Jerison, C. Kenig, 
Hardy spaces, $A\sb{\infty }$, and singular integrals on chord-arc domains,
{\it Math. Scand. \bf 50} (1982), no. 2, 221--247.

\bibitem[KNK]{KNK} W. Kager, B. Nienhuis, L. Kadanoff, 
Exact solutions for Loewner evolutions,
{\it J. Stat. Phys. \bf 115} (2004), 805--822.
 
\bibitem[K]{K} P.P. Kufarev, A remark on integrals of Loewner's
equation, {\it Doklady Akad. Nauk SSSR 57} (1947), 655--656, in Russian.

\bibitem[La]{La} G. Lawler, Conformally Invariant Processes in the Plane,
{\it Mathematical Surveys and Monographs, \bf 114}. 
American Mathematical Society, Providence, RI, 2005.

\bibitem[Li]{L} J. Lind, A sharp condition for the Loewner equation to generate slits,
{\it Ann. Acad. Sci. Fenn. Math. \bf 30} (2005), 143--158.

\bibitem[MR1]{MR1} D.E. Marshall and S. Rohde, Convergence of a variant of the 
Zipper algorithm for conformal mapping, SIAM J. Numer. Anal. 45(2007), 
2577-2609. 

\bibitem[MR2]{MR2} D.E. Marshall, S. Rohde, The Loewner differential equation and slit mappings,
{\it J. Amer. Math. Soc. \bf 18} (2005),  763--778. 

\bibitem[P]{P} C. Pommerenke, Boundary behaviour of conformal maps, 
{\it Springer Verlag} (1992)

\bibitem[PV]{PV} D. Prokhorov and A. Vasil'ev, Singular and tangent
slit solutions to the L\"owner equation, arXiv:0708.1048v2 [mathCV] 23
Jun2008.

\bibitem[RS]{RS} S. Rohde and O. Schramm, Basic properties of SLE,
{\it Ann. Math. \bf 161} (2005), 879–-920.

\bibitem[W]{W} C. Wong, Ph.D. Thesis, in preparation.

%\bibitem[W]{W} S. Warschawski, On the Degree of Variation in Conformal Mapping 
%of Variable Regions,
%Trans. AMS 69 No. 2 (1950), pp. 335--356.


\end{thebibliography}
\end{document}